\numberwithin{equation}{section}
\theoremstyle{plain}
\newtheorem{prop}{Proposition}[section]
\newtheorem{theo}[prop]{Theorem}
\newtheorem{coro}[prop]{Corollary}
\newtheorem{lemm}[prop]{Lemma}
\theoremstyle{definition}
\newtheorem{defi}[prop]{Definition}
\newtheorem{conj}[prop]{Conjecture}
\newtheorem{rema}[prop]{Remark}
\newtheorem{exam}[prop]{Example}
\newtheorem{obse}[prop]{Observation}
\def\ra{\rightarrow}
\def\bR{{\mathbb R}}
\def\Eff{\overline{\mathrm{Eff}}}
\def\Pic{\mathrm{Pic}}
\def\Mor{\mathrm{Mor}}
\def\Nef{\mathrm{Nef}}
\def\Pic{\mathrm{Pic}}
\begin{document}

\title[Geometric Manin's Conjecture]{Geometric Manin's Conjecture and rational curves}

\author{Brian Lehmann}
\email{lehmannb@bc.edu}
\address{Department of Mathematics, Boston College, 1400 Commonwealth Ave, Chestnut Hill, MA, 02467}

\author{Sho Tanimoto}
\email{stanimoto@kumamoto-u.ac.jp}
\address{Department of Mathematics, Faculty of Science, Kumamoto University, Kurokami 2-39-1 Kumamoto 860-8555 Japan; 
Priority Organization for Innovation and Excellence, Kumamoto University}

\classification{14H10}

\keywords{rational curves, Manin's Conjecture, Fujita invariant}

\thanks{Lehmann is supported by NSF grant 1600875.  Tanimoto is partially supported by Lars Hesselholt's Niels Bohr professorship, and MEXT Japan, Leading Initiative for Excellent Young Researchers (LEADER).}

\begin{abstract}
Let $X$ be a smooth projective Fano variety over the complex numbers.  We study the moduli space of rational curves on $X$ using the perspective of Manin's Conjecture. In particular, we bound the dimension and number of components of spaces of rational curves on $X$.  We propose a Geometric Manin's Conjecture predicting the growth rate of a counting function associated to the irreducible components of these moduli spaces.
\end{abstract}

\maketitle

\section{Introduction} 
\label{secct:intro}

A Fano variety over $\mathbb{C}$ carries many rational curves due to the positivity of the anticanonical bundle (\cite{Mori84}, \cite{KMM92}, \cite{Campana92}).  The precise relationship between curvature and the existence of rational curves is quantified by Manin's Conjecture.  For an ample divisor $L$ on a Fano variety $X$, the constants $a(X,L)$ and $b(X,L)$ of \cite{BM} compare the positivity of $K_{X}$ and $L$. 
Manin's Conjecture predicts that the asymptotic behavior of rational curves on $X$ as the $L$-degree increases is controlled by these geometric constants.  This point of view injects techniques from the minimal model program to the study of spaces of rational curves.

Batyrev gave a heuristic for Manin's Conjecture over finite fields that depends on three assumptions (see \cite{Tsc09} or \cite{Bou} for Batyrev's heuristic):
\begin{enumerate}
\item after removing curves that lie on a closed subset, moduli spaces of rational curves have the expected dimension;
\item the number of components of moduli spaces of rational curves whose class is a nef integral $1$-cycle is bounded above;
\item the \'etale cohomology of moduli spaces of rational curves enjoys certain homological stability properties. (The idea to use homological stability in Batyrev's heuristic is due to Ellenberg and Venkatesh, see, e.g., \cite{EV05}.)
\end{enumerate} 

In this paper, we investigate the plausibility of the first two assumptions for complex varieties.  We prove that the first assumption holds for any smooth Fano variety.  The second assumption fails in general: the number of components can grow polynomially as the degree of the $1$-cycle grows.  Thus we proceed in two different directions.  First, it is conjectured by Batyrev that there is a polynomial upper bound on the growth in number of components, and we make partial progress toward this conjecture.  Second, we explain how to modify the conjecture in order to discount the ``extra'' components and recover Batyrev's heuristic.  Our proposal can be seen as a geometric analogue of Peyre's thin set version of Manin's Conjecture.

\subsection{Moduli of rational curves}

Let us discuss the contents of our paper in more detail.
Let $X$ be a smooth projective uniruled variety and let $\Eff^{1}(X)$ denote the pseudo-effective cone of divisors.  Suppose $L$ is a nef $\mathbb{Q}$-Cartier divisor on $X$.  When $L$ is big, define the Fujita invariant (which we will also call the $a$-invariant) by
\begin{equation*}
a(X,L) := \min \{ t\in \bR \mid t[L] + [K_X] \in \Eff^{1}(X) \}.
\end{equation*}
When $L$ is not big, we formally set $a(X,L) = \infty$.  When $X$ is singular, we define the $a$-invariant by pulling $L$ back to a resolution of $X$.

\begin{theo} \label{intro:expecteddim}
Let $X$ be a smooth projective weak Fano variety and set $L = -K_{X}$.  Let $V \subsetneq X$ be the proper closed subset which is the Zariski closure of all subvarieties $Y$ such that $a(Y,L|_{Y}) > a(X,L)$.  Then any component of $\mathrm{Mor}(\mathbb{P}^{1},X)$ parametrizing a curve not contained in $V$ will have the expected dimension and will parametrize a dominant family of curves.
\end{theo}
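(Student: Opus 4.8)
The plan is to establish both conclusions at once by a contradiction argument: if a component $M$ parametrizes a non-dominant family of curves meeting the complement of $V$, then the $a$-invariant forces $\dim M$ to lie strictly below the expected dimension, contradicting the deformation-theoretic lower bound.

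First I would record the standard inputs. As $X$ is weak Fano, $-K_X$ is nef and big, so $a(X,-K_X)=1$. For any $[f]\in\mathrm{Mor}(\mathbb{P}^1,X)$ with class $\beta=f_*[\mathbb{P}^1]$, the standard deformation theory of morphisms gives $\dim_{[f]}\mathrm{Mor}(\mathbb{P}^1,X)\ge\chi(\mathbb{P}^1,f^*T_X)=-K_X\cdot\beta+\dim X$, the expected dimension, with equality whenever $H^1(\mathbb{P}^1,f^*T_X)=0$, in particular whenever $f$ is free. Working over $\mathbb{C}$, generic smoothness of the evaluation map shows that a dominant component of $\mathrm{Mor}(\mathbb{P}^1,X)$ has a free general member and hence exactly the expected dimension. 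So it suffices to prove that a component $M$ parametrizing a curve $C$ with $C\not\subset V$ is dominant.

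Assume not, and let $Y\subsetneq X$ be the closure of the locus swept out by the curves of $M$. Then $C\subseteq Y$, so $Y\not\subset V$, and the definition of $V$ forces $a(Y,L|_Y)\le a(X,L)=1$; in particular $L|_Y$ is big. Fix a resolution $\pi\colon\tilde Y\to Y$. A general curve of $M$ is not contained in $\mathrm{Sing}(Y)$, so taking strict transforms defines an injective rational map $M\dashrightarrow\mathrm{Mor}(\mathbb{P}^1,\tilde Y)$ whose image lies in a single component $\tilde M$; one checks that $\tilde M$ dominates $\tilde Y$ and that $\dim\tilde M\ge\dim M$. Since $\tilde Y$ is smooth and we are over $\mathbb{C}$, the general member $\tilde f$ of $\tilde M$ is free, so $\dim\tilde M=-K_{\tilde Y}\cdot\tilde\beta+\dim Y$, where $\tilde\beta$ denotes the (numerically well-defined) class of the curves of $\tilde M$. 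Because $\tilde M$ dominates $\tilde Y$, the class $\tilde\beta$ lies in the movable cone of curves, hence pairs nonnegatively with the pseudo-effective class $a(\tilde Y,\pi^*(L|_Y))\,\pi^*(L|_Y)+K_{\tilde Y}$; combined with $a(\tilde Y,\pi^*(L|_Y))=a(Y,L|_Y)\le1$, the nefness of $L$, and the projection formula $\pi^*(L|_Y)\cdot\tilde\beta=L\cdot\beta$, this gives $-K_{\tilde Y}\cdot\tilde\beta\le L\cdot\beta=-K_X\cdot\beta$. Therefore
\[
-K_X\cdot\beta+\dim X\ \le\ \dim M\ \le\ \dim\tilde M\ =\ -K_{\tilde Y}\cdot\tilde\beta+\dim Y\ \le\ -K_X\cdot\beta+\dim Y,
\]
forcing $\dim X\le\dim Y$, which contradicts $Y\subsetneq X$. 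Hence $M$ is dominant, and by the second paragraph it has the expected dimension.

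The step I expect to require the most care is the passage to the resolution: one must verify that a general member of $M$ genuinely lifts to $\tilde Y$ (it does, since the general curve avoids $\mathrm{Sing}(Y)$ on a dense open and $\mathbb{P}^1$ is a smooth complete curve), that the induced family on $\tilde Y$ dominates $\tilde Y$ with dimension at least $\dim M$, and that the $a$-invariant is unchanged — the last point being exactly the definition of $a(Y,L|_Y)$ used here, whose consistency rests on independence of the chosen resolution. The remaining ingredients — the deformation-theoretic dimension bound, freeness of the general member of a dominant family in characteristic zero, and the duality between the pseudo-effective cone of divisors and the movable cone of curves — are standard.
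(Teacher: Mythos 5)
Your proof is correct and follows essentially the same route as the paper's (Proposition 4.2 combined with the Hassett--Jiang closedness of $V$): reduce to showing dominance, pass to a resolution of the swept-out subvariety, and play the deformation-theoretic lower bound on $\dim M$ against the upper bound coming from freeness on the resolution, using pseudo-effectivity of $K_{\tilde Y}+a\,\pi^*(L|_Y)$ paired against the movable class $\tilde\beta$. The paper phrases this contrapositively (deriving $a(Y,L|_Y)>a(X,L)$ from excess dimension or non-dominance) but the inequalities are identical.
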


Assuming standard conjectures about rational curves, the converse implication is also true: a subvariety with higher $a$-value will contain families of rational curves with dimension higher than the expected dimension in $X$.  In this way the $a$-invariant should completely control the expected dimension of components of $\mathrm{Mor}(\mathbb{P}^{1},X)$.  Furthermore, an analogous statement holds for any uniruled $X$ and any big and nef $L$ provided we restrict our attention to curves with vanishing intersection against $K_{X} + a(X,L)L$. 

Theorem \ref{intro:expecteddim} is significant for two reasons.  The first is that $V$ is a proper subset of $X$; this is the main theorem of \cite{HJ16}.  The second is that Theorem \ref{intro:expecteddim} gives an explicit description of the closed set $V$.  In practice, one can use techniques from adjunction theory or the minimal model program to calculate $V$.

\begin{exam}
In Example \ref{exam:quartichypersurface} we show that if $X$ is any smooth quartic hypersurface of dimension $\geq 5$ then the exceptional set $V$ in Theorem~\ref{intro:expecteddim} is empty so that every component of $\mathrm{Mor}(\mathbb{P}^{1},X)$ has the expected dimension.  The same approach gives a quick proof of a result of \cite{CS09} showing an analogous property for cubic hypersurfaces.  Note that for a quartic hypersurface the components of the Kontsevich moduli space of stable maps need not have the expected dimension (see \cite{CS09}), so the method in \cite{CS09} does not apply to this case.
\end{exam}

\begin{exam}
Let $X$ be a smooth Fano threefold with index $2$ and Picard rank $1$.  By \cite[Proposition 6.5, 6.8, and 6.11]{LTT14}, the exceptional set $V$ in Theorem~\ref{intro:expecteddim} is empty so that every component of $\mathrm{Mor}(\mathbb{P}^{1},X)$ has the expected dimension and parametrizes a dominant family of curves.  
\end{exam}

The main outstanding question concerning $\mathrm{Mor}(\mathbb{P}^{1},X)$ is the number of components. Batyrev first conjectured that the number of components grows polynomially with the degree of the curve.  In fact we expect that the growth is controlled in a precise way by another invariant in Manin's Conjecture: the $b$-invariant (see Definition~\ref{defi: invariant b}).  We prove a polynomial growth bound for components satisfying an additional hypothesis: 

\begin{theo} \label{theo:polynomialbounds}
Let $X$ be a smooth projective uniruled variety and let $L$ be a big and nef $\mathbb{Q}$-Cartier divisor on $X$.  Fix a positive integer $q$ and let $\overline{M} \subset \overline{M}_{0,0}(X)$ denote the union of all components which contain a chain of free curves whose components have $L$-degree at most $q$.  There is a polynomial $P(d)$ which is an upper bound for the number of components of $\overline{M}$ of $L$-degree at most $d$.
\end{theo}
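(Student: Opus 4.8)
The plan is to show that every component of $\overline{M}$ of bounded $L$-degree is built by gluing a bounded number of ``building blocks'' taken from a fixed finite list, and then to count the possible gluings.

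First I would isolate the building blocks. The locus of $\overline{M}_{0,0}(X)$ parametrizing curves of $L$-degree at most $q$ is of finite type, so only finitely many components $M_{1},\dots,M_{N}$ of $\overline{M}_{0,0}(X)$ have a general member that is a free rational curve of $L$-degree at most $q$; write $\mathcal{F}_{q}$ for this set, and note that every free rational curve of $L$-degree $\le q$ lies on a member of $\mathcal{F}_{q}$ because freeness is an open condition. Since $L$ is big and nef, a free rational curve has positive $L$-degree (its deformations cover $X$, so its class cannot be contracted by the big divisor $L$); as $\mathcal{F}_{q}$ is finite, $\epsilon:=\min_{i}\deg_{L}(M_{i})>0$.

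Next I would set up the gluing operation. The essential input is that a stable map $f\colon C\to X$ whose domain is a chain of $\mathbb{P}^{1}$'s and whose restriction to each link $C_{i}$ is a free curve is an unobstructed point of $\overline{M}_{0,0}(X)$: each $(f^{*}T_{X})|_{C_{i}}$ is globally generated, so $H^{1}(C_{i},(f^{*}T_{X})|_{C_{i}})=0$, and since the dual graph of $C$ is a tree the evaluation map $\bigoplus_{i}H^{0}((f^{*}T_{X})|_{C_{i}})\to\bigoplus_{\mathrm{nodes}}(f^{*}T_{X})_{\mathrm{node}}$ in the normalization sequence is surjective; hence $H^{1}(C,f^{*}T_{X})=0$. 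Such an $f$ therefore lies on a unique component of $\overline{M}_{0,0}(X)$ and, being unobstructed, it deforms to an irreducible free curve on that same component. Now given components $A_{1},\dots,A_{k}$ of $\overline{M}_{0,0}(X)$ with free general member, the space $Z(A_{1},\dots,A_{k})$ of stable maps from a $k$-chain whose $i$-th link is a free member of $A_{i}$ (with arbitrary nodes) fibers over $\prod_{i}(\text{free locus of }A_{i})$ with irreducible fibers, hence is irreducible, so its image in $\overline{M}_{0,0}(X)$ lies on a single component, which I denote $A_{1}\star\cdots\star A_{k}$. Smoothing one node at a time identifies this component with $\bigl((A_{1}\star A_{2})\star A_{3}\bigr)\star\cdots\star A_{k}$, and the binary operation $\star$ is manifestly commutative (a two-link chain is symmetric), hence associative and commutative; so $A_{1}\star\cdots\star A_{k}$ depends only on the multiset $\{A_{1},\dots,A_{k}\}$, and its $L$-degree is $\sum_{i}\deg_{L}(A_{i})$.

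With this in place, let $M\subseteq\overline{M}$ be a component. By hypothesis $M$ contains a chain of free curves whose links $C_{1},\dots,C_{k}$ have $L$-degree at most $q$; each $C_{i}$ lies on some $M_{j(i)}\in\mathcal{F}_{q}$, and this chain, being a unique-component (unobstructed) point of $\overline{M}_{0,0}(X)$ that also belongs to $Z(M_{j(1)},\dots,M_{j(k)})$, forces $M=M_{j(1)}\star\cdots\star M_{j(k)}$. Thus $M$ is determined by a multiset of elements of $\mathcal{F}_{q}$, and if $\deg_{L}(M)\le d$ then $\sum_{i}\deg_{L}(M_{j(i)})\le d$, so $k\le d/\epsilon$. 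The number of multisets of size at most $\lfloor d/\epsilon\rfloor$ drawn from an $N$-element set is $\binom{\lfloor d/\epsilon\rfloor+N}{N}\le (d/\epsilon+N)^{N}/N!$, which is a polynomial in $d$ of degree $N$; taking $P(d)$ to be this polynomial completes the proof. The main obstacle I anticipate is the gluing step: establishing that chains of free curves are unobstructed and making precise the sense in which the component of a smoothed chain depends only on the multiset of its links. The arguments ``smooth one node at a time'' and ``the chain lies in the irreducible family $Z$'' must be handled carefully, since a boundary point of $\overline{M}_{0,0}(X)$ can a priori lie on several components — ruling this out is exactly the role of the $H^{1}$-vanishing computation. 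The final enumeration of multisets is elementary.
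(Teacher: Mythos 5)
There is a genuine gap, and it sits exactly where you flagged your ``main obstacle'': the claim that the space $Z(A_{1},\dots,A_{k})$ of chains is irreducible, so that the smoothed component depends only on the multiset $\{A_{1},\dots,A_{k}\}$. First, the stated justification is not valid: $Z(A_{1},\dots,A_{k})$ does not fiber over $\prod_{i}(\text{free locus of }A_{i})$ --- it is the fiber product $A_{1}'\times_{X}A_{2}''\times_{X}\cdots\times_{X}A_{k}'$ of universal families over $X$, the incidence conditions between consecutive links are nontrivial, and the fibers of the map to any single factor are fibers of evaluation maps, not of projections from a product. More importantly, the conclusion itself is false in general: the fiber product $A_{1}'\times_{X}A_{2}'$ can have several ``main'' components. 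This happens precisely when the evaluation map $A_{1}'\to X$ does not have connected fibers, i.e.\ when its Stein factorization $A_{1}'\to Y\to X$ has $\deg(Y/X)=e>1$; then the fiber of $A_{1}'\to X$ over a general point has $e$ connected components, and gluing a second free curve at that point can land in genuinely different components of $\overline{M}_{0,0}(X)$ after smoothing. This is not a pathology to be ruled out --- it is the mechanism behind the paper's motivating examples ($\mathrm{Hilb}^{2}(\mathbb{P}^{1}\times\mathbb{P}^{1})$ and the face-contracting del Pezzo surface), where the number of components genuinely grows polynomially. Your argument, if correct, would bound the count by the number of multisets and would essentially contradict that growth being forced by covers. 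Note also that a priori, if each successive gluing could multiply the number of main components by $e$, one would get $e^{k}$ components with $k\sim d/\epsilon$, i.e.\ an \emph{exponential} bound in $d$; so the gap is not cosmetic.

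The paper's proof is built around exactly this difficulty. It inducts on $e=\sup_{\beta}e_{\beta}$, where $e_{\beta}$ is the degree of the Stein factorization of the universal family map $M_{\beta}'\to X$. The base case $e=1$ is your argument (there the fiber product of chains of a fixed combinatorial type \emph{is} irreducible, which is the content of Observation \ref{obse:irreducibility}, and one reduces ordered types to multisets via the reordering Lemma \ref{lemm:reorderingcomponents} --- a step you assert via ``commutativity'' but which requires a degeneration argument with a contracted component, since a $k$-chain is not symmetric under arbitrary permutations of its links). For $e>1$ the paper shows that the components with $e_{\beta}>1$ factor rationally through finitely many covers $Z\to X$ (Lemma \ref{lemm:finitenessinsmalldegree}), reorders the chain so that these problematic links come first, bounds the number of ways to glue them by induction on $Z$, and only then attaches the remaining degree-one links using the irreducibility observation. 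To repair your proof you would need to supply this entire mechanism; the unobstructedness computation and the final multiset count are fine, but they are the easy parts.
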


Theorem \ref{theo:polynomialbounds} should be contrasted with bounds on the number of components of the Chow variety which are exponential in $d$ (\cite{Manin95}, \cite{Kollar}, \cite{guerra99}, \cite{hwang05}).

It is natural to wonder whether free curves of sufficiently high degree can always be deformed (as a stable map) to a chain of free curves of smaller degree.  Although this property seems subtle to verify, we are not aware of any Fano variety for which it fails.  We are able to verify it in some new situations for Fano varieties of small dimension.  By applying general theory, we can then understand the behavior of rational curves by combining an analysis of the $a$ and $b$ invariants with a few computations in low degree.

\begin{theo}
Let $X$ be a smooth Fano threefold of index $2$ with $\mathrm{Pic}(X)= \mathbb Z H$.  
We show that if $H^3 \geq 3$, or $H^3 = 2$ and $X$ is general in its moduli, then $\mathrm{Mor}(\mathbb{P}^{1},X)$ has two components of any anticanonical degree $2d \geq 4$: the family of $d$-fold covers of lines, and a family of irreducible degree $2d$ curves.
\end{theo}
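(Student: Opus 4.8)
The plan is to combine the structural result Theorem~\ref{intro:expecteddim}, an elementary dimension count, and a bend-and-break argument that reduces the classification to finitely many explicit computations in low degree. Since $-K_X=2H$ and $\dim X=3$, a rational curve has anticanonical degree $2d$ precisely when it has $H$-degree $d$, and the expected dimension of the corresponding locus in $\Mor(\bP^1,X)$ is $2d+3$.

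First I would extract what Theorem~\ref{intro:expecteddim} gives here. For a smooth del Pezzo threefold with $\Pic(X)=\bZ H$ the exceptional set $V$ is empty (this is the second Example displayed after Theorem~\ref{intro:expecteddim}, via \cite{LTT14}), so every component of $\Mor(\bP^1,X)$ of anticanonical degree $2d$ has dimension exactly $2d+3$, parametrizes a dominant family, and has a free generic member; in particular $\Mor(\bP^1,X)$ is smooth of dimension $2d+3$ at a general point of any component. Next I would run the dimension count that eliminates all multiple-cover components except covers of lines. Let $M$ be a component whose generic member $f$ is not birational onto its image; then $f$ factors as $g\circ h$ with $g\colon\bP^1\to X$ birational onto an irreducible rational curve of $H$-degree $e'$ and $h\colon\bP^1\to\bP^1$ of degree $e\geq 2$, where $ee'=d$, this factorization being unique up to reparametrizing the intermediate $\bP^1$. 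Sending $f$ to the class of $(g,h)$ is generically injective on $M$, so
\[
2d+3 \;=\; \dim M \;\leq\; (2e'+3)+(2e+1)-3,
\]
where the previous step bounds the dimension of the family of the maps $g$ by $2e'+3$. This forces $(e-1)(e'-1)\leq 0$, hence $e'=1$: the only multiple-cover components are covers of lines. Conversely, the degree-$d$ covers of free lines form a locus of dimension $2+(2d+1)=2d+3$ inside the smooth locus of $\Mor(\bP^1,X)$ — a cover of a free line is free — hence a component, and it is irreducible provided the family of lines on $X$ is irreducible of dimension $2$; this holds for $H^3\geq 3$ by classical results and for $H^3=2$ when $X$ is general in moduli.

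It then remains to show that the components whose generic member \emph{is} birational onto its image reduce to a single one, i.e.\ that for every $d\geq 1$ the birational maps $\bP^1\to X$ of $H$-degree $d$ form an irreducible family, whose closure is a single component $\overline{R}_d$. The plan is induction on $d$. For $d$ below the degree bound $d_0$ furnished by the bend-and-break statement (that a free rational curve of sufficiently large degree degenerates, as a stable map, to a chain of free curves of smaller degree) established earlier in the paper for threefolds of this type, this is the classical irreducibility of the Hilbert schemes of lines, conics, and of the remaining low-degree cases — once more invoking generality of $X$ when $H^3=2$. For $d\geq d_0$, let $M$ be such a component; by bend-and-break its general member degenerates to a chain of two free curves of $H$-degrees summing to $d$, each $<d$, and iterating together with the inductive hypothesis, $M$ degenerates to a connected chain of $d$ lines. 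The space of connected chains of $d$ lines on $X$ is irreducible — this follows from irreducibility of the Fano surface of lines together with irreducibility of the successive incidence correspondences — and a general member is a tree of $d$ distinct lines, hence a smoothable genus-zero stable map whose general smoothing is an irreducible degree-$d$ curve lying on a \emph{unique} component of $\overline{M}_{0,0}(X,d)$. Since each such $M$ meets this irreducible family of chains, all of them coincide. Combining with the previous step, $\Mor(\bP^1,X)$ in anticanonical degree $2d$ has exactly the two claimed components for every $d\geq 2$, which is the assertion since $2d\geq 4$.

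The hard part is the inductive step, and two points within it need care. First, one must know the incidence varieties in play — chains of lines, and more generally the boundary strata of $\overline{M}_{0,0}(X,d)$ used in bend-and-break — are irreducible; this is where irreducibility of the Fano surface of lines enters, and is exactly why a generality hypothesis is needed when $H^3=2$. Second, one must know that a chain of free curves meeting transversally at a single point smooths without obstruction, so that it lies on a unique component of $\overline{M}_{0,0}(X,d)$; this is the gluing mechanism underlying Theorem~\ref{theo:polynomialbounds}, which I would quote rather than reprove. The remaining nontrivial input — that a free rational curve of large degree degenerates to a chain of lower-degree free curves — is the bend-and-break property itself, which for these particular Fano threefolds is established by the low-dimensional arguments referenced in the introduction.
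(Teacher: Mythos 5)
Your overall architecture parallels the paper's: expected dimension for every component, a dimension count eliminating multiple covers other than covers of lines, and an induction that degenerates a birational component to a chain of free lines and then argues there is only one component such a chain can lie on. The dimension count in your second step is correct (it is exactly the parenthetical remark in the paper's proof of Theorem \ref{theo:componentsforfanothreefolds}), and quoting the bend-and-break degeneration (Theorem \ref{theo:polygrowthofmbar}) and the smoothability of chains of free curves is legitimate.

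The genuine gap is in the inductive step, where you assert that ``the space of connected chains of $d$ lines on $X$ is irreducible'' and that this ``follows from irreducibility of the Fano surface of lines together with irreducibility of the successive incidence correspondences.'' The second clause is precisely what needs proof: the chain space is the iterated fiber product $\mathcal R_1'\times_X\mathcal R_1''\times_X\cdots\times_X\mathcal R_1'$, and the evaluation map from the universal family of lines to $X$ is generically finite of degree $>1$ (e.g.\ six lines through a general point of a cubic threefold), so irreducibility of this fiber product amounts to a multi-transitivity statement for the monodromy of that cover and does not follow from irreducibility of $\mathcal R_1$ alone. You give no argument for it, and it would require a nontrivial case-by-case analysis. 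The paper sidesteps this entirely by proving, as part of the inductive statement, that the general fiber of $\mathrm{ev}_1\colon\mathcal R_{d-1}'\to X$ is irreducible: a nontrivial Stein factorization of (a resolution of) this map would be an $a$-cover, hence by Lemma \ref{lemm:abclassification} would factor through the family of lines, forcing the parametrized curves to be multiple covers of lines, a contradiction. That fiber irreducibility is what makes the main component of $\mathcal R_1'\times_X\mathcal R_{d-1}'$ unique, and the induction only needs the degenerate chain to land on that one main component --- never irreducibility of the full space of chains. To repair your argument you must either establish the monodromy transitivity on fibers of $\mathcal U\to X$ and its iterates, or build the fiber-irreducibility statement into your induction as the paper does. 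Two lesser points: the bend-and-break theorem you quote itself rests on the pointed dimension bounds of Theorem \ref{theo:threefoldfinitebadpoints}, which are strictly stronger than what you extract from Theorem \ref{intro:expecteddim}; and the ``classical'' low-degree input is really Proposition \ref{prop: conics}, which is where the generality hypothesis for $H^3=2$ actually enters via a monodromy/Lefschetz argument rather than being off-the-shelf.
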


In fact our proof shows that $\overline{M}_{0,0}(X)$ has the same components; this implies, for example, that certain Gromov-Witten invariants on $X$ are enumerative.
Previously such results were known for cubic threefolds by work of Starr (see \cite[Theorem 1.2]{CS09}) and for complete intersections of two quadrics by \cite{Cas04}, and our method significantly simplifies the proofs of these papers using the analysis of $a, b$ invariants.

\subsection{Manin-type bound}

Using the previous results, we prove an upper bound of Manin-type for the moduli space of rational curves.   Suppose that $X$ is a smooth projective uniruled variety and that $L$ is a big and nef divisor.  As a first attempt at a counting function, fix a variable $q$ and define
$$N(X, L, q, d) = \sum_{i=1}^{d} \sum_{W \in \mathcal{S}_{i}} q^{\dim W}$$
where $\mathcal{S}_{i}$ denotes the set of components $M \subset \mathrm{Mor}(\mathbb P^1, X)$ satisfying:
\begin{enumerate}
\item $M$ generically parametrizes free curves.
\item The curves parametrized by $M$ have $L$-degree $i \cdot r(X,L)$, where $r(X,L)$ is the minimal positive number of the form $L \cdot \alpha$ for a $\mathbb{Z}$-curve class $\alpha$.
\item The curves parametrized by $M$ satisfy $(K_{X} + a(X,L)L) \cdot C = 0$.
\end{enumerate}
This is not quite the correct definition; as usual in Manin's Conjecture one must remove the contributions of an ``exceptional set.''  In the number theoretic setting one must remove a thin set of points to obtain the expected growth rate.  An analogous statement is true in our geometric setting as well, and in Section \ref{section:statementofmc} we give a precise formulation of which components should be included in the definition of $N(X,L,q,d)$.

After modifying the counting function in this way, we can prove an asymptotic upper bound.  For simplicity we only state a special case:

\begin{theo} \label{theo:firstmc}
Let $X$ be a smooth projective Fano variety.  Fix $\epsilon > 0$; then for sufficiently large $q$
\begin{equation*}
N(X,-K_X, q,d) = O\left(q^{dr(X,-K_{X})(1+\epsilon)} \right).
\end{equation*}
\end{theo}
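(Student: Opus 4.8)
Write $r = r(X,-K_X)$ and $n = \dim X$. The plan is to combine the dimension formula of Theorem~\ref{intro:expecteddim} with a classical exponential upper bound on the number of components of spaces of rational curves of bounded degree, and then to note that once $q$ is large the exponential factor is absorbed into the $q^{\epsilon r d}$ of slack in the exponent. First I would record that for $X$ Fano we have $a(X,-K_X)=1$: indeed $-K_X$ is ample, hence big, and $t(-K_X)+K_X = (1-t)K_X$ lies in $\Eff^1(X)$ precisely for $t\ge 1$, since a positive multiple of $K_X$ is not pseudo-effective on a Fano variety. Consequently condition (3) in the definition of $\mathcal{S}_i$, namely $(K_X + a(X,-K_X)(-K_X))\cdot C = 0$, is automatic, so $\mathcal{S}_i$ is simply the set of components of $\mathrm{Mor}(\mathbb P^1,X)$ generically parametrizing free curves of anticanonical degree $ir$.

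Next I would compute the dimension of each $W\in\mathcal{S}_i$. Its general member $[f]$ is a free curve, so $\mathrm{Mor}(\mathbb P^1,X)$ is smooth at $[f]$ of dimension $\chi(\mathbb P^1, f^*T_X) = -K_X\cdot C + n = ir + n$. Moreover a free curve deforms, within $W$, to sweep out $X$, so $W$ contains members not lying on the proper closed subset $V$ of Theorem~\ref{intro:expecteddim}; that theorem then forces $\dim W$ to equal the expected value, so $\dim W = ir + n$ for every $W\in\mathcal{S}_i$. Then I would bound $\#\mathcal{S}_i$ from above. The curves in question have anticanonical degree $ir$, hence (as $-K_X$ is ample, so the effective classes $C$ with $-K_X\cdot C = ir$ form a finite set) bounded degree with respect to a fixed projective embedding $X\subseteq\mathbb P^N$; the locus of such morphisms sits inside a fixed projective space and is cut out by the equations $F_\ell(g_0,\dots,g_N)\equiv 0$ coming from the defining equations of $X$, a system whose number and degrees grow only linearly in the degree. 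A Bezout-type estimate for the number of irreducible components of such a scheme (alternatively, the classical exponential bounds on the number of components of $\mathrm{Chow}_{\le m}(X)$, cf.\ \cite{Manin95, Kollar, guerra99, hwang05}, which dominate the $\mathrm{Mor}$ count because a morphism from $\mathbb P^1$ has irreducible image) yields constants $A,B>0$, depending only on $X$, with $\#\mathcal{S}_i\le A\,B^{i}$ for all $i$.

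Assembling these and using $\dim W = ir+n$,
\[
N(X,-K_X,q,d)\;=\;\sum_{i=1}^{d}\sum_{W\in\mathcal{S}_i} q^{\dim W}\;=\;q^{n}\sum_{i=1}^{d}\#\mathcal{S}_i\,q^{\,ir}\;\le\;A\,q^{n}\sum_{i=1}^{d}\bigl(Bq^{\,r}\bigr)^{i}.
\]
For $q$ large enough that $Bq^{r}\ge 2$ the geometric sum is at most $2(Bq^{r})^{d}$, so $N(X,-K_X,q,d)\le 2A\,q^{n}B^{d}q^{\,rd}$; choosing $q$ large enough that moreover $B\le q^{\,r\epsilon}$ gives $N(X,-K_X,q,d)\le 2A\,q^{n}q^{\,rd(1+\epsilon)}=O(q^{\,rd(1+\epsilon)})$ as $d\to\infty$, with implied constant $2Aq^{n}$ depending only on $X$ and $q$. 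Since the corrected counting function of Section~\ref{section:statementofmc} is obtained by discarding certain components from this sum, the bound holds for it a fortiori.

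The real content of the argument is the dimension equality $\dim W = ir+n$, which depends on Theorem~\ref{intro:expecteddim} (the substantial input, via \cite{HJ16} and the analysis of the $a$-invariant); granting that, the summation above is elementary. The one further point that needs genuine care is the transition from the classical exponential bounds on $\mathrm{Chow}$ (or from a defining system of equations) to an exponential bound on the number of components of $\mathrm{Mor}(\mathbb P^1,X)$ of bounded degree, together with checking that the normalization and exceptional-set conventions in the precise definition of $N$ do not weaken the inequality.
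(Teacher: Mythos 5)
Your argument is correct and follows essentially the same route as the paper: the paper's proof combines the exponential bound $C^{d}$ on the number of components of free curves (from \cite{Manin95}) with the expected-dimension statement (Theorem \ref{theo:closednotfree}) and exactly the geometric-series counting you carry out, absorbing $C^{d}$ into $q^{\epsilon r d}$ for $q$ large. The only cosmetic difference is that the dimension equality $\dim W = ir+n$ already follows from smoothness of $\mathrm{Mor}(\mathbb{P}^1,X)$ at a free curve, so your appeal to Theorem \ref{intro:expecteddim} there is not actually needed.
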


In the literature there are several examples of Fano varieties for which the components of $\Mor(\mathbb{P}^{1},X)$ have been classified.  In every  example we know of the counting function has the asymptotic behavior predicted by Manin's Conjecture.

\begin{exam} 
Let $X$ be a smooth del Pezzo surface of degree $\geq 2$ which admits a $(-1)$-curve.  Using \cite{Testa09}, Example \ref{exam:delpezzosurface} shows that
\begin{equation*}
N(X,-K_{X},q,d) \sim \frac{q^{2} \alpha(X,L)}{1-q^{-1}} q^{d} d^{\rho(X)-1}.
\end{equation*}
where $\alpha(X,L)$ is the volume of a polytope defined in Definition \ref{defi:alphaconstant} and $\rho(X)$ is the Picard rank of $X$.
\end{exam}


\acknowledgements{
We thank Morten Risager for his help regarding height zeta functions and Chen Jiang for suggesting we use the results of H\"oring.
We also would like to thank Tony V\'arilly-Alvarado for his help to improve the exposition.
We thank the anonymous referees for detailed suggestions to improve the exposition of the paper.}

\section{Preliminaries}

Throughout we work over an algebraically closed field of characteristic $0$.  Varieties are irreducible and reduced.

For $X$ a smooth projective variety we let $N^{1}(X)$ denote the space of $\mathbb{R}$-divisors up to numerical equivalence.  It contains a lattice $N^{1}(X)_{\mathbb{Z}}$ consisting of classes of Cartier divisors.  We let $\Eff^{1}(X)$ and $\Nef^{1}(X)$ denote the pseudo-effective and nef cones of divisors respectively; their intersections with $N^{1}(X)_{\mathbb{Z}}$ are denoted $\Eff^{1}(X)_{\mathbb{Z}}$ and $\Nef^{1}(X)_{\mathbb{Z}}$.  Dually, $N_{1}(X)$ denotes the space of curves up to numerical equivalence with natural lattice $N_{1}(X)_{\mathbb{Z}}$.  $\Eff_{1}(X)$ and $\Nef_{1}(X)$ denote the pseudo-effective and nef cones of curves, containing lattice points $\Eff_{1}(X)_{\mathbb{Z}}$ and $\Nef_{1}(X)_{\mathbb{Z}}$.

Suppose that $f,g: \mathbb{N} \to \mathbb{R}$ are two positive real valued functions.  We use the symbol $f(d) \sim g(d)$ to denote ``asymptotically equal'':
\begin{equation*}
\lim_{d \to \infty} \frac{f(d)}{g(d)} = 1
\end{equation*}
We will also use the standard ``big-O'' notation when we do not care about constant factors.

Certain kinds of morphisms play a special role in Manin's Conjecture:

\begin{defi}
We say that a morphism of projective varieties $f: Y \to X$ is a thin morphism if $f$ is generically finite onto its image but is not both dominant and birational.
\end{defi}

\section{Geometric invariants $a, b$} \label{absection}

\subsection{Background}
We recall the definitions of the $a$ and $b$ invariants studied in \cite{HTT15}, \cite{LTT14}, \cite{HJ16}, \cite{LT16}.
These invariants also play a central role in the study of cylinders, see, e.g., \cite{CPW16}.

\begin{defi}\cite[Definition 2.2]{HTT15}
\label{defi: Fujita invariant}
Let $X$ be a smooth projective variety 
and let $L$ be a big and nef $\mathbb{Q}$-divisor on $X$. 
The {\it Fujita invariant} is
$$
a(X, L) := \min \{ t\in \bR \mid t[L] + [K_X] \in \Eff^{1}(X) \}.
$$
If $L$ is not big, we set $a(X,L) = \infty$.
\end{defi}

By \cite[Proposition 7]{HTT15}, $a(X, L)$ does not change when pulling back $L$ by a birational map. 
Hence, we define the Fujita invariant for a singular projective variety $X$ 
by pulling back to a smooth resolution $\beta : \tilde{X} \ra X$:
$$
a(X, L):= a(\tilde{X}, \beta^*L).
$$
This definition does not depend on the choice of $\beta$.  
It follows from \cite{BDPP} that $a(X,L)$ is positive if and only if $X$ is uniruled. 

\begin{defi}\cite[Definition 2.8]{HTT15}
\label{defi: invariant b}
Let $X$ be a smooth projective variety such that $K_X$ is not pseudo effective. Let $L$ be a big and nef $\mathbb{Q}$-divisor on $X$. We define $b(X, L)$ to  be the codimension of the minimal supported face of $\Eff^{1}(X)$ containing the class $a(X, L)[L] + [K_X]$. 
\end{defi}

Again, this is a birational invariant (\cite[Proposition 9]{HTT15}), 
and we define $b(X, L)$ for a singular variety $X$ by taking a smooth resolution $\beta : \tilde{X} \ra X$ and setting
$$
b(X, L) := b(\tilde{X}, \beta^*L).
$$
This definition does not depend on the choice of $\beta$.  It turns out $b$ has a natural geometric interpretation in terms of Picard ranks (see \cite[Corollary 3.9 and Lemma 3.10]{LTT14}).



\subsection{Compatibility statements} \label{section:abcompatibility} 
Let $X$ be a smooth projective variety and let $L$ be a big and nef $\mathbb{Q}$-divisor on $X$.  Suppose that $f: Y \to X$ is a thin morphism.  It will be crucial for us to understand when
\begin{equation*}
(a(Y,f^{*}L),b(Y,f^{*}L)) > (a(X,L),b(X,L))
\end{equation*}
in the lexicographic order.  We say that $f$ {\it breaks the weakly balanced condition} when such an inequality holds.  When $f$ only induces an inequality $\geq$, we say that $f$ {\it breaks the balanced condition}.  

The case when $f: Y \to X$ is the inclusion of a subvariety is of particular importance.  The following theorem of \cite{HJ16} describes when the $a$-invariant causes an inclusion $f: Y \to X$ to break the balanced condition.  The proof relies upon the recent boundedness statements of Birkar.

\begin{theo}[\cite{LTT14} Theorem 4.8 and \cite{HJ16} Theorem 1.1] \label{theo:boundedavalues}
Let $X$ be a smooth uniruled projective variety and let $L$ be a big and nef $\mathbb{Q}$-divisor on $X$.  Let $V$ denote the union of all subvarieties $Y$ such that $a(Y,L|_{Y}) > a(X,L)$.  Then $V$ is a proper closed subset of $X$ and its components are precisely the maximal elements in the set of subvarieties with higher $a$-value.
\end{theo}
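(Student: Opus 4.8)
The plan is to deduce the theorem from two ingredients: the boundedness of weak Fano varieties (Birkar's theorem), which is the engine of \cite{HJ16}, and an elementary lemma on restricting pseudo-effective classes to members of a covering family. I first isolate the latter. \emph{Covering Lemma:} if $\{Y_{b}\}_{b\in B}$ is a family of subvarieties of a projective variety $W$ whose union is dense in $W$, then a general member satisfies $a(Y_{b},L|_{Y_{b}})\le a(W,L|_{W})$. To see this, replace $B$ by a general complete intersection so that the total space $\mathcal{Y}$ maps generically finitely onto $W$, resolve $\mathcal{Y}$, and write $K_{\mathcal{Y}}=s^{*}K_{W}+R$ with $R$ effective; then $a(W,L|_{W})\,s^{*}(L|_{W})+K_{\mathcal{Y}}\in\Eff^{1}(\mathcal{Y})$, and restricting to a general fiber $Y_{b}$ (on which $K_{\mathcal{Y}}$ restricts to the canonical class and $R$ to an effective divisor) yields $a(W,L|_{W})\,L|_{Y_{b}}+K_{Y_{b}}\in\Eff^{1}(Y_{b})$; birational invariance of $a$ absorbs the resolutions.

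\textbf{Boundedness (the main obstacle).} Subvarieties $Y$ with $L|_{Y}$ not big lie in the fixed proper closed subset along which $L$ fails to be big, so we may assume $L|_{Y}$ is big and nef and $a_{Y}:=a(Y,L|_{Y})\in(a(X,L),\infty)$. On a resolution $\pi\colon\widetilde{Y}\to Y$ the class $a_{Y}\,\pi^{*}(L|_{Y})+K_{\widetilde{Y}}$ lies on the boundary of $\Eff^{1}(\widetilde{Y})$, hence on a proper supported face; running a minimal model program adapted to the minimal such face produces, after a birational modification, a fibration $g\colon\widetilde{Y}'\to Z$ with $\dim Z<\dim Y$ whose general fiber $G$ satisfies $K_{G}+a_{Y}(L|_{G})\equiv0$. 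Thus $G$ is a weak Fano variety with canonical singularities and $\dim G\le\dim X$, so by Birkar's boundedness theorem the $G$'s form a bounded family as $Y$ varies; comparing $-K_{G}$ and $L|_{G}$ via intersection with $(L|_{G})^{\dim G-1}$ then shows $a_{Y}=a(G,L|_{G})$ takes only finitely many values, each a rational number in $(a(X,L),C]$ with $C$ depending only on $(X,L)$. An induction on $\dim Y$ controlling the base $Z$ and the fibration $g$ (using structural results for Fano-type fibrations) upgrades boundedness of the fibers $G$ to boundedness of the subvarieties $Y$ themselves; this inductive step is the technical heart, carried out in \cite[Theorem~1.1]{HJ16}, and is the only nonformal part of the argument.

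\textbf{Properness and closedness of $V$.} Boundedness means the higher $a$-value subvarieties belong to finitely many families $\mathcal{Y}_{t}\to B_{t}$. Using boundedness together with the semicontinuity properties of the $a$-invariant in families, the locus in $B_{t}$ where $a>a(X,L)$ is closed, so its image in $X$ is closed; hence $V$, being a finite union of such images, is closed. If some $\mathcal{Y}_{t}$ dominated $X$, then a general member $Y$ would be a general member of a family of subvarieties of $X$ with dense union, and the Covering Lemma would force $a(Y,L|_{Y})\le a(X,L)$, contradicting $a(Y,L|_{Y})>a(X,L)$. So no family dominates $X$ and $V\subsetneq X$; this recovers \cite[Theorem~4.8]{LTT14}, originally conditional on precisely the boundedness now supplied by \cite{HJ16}.

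\textbf{Components versus maximal elements.} Write $V=W_{1}\cup\dots\cup W_{m}$ where $W_{t}$ is the closure of the image of $\mathcal{Y}_{t}$ in $X$, an irreducible set; the irreducible components of $V$ are the maximal $W_{t}$'s. If $Z=W_{t_{0}}$ is a component, then the higher $a$-value members of $\mathcal{Y}_{t_{0}}$ form a family of subvarieties with dense union in $Z$, so the Covering Lemma applied with ambient variety $Z$ gives $a(Y,L|_{Y})\le a(Z,L|_{Z})$ for a general such member, whence $a(Z,L|_{Z})>a(X,L)$: every component of $V$ is itself a subvariety with higher $a$-value. It is maximal, since any $Y'\supsetneq Z$ with higher $a$-value lies in $V$, hence in some component $W_{t'}$, forcing $Z\subsetneq W_{t'}$, which is impossible. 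Conversely, a maximal subvariety $M$ with higher $a$-value lies in $V$, hence in a component $Z$, and since $Z$ also has higher $a$-value, maximality gives $M=Z$. Thus the components of $V$ are exactly the maximal elements among the subvarieties with higher $a$-value.
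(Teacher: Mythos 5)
Your proposal is correct and follows essentially the same route as the paper: the properness and closedness of $V$ are deferred to Birkar's boundedness theorem via \cite[Theorem 1.1]{HJ16} (equivalently, \cite[Theorem 4.8]{LTT14} now made unconditional), and the identification of the components of $V$ with the maximal higher-$a$-value subvarieties rests on your Covering Lemma, which is precisely \cite[Proposition 4.1]{LTT14} as invoked in the paper. The only difference is one of exposition — you unpack the cited ingredients rather than citing them — so no further comparison is needed.
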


\begin{proof}
Since \cite{birkar16b} has settled the Borisov-Alexeev-Borisov Conjecture, \cite[Theorem 4.8]{LTT14} proves that the closure of $V$ is a proper closed subset of $X$.  In fact the proof gives a little bit more: every component of $V$ is dominated by a family of subvarieties with $a$-value higher than $X$.  By \cite[Proposition 4.1]{LTT14} every component of $V$ will also have higher $a$-value than $X$.
\end{proof}

The other important case to consider is when $f: Y \to X$ is a dominant map.  For convenience we formalize this situation into a definition.

\begin{defi}
Let $X$ be a smooth uniruled projective variety and let $L$ be a big and nef $\mathbb{Q}$-divisor on $X$.  
We say that a morphism from a smooth projective variety $f:Y \rightarrow X$ is {\it an $a$-cover} if (i) $f$ is a dominant thin morphism and (ii) $a(Y, f^*L) =  a(X, L)$.
\end{defi}

\subsection{Face contraction} \label{section:facecontracting}

The following definitions encode a slightly more refined version of the $b$ invariant.

\begin{defi}
Let $X$ be a smooth uniruled projective variety and let $L$ be a big and nef $\mathbb{Q}$-divisor on $X$.   We let $F(X,L)$ denote the face of $\Nef_{1}(X)$ consisting of those curve classes $\alpha$ satisfying $(K_{X} + a(X,L)L) \cdot \alpha = 0$.
\end{defi}

When $f: Y \to X$ is an $a$-cover, the Riemann-Hurwitz formula implies that the pushforward $f_{*}: N_{1}(Y) \to N_{1}(X)$ maps $F(Y,f^{*}L)$ to $F(X,L)$.

\begin{defi}
Let $X$ be a smooth uniruled projective variety and let $L$ be a big and nef $\mathbb{Q}$-divisor on $X$.  
We say that a morphism $f: Y \to X$ is \emph{face contracting} if $f$ is an $a$-cover and the map $f_{*}: F(Y,f^{*}L) \to F(X,L)$ is not injective.
\end{defi}

Recall that the dimensions of the faces $F(Y,f^{*}L), F(X,L)$ are respectively $b(Y,f^{*}L), b(X,L)$.  Thus, if $f$ breaks the weakly balanced condition then it is also automatically face contracting.  However, $F(Y,f^{*}L)$ need not surject onto $F(X,L)$ and so not all face contracting morphisms break the weakly balanced condition.

\begin{exam} \label{exam:facecontracting}
\cite{MZ88} identifies a del Pezzo surface $X'$ with canonical singularities which admits a finite cover $f': Y' \to X'$ which is \'etale in codimension $1$ and such that $\rho(X') = 1$ and $\rho(Y')=2$.  Let $f: Y \to X$ be a resolution of this map and set $L = -K_{X}$.  \cite[Theorem 6.1]{LT16} shows that $f$ does not break the weakly balanced condition.  Nevertheless, we claim that the pushforward $f_{*}$ contracts $F(Y,f^{*}L)$ to a face $F$ of smaller dimension.  

It suffices to find two different classes in $F(Y,f^{*}L)$ whose images under $f_{*}$ are the same.  Since $f'_{*}: N_{1}(Y') \to N_{1}(X')$ drops the Picard rank by $1$, there are ample curve classes $\beta$ and $\beta'$ on $Y'$ whose images under $f'_{*}$ are the same.  Let $\alpha$ and $\alpha'$ be their pullbacks in $N_{1}(Y)$; we show that $f_{*}\alpha = f_{*}\alpha'$. 

Note that every curve exceptional for the birational morphism $X \to X'$ pulls back under $f$ to a union of curves exceptional for the morphism $Y \to Y'$.  Applying the projection formula to $f$, we deduce that $f_{*}\alpha$ and $f_{*}\alpha'$ have vanishing intersection against every exceptional curve for $X \to X'$.  Since $f_{*}\alpha$ and $f_{*}\alpha'$ also push forward to the same class on $X'$ by construction, we conclude that $f_{*}\alpha = f_{*}\alpha'$.
\end{exam}

Face contracting morphisms are important for understanding the leading constant in Manin's Conjecture.  Fix a number field $K$, and suppose that $f: Y \to X$ is a dominant generically finite morphism of smooth projective varieties over $K$ with equal $a,b$-values.  Manin's conjecture predicts that the growth rate of rational points of bounded height is the same on $X$ and $Y$.  Thus to obtain the correct Peyre's constant for the rate of growth of rational points one must decide whether or not to include $f(Y(K))$ in the counting function.

Face contraction gives us a geometric criterion to distinguish whether we should include the point contributions from $Y$.  When $X$ is a Fano variety with an anticanonical polarization, the key situation to understand is when $f : Y\rightarrow X$ is Galois and $a(X, L)f^*L + K_Y$ has Iitaka dimension $0$.  After replacing $Y$ by a birational modification, we may assume that any birational transformation of $Y$ over $X$ is regular.  In this situation \cite[Proposition 8.4]{LT16} gives a geometric condition determining whether $f$ and its twists give the entire set of rational points (and thus whether or not these contributions must be removed).  It turns out that the geometric condition of \cite[Proposition 8.4]{LT16} is equivalent to being face contracting.


\subsection{Varieties with large $a$-invariant}

The papers \cite{Fujita89}, \cite{Horing10}, \cite{Andreatta13} give a classification of varieties with large $a$-invariant in the spirit of the Kobayashi-Ochiai classification.  The following two results are immediate consequences of \cite[Proposition 1.3]{Horing10} which classifies the smooth projective varieties and big and nef Cartier divisors satisfying $a(X,L) > \dim(X) - 1$.

\begin{lemm} \label{lemm:adjunctionresult}
Let $Y$ be a smooth projective variety of dimension $r$ and let $H$ be a big and nef divisor on $Y$.  Suppose that $a(Y,H) > r$.  Then $H^{r} = 1$.
\end{lemm}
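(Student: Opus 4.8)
The plan is to read this off H\"oring's classification rather than to re-prove anything. \cite[Proposition 1.3]{Horing10} lists all pairs $(Z,D)$ consisting of a smooth projective variety $Z$ of dimension $n$ together with a big and nef Cartier divisor $D$ satisfying $a(Z,D) > n-1$, and for each entry of the list the exact value of $a(Z,D)$ is recorded. The first step is simply to observe that our hypothesis $a(Y,H) > r$ is in particular $a(Y,H) > r-1$, so the pair $(Y,H)$ appears on this list. The second step is to run through the list and retain only those entries whose $a$-invariant is \emph{strictly} larger than $n = \dim Z$.

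The entries with large $a$-invariant fall into two types: on the one hand $(\bP^{r},\cO(1))$ together with its birational modifications carrying $H = \pi^{*}\cO(1)$ (for instance blow-ups of $\bP^{r}$, or the scrolls $\bP(\cO^{\oplus r-1}\oplus\cO_{\bP^{1}}(1))$), for which $a = r+1$; and on the other hand the hyperquadric $(Q^{r},\cO_{Q^{r}}(1))$ and the remaining bundle/scroll-type examples (e.g.\ $\bP^{1}\times\bP^{r-1}$ with $\cO(1,1)$), all of which sit at $a = r$ or lower. Hence the strict inequality $a(Y,H) > r$ leaves only the projective-space case. In that case $H^{r} = 1$: for $(\bP^{r},\cO(1))$ this is immediate, and for a birational morphism $\pi\colon Y \to \bP^{r}$ with $H = \pi^{*}\cO(1)$ the projection formula gives $H^{r} = (\pi^{*}\cO(1))^{r} = \cO(1)^{r} = 1$. (When the birational map $Y \dashrightarrow \bP^{r}$ is not a morphism one can instead resolve it, or argue directly using $\Pic$ of the model; either way $H$ is forced to be the pullback of $\cO(1)$ and the same computation applies.) This yields the lemma.

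The only point demanding care is bookkeeping: one must confirm against \cite{Horing10} — and against the classical Kobayashi--Ochiai and Fujita adjunction theorems it rests on — that the hyperquadric and the bundle-type examples genuinely have $a$-invariant equal to $r$ and not exceeding it, so that the strict inequality $a(Y,H) > r$ really does discard them while keeping only degree-one examples. I do not expect any substantive obstacle beyond this finite verification.
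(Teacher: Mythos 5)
Your argument is correct and is exactly the route the paper takes: the lemma is stated there as an immediate consequence of \cite[Proposition 1.3]{Horing10}, with precisely the observation you make that the strict inequality $a(Y,H)>r$ eliminates the quadric and the projective-bundle cases (all of which have $a$-invariant exactly $r$), leaving only $(\mathbb{P}^{r},\mathcal{O}(1))$ and its birational modifications with $H$ a pullback of $\mathcal{O}(1)$, whence $H^{r}=1$ by the projection formula. No further comment is needed.
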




\begin{lemm} \label{lemm:abiggerthanr-1}
Let $Y$ be a smooth projective variety of dimension $r\geq 2$ and let $H$ be a big and basepoint free divisor on $Y$.  Suppose that $a(Y,H) > r-1$ and that $\kappa(K_{Y} + a(Y,H)H) = 0$.  Then $H^{r} \leq 4$.  Furthermore, if $H^{r} = 4$ then a surface $S$ defined by a general complete intersection of elements of $H$ admits a birational morphism to $\mathbb{P}^{2}$ and $H|_{S}$ is the pullback of $\mathcal{O}(2)$.
\end{lemm}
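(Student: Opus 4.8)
The plan is to reduce to the two-dimensional case and then quote the classification of H\"oring.

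Since $H$ is big and basepoint free, a general complete intersection $S = D_{1}\cap\dots\cap D_{r-2}$ with $D_{i}\in|H|$ is, by Bertini, a smooth irreducible surface (taking $S = Y$ when $r = 2$). Iterated adjunction gives $K_{S} = (K_{Y}+(r-2)H)|_{S}$, hence $K_{S}+tH|_{S} = (K_{Y}+(r-2+t)H)|_{S}$ for all $t$, while a cycle computation gives $(H|_{S})^{2} = H^{r}$; and $H|_{S}$ is again big (it is nef with $(H|_{S})^{2} = H^{r} > 0$) and basepoint free. The point is that $(S,H|_{S})$ inherits the hypotheses of the lemma with $r$ replaced by $2$. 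Using $\kappa(K_{Y}+a(Y,H)H) = 0$ --- so that $K_{Y}+a(Y,H)H \sim_{\mathbb{Q}} E$ for a unique effective $\mathbb{Q}$-divisor $E$ with $h^{0}(Y,\lfloor mE\rfloor) \le 1$ for all $m$ --- one shows, cutting by one member of $|H|$ at a time, that $a(S,H|_{S}) = a(Y,H)-(r-2)$ and $\kappa(K_{S}+a(S,H|_{S})H|_{S}) = \kappa(E|_{S}) = 0$. The bound $a(S,H|_{S}) \le a(Y,H)-(r-2)$ is immediate since $E|_{S} \ge 0$; the reverse inequality (and the inductive step) rests on the fact that $\kappa(E|_{S}) = 0$, i.e.\ that passing to a general member of $|H|$ creates no new sections of multiples of $E$. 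Granting this, $a(S,H|_{S}) > r-1-(r-2) = 1 = \dim S - 1$.

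Now I would apply \cite[Proposition 1.3]{Horing10} to the smooth surface $S$ with the big and basepoint free divisor $H|_{S}$. Running through the resulting short list of pairs and discarding those for which $\kappa(K_{S}+a(S,H|_{S})H|_{S}) \ne 0$ --- concretely, the scroll-type pairs over a base curve, for which this Iitaka dimension equals $1$ --- leaves only pairs with $(H|_{S})^{2} \le 4$, where $(H|_{S})^{2} = 4$ occurs precisely for $\mathbb{P}^{2}$ polarized by $\mathcal{O}(2)$ and its blow-ups polarized by the pullback of $\mathcal{O}(2)$. This gives $H^{r} = (H|_{S})^{2} \le 4$, and in the equality case it exhibits a birational morphism $\beta: S \to \mathbb{P}^{2}$ with $H|_{S} = \beta^{*}\mathcal{O}(2)$, as required. (One can check this last structural statement directly with a Hodge-index computation: $(H|_{S})^{2} = 4$ together with $\kappa(K_{S}+a(S,H|_{S})H|_{S}) = 0$ forces $a(S,H|_{S}) = 3/2$ and $K_{S} + \tfrac{3}{2}H|_{S} \sim_{\mathbb{Q}} E$ with $E$ effective, $\kappa(E) = 0$ and $E^{2} \le 0$, which pins down $(S,H|_{S})$. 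Alternatively, one may bypass the reduction entirely and read both conclusions off the dimension-$r$ classification in \cite[Proposition 1.3]{Horing10}, after recording which of its entries satisfy $\kappa(K_{Y}+a(Y,H)H) = 0$.)

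The step I expect to be the main obstacle is the reduction itself --- specifically, showing that $\kappa(K_{S}+a(S,H|_{S})H|_{S}) = 0$ (equivalently $\kappa(E|_{S}) = 0$) for a general complete intersection surface, so that H\"oring's hypotheses genuinely apply to $(S,H|_{S})$. Once that is in place, the remainder is bookkeeping against a known classification.
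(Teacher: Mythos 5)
Your main route --- cutting down to a general complete intersection surface $S$ and applying H\"oring in dimension $2$ --- is not the paper's argument, and it contains a genuine gap at exactly the step you flag. The paper offers no reduction at all: H\"oring's Proposition 1.3 is already a classification \emph{in dimension $r$} of pairs $(Y,H)$ with $H$ big and nef and $a(Y,H)>r-1$, up to birational equivalence ($\mathbb{P}^r$, quadrics, scrolls over curves, and $(\mathbb{P}^2,\mathcal{O}(2))$ together with its generalized cones $\mathbb{P}(\mathcal{O}_{\mathbb{P}^2}(2)\oplus\mathcal{O}^{\oplus(r-2)})\to\mathbb{P}^2$ polarized by the tautological class). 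One discards the scroll entries with $\kappa(K_Y+a(Y,H)H)=1$; the surviving entries have $H^r\in\{1,2,4\}$, and the $H^r=4$ entries are precisely the (generalized cones over the) Veronese, whose general complete-intersection surfaces are $(\mathbb{P}^2,\mathcal{O}(2))$. That is the entire proof, and it is the route you relegate to a parenthetical at the end.

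The gap in your reduction is the claim that $a(S,H|_S)=a(Y,H)-(r-2)$ and $\kappa(E|_S)=0$. The direction you actually need in order to invoke H\"oring on $S$ is the \emph{lower} bound $a(S,H|_S)\geq a(Y,H)-(r-2)$, which amounts to showing that $E|_S$ is not big; and to discard the scroll case on $S$ you further need $\kappa(E|_S)=0$, not merely non-bigness. Neither follows formally from $\kappa(E)=0$ on $Y$: restriction does not preserve non-pseudo-effectivity or Iitaka dimension, and an effective divisor that is rigid on $Y$ can a priori restrict to a big divisor on a general complete-intersection surface (e.g.\ already the negativity needed to rule this out, such as $E^2\cdot H^{r-2}\leq 0$, is structural information about the pair). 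In this setting the statement is true, but every proof I can see of it passes through the same adjunction-theoretic structure theory that the lemma itself encodes, so as written your argument is circular at its load-bearing step. Either supply a genuine proof of $\kappa(E|_S)=0$ (which you do not attempt), or simply apply \cite[Proposition 1.3]{Horing10} directly in dimension $r$ as the paper does.
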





\section{Expected dimension of rational curves}

We let $\mathrm{Mor}(\mathbb{P}^{1},X)$ denote the quasi-projective scheme parametrizing maps from $\mathbb{P}^{1}$ to $X$ as constructed by \cite{Grothendieck95}.

\begin{defi}
Let $X$ be a smooth projective variety and let $\alpha \in \Eff_{1}(X)_{\mathbb{Z}}$.  We let $\mathrm{Mor}(\mathbb{P}^{1},X,\alpha)$ denote the set of components of $\mathrm{Mor}(\mathbb{P}^{1},X)$ parametrizing curves of class $\alpha$.

Given an open subset $U \subset X$, $\mathrm{Mor}_{U}(\mathbb{P}^{1},X,\alpha)$ denotes the sublocus of $\Mor(\mathbb{P}^{1},X,\alpha)$ which parametrizes curves meeting $U$.
\end{defi}

Let $W$ be an irreducible component of $\mathrm{Mor}(\mathbb P^1, X,\alpha)$.  The ``expected dimension'' of $W$ is $-K_{X} \cdot \alpha + \dim X$.  It turns out that we always have an inequality 
\begin{equation*}
\dim W \geq -K_{X} \cdot \alpha + \dim X
\end{equation*}
and when $W$ parametrizes a dominant family of curves then equality is guaranteed (\cite{Kollar}).

\begin{prop} \label{prop:expdim}
Let $X$ be a smooth projective uniruled variety and let $L$ be a big and nef $\mathbb{Q}$-divisor on $X$.
Let $W$ be an irreducible component of $\mathrm{Mor}(\mathbb P^1, X,\alpha)$ satisfying $(K_{X} + a(X,L)L) \cdot \alpha = 0$, and let $\pi: \mathcal C \rightarrow W$ be the corresponding family of irreducible rational curves with the evaluation map $s: \mathcal C \rightarrow X$.  Set $Z = \overline{s(\mathcal C)}$.
\begin{enumerate}
\item Suppose that the dimension of $W$ is greater than the expected dimension, i.e., 
\[
\dim W > -K_X \cdot \alpha + \dim X. 
\]
Then $a(Z, L|_Z) > a(X, L)$.
\item Suppose that $Z \neq X$.  Then $a(Z, L|_Z) > a(X, L)$.
\end{enumerate}
\end{prop}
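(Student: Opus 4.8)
The plan is to exploit the fact that $W$ parametrizes a family of rational curves through a general point of $Z$, together with the bound on the $a$-invariant coming from restricting $L$ to $Z$. First I would pass to a resolution: let $\beta: \tilde{Z} \to Z$ be a resolution of singularities such that the family of curves through a general point of $Z$ lifts to a family on $\tilde{Z}$, and set $H = \beta^{*}(L|_{Z})$, so that $a(Z, L|_{Z}) = a(\tilde{Z}, H)$ by the birational invariance recalled in Definition~\ref{defi: Fujita invariant}. The key geometric input is a standard deformation-theoretic estimate: if $\mathcal{C} \to W$ is a dominant family of rational curves on a variety $Z$ (after base change to the resolution), then the dimension of the family of curves through a general point is at least $-K_{\tilde{Z}} \cdot \tilde{\alpha} - 2$, and more precisely the total space satisfies $\dim W \le -K_{\tilde{Z}} \cdot \tilde{\alpha} + \dim Z$ only when the family is "unobstructed," while in general one gets the reverse: a family dominating $Z$ forces $\dim W \le -K_{\tilde{Z}}\cdot\tilde\alpha + \dim Z$ is the bound from Kollár that we invert here.

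For part (1): since $\dim W > -K_{X} \cdot \alpha + \dim X$, I would compare this with the a priori bound for the family viewed inside $Z$. A family of rational curves whose general member moves in a family of dimension $\dim W$ and sweeps out $Z$ must satisfy $\dim W \le -K_{\tilde{Z}} \cdot \tilde{\alpha} + \dim Z$; combined with $\dim Z \le \dim X$ this gives $-K_{\tilde Z}\cdot\tilde\alpha \geq \dim W - \dim X > -K_X\cdot\alpha$, i.e. $-K_{\tilde{Z}} \cdot \tilde{\alpha} > -K_{X} \cdot \alpha = a(X,L) \, L \cdot \alpha = a(X,L)\, H \cdot \tilde{\alpha}$, where the middle equality uses the hypothesis $(K_{X} + a(X,L)L)\cdot\alpha = 0$. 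Hence $(K_{\tilde{Z}} + a(X,L) H) \cdot \tilde{\alpha} < 0$. Since $\tilde{\alpha}$ is represented by a covering family of curves on $\tilde{Z}$, the class $\tilde{\alpha}$ is movable, so $K_{\tilde{Z}} + a(X,L) H$ is not pseudo-effective by the characterization of the pseudo-effective cone via movable curves (\cite{BDPP}). By the definition of the $a$-invariant this forces $a(\tilde{Z}, H) > a(X,L)$, which is the conclusion.

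For part (2): here $W$ need not have dimension exceeding the expected dimension, so I would argue via restriction of the class equation instead. Suppose $Z \subsetneq X$, so $\dim Z \le \dim X - 1$. The family $\mathcal{C} \to W$ dominates $Z$ but the curves it parametrizes have $X$-anticanonical degree $-K_X\cdot\alpha = a(X,L)\,L\cdot\alpha$; on the other hand the general member of a dominant family of rational curves on the $\dim Z$-dimensional variety $\tilde Z$ has anticanonical degree $-K_{\tilde Z}\cdot\tilde\alpha \geq \dim W - \dim Z + 2 \geq (-K_X\cdot\alpha + \dim X) - (\dim X - 1) + 2$ wait—more carefully, I would use that through a general point of $Z$ the curves form a family of dimension $\geq \dim W - \dim Z$, and a dominant family through a general point of a smooth projective variety has dimension $\leq -K_{\tilde Z}\cdot\tilde\alpha - 2 + \dim Z$; combining with $\dim W \geq -K_X\cdot\alpha + \dim X$ (the universal lower bound on dimension of components of $\mathrm{Mor}$) and $\dim Z < \dim X$ gives $-K_{\tilde Z}\cdot\tilde\alpha > -K_X\cdot\alpha = a(X,L)\,H\cdot\tilde\alpha$, and one concludes exactly as in part (1) that $(K_{\tilde Z} + a(X,L)H)$ pairs negatively with the movable class $\tilde\alpha$, hence $a(Z, L|_Z) > a(X,L)$.

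The main obstacle I anticipate is the passage to the resolution $\beta:\tilde Z \to Z$: one must ensure that the family $\mathcal C \to W$, or at least the sub-family of curves through a general point of $Z$, can be lifted to $\tilde Z$ (so that the general curve is not contained in the indeterminacy/exceptional locus), and that the numerical comparison $-K_{\tilde Z}\cdot\tilde\alpha$ versus $L\cdot\alpha = H\cdot\tilde\alpha$ is valid — this is fine because $\beta^*(L|_Z)\cdot\tilde\alpha = (L|_Z)\cdot\beta_*\tilde\alpha = L\cdot\alpha$ by the projection formula. A second point requiring care is the precise deformation bound: I would invoke the standard fact (as in \cite{Kollar}, II.1–II.3) that for a free rational curve the moduli has exactly the expected dimension, and that a component whose general member is \emph{not} free, yet which still dominates $Z$, has general-point-fiber dimension strictly exceeding $-K_{\tilde Z}\cdot\tilde\alpha - 2$, which is exactly what produces the strict inequality on the $a$-invariant. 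Everything else is a formal manipulation of the defining inequality for $a$ and the characterization of $\Eff^1$ via movable curves.
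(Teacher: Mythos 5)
Your proposal is correct and follows essentially the same route as the paper: resolve $Z$, transport the family to the resolution, compare the actual dimension of $W$ against the expected dimension on the resolution to force $(K_{\tilde Z}+a(X,L)H)\cdot\tilde\alpha<0$, and then use that a covering family pairing negatively with a divisor class rules out pseudo-effectivity. The only omission relative to the paper is the degenerate case where $L|_Z$ fails to be big (so $a(Z,L|_Z)=\infty$ by convention rather than by the formula); the paper dispatches this at the outset, and your argument needs that remark too since otherwise the step ``$K_{\tilde Z}+a(X,L)H$ not pseudo-effective $\Rightarrow a(\tilde Z,H)>a(X,L)$'' presupposes $H$ big.
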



\begin{proof}
If $a(Z,L|_{Z}) = \infty$ then both statements are true so we may suppose otherwise.  Let $f:Y \rightarrow Z$ be a resolution of singularities.
By taking strict transforms of curves we obtain a family of curves on $Y$, $\mathcal C^\circ \rightarrow W^\circ $, where $W^{\circ}$  is an open subset of the reduced space underlying $W$, and with an evaluation map $s : \mathcal C^\circ \rightarrow Y$.  Let $C$ denote an irreducible curve parametrized by $W^{\circ}$.
Since $W^{\circ}$ is contained in an irreducible component of $\mathrm{Mor}(\mathbb P^1, Y)$ parametrizing curves which dominate $Y$, we have
\[
\dim(W^\circ) \leq -K_Y \cdot C + \dim Y.
\]
The dimension of $W$ is always at least the expected dimension, so $-K_X \cdot f_{*}C  + \dim X \leq -K_Y \cdot C + \dim Y$.  By assumption either this inequality is strict or $\dim Y < \dim X$, and in either case
\[
(K_Y - f^{*}K_X) \cdot C < 0.
\]
Since $(K_{X} + a(X,L)L)|_{Z} \cdot f_{*}C = 0$, we can equally well write $(K_{Y} + a(X,L)L|_{Y}) \cdot C < 0$.  Since $C$ deforms to cover $Y$, $K_Y+a(X,L)f^{*}L$ is not pseudo effective.
This implies that $a(Y, L) > a(X,L)$.
\end{proof}

\begin{exam}
There is of course no analogous statement away from the face of curve classes vanishing against $K_{X} + a(X,L)L$.  Consider for example a K3 surface $S$ containing infinitely many $-2$ curves and let $X = \mathbb{P}^{1} \times S$.  For any big and nef $\mathbb{Q}$-divisor $L$, the divisor $K_{X} + a(X,L)L$ will be the pullback of a divisor on $S$.
Let $C$ be a $(-2)$-curve in some fiber over $\mathbb{P}^{1}$.  Then the component of $\Mor(\mathbb{P}^{1},X)$ corresponding to $C$ has dimension $4 > -K_{X} \cdot C + 3$.  Note however that $C$ has positive intersection against $K_{X} + a(X,L)L$ for any big and nef divisor $L$.
\end{exam}


\begin{theo} \label{theo:closednotfree}
Let $X$ be a smooth projective uniruled variety and let $L$ be a big and nef $\mathbb{Q}$-divisor.  Let $U$ be the Zariski open subset which is the complement of the closure of all subvarieties $Y \subset X$ satisfying $a(Y,L|_{Y}) > a(X,L)$.  Suppose that $\alpha \in \Nef_{1}(X)_{\mathbb{Z}}$ satisfies $(K_{X} + a(X,L)L) \cdot \alpha = 0$.  If $\mathrm{Mor}_U(\mathbb P^1, X, \alpha)$ is non-empty, then
\[
\dim \mathrm{Mor}_U(\mathbb P^1, X, \alpha) = -K_X \cdot \alpha + \dim X
\]
and every component of $\dim \mathrm{Mor}_U(\mathbb P^1, X, \alpha)$ parametrizes a dominant family of rational curves.
\end{theo}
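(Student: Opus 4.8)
The plan is to deduce the theorem from Proposition~\ref{prop:expdim}, the defining property of $U$, and the dimension estimate recalled just before Proposition~\ref{prop:expdim}. First note that $\mathrm{Mor}_U(\mathbb{P}^1,X,\alpha)$ is open in $\mathrm{Mor}(\mathbb{P}^1,X,\alpha)$: it is the intersection with $\mathrm{Mor}(\mathbb{P}^1,X,\alpha)$ of the image of the open set $\mathrm{ev}^{-1}(U)$ under the (open) projection $\mathbb{P}^1 \times \mathrm{Mor}(\mathbb{P}^1,X) \to \mathrm{Mor}(\mathbb{P}^1,X)$, where $\mathrm{ev}$ is the universal evaluation morphism. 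Hence any irreducible component $W$ of $\mathrm{Mor}_U(\mathbb{P}^1,X,\alpha)$ is a dense open subset of a unique irreducible component $W'$ of $\mathrm{Mor}(\mathbb{P}^1,X,\alpha)$; in particular $\dim W = \dim W'$, and writing $s \colon \mathbb{P}^1 \times W' \to X$ for the restriction of $\mathrm{ev}$, the irreducible subvariety $Z := \overline{s(\mathbb{P}^1 \times W')}$ coincides with the closure of the union of the curves parametrized by $W$. Since $W \subseteq \mathrm{Mor}_U(\mathbb{P}^1,X,\alpha)$ parametrizes curves meeting $U$, we get $Z \cap U \neq \emptyset$.

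The key step is to show $Z = X$. Suppose not. Since $W'$ is a component of $\mathrm{Mor}(\mathbb{P}^1,X,\alpha)$ with $(K_X + a(X,L)L) \cdot \alpha = 0$, Proposition~\ref{prop:expdim}(2) applies and gives $a(Z, L|_Z) > a(X,L)$. But $U$ is by definition the complement of the closure of all subvarieties with strictly larger $a$-invariant, so $Z \subseteq X \setminus U$, contradicting $Z \cap U \neq \emptyset$. Therefore $Z = X$, i.e.\ $W$ (equivalently $W'$) parametrizes a dominant family of rational curves.

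It remains to pin down the dimension. Every component of $\mathrm{Mor}(\mathbb{P}^1,X,\alpha)$ has dimension at least $-K_X \cdot \alpha + \dim X$, with equality when the family is dominant, by \cite{Kollar}. Since $Z = X$, we obtain $\dim W = \dim W' = -K_X \cdot \alpha + \dim X$. As this argument applies to every irreducible component of $\mathrm{Mor}_U(\mathbb{P}^1,X,\alpha)$ and all of them parametrize curves of the fixed class $\alpha$, we conclude simultaneously that $\dim \mathrm{Mor}_U(\mathbb{P}^1,X,\alpha) = -K_X \cdot \alpha + \dim X$ and that every component parametrizes a dominant family.

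I do not anticipate a serious obstacle here: the substantive content is already contained in Proposition~\ref{prop:expdim}, whose proof is the non-pseudo-effectivity argument via deformations of curves, and in Theorem~\ref{theo:boundedavalues}, which guarantees that $X\setminus U$ is a proper closed subset so that the hypothesis $\mathrm{Mor}_U(\mathbb{P}^1,X,\alpha) \neq \emptyset$ is not vacuous. The only points requiring care are the bookkeeping that a component of $\mathrm{Mor}_U$ determines a genuine irreducible subvariety $Z$ to which Proposition~\ref{prop:expdim}(2) applies, and the elementary observation that a curve meeting $U$ cannot be contained in any subvariety of strictly larger $a$-invariant.
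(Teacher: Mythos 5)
Your proposal is correct and follows essentially the same route as the paper: the paper's proof is precisely ``invoke Theorem~\ref{theo:boundedavalues} so that the exceptional locus is a proper closed set, then apply Proposition~\ref{prop:expdim},'' and your argument is a careful expansion of exactly that, using part (2) to force $Z=X$ for any component whose curves meet $U$ and the standard dimension equality for dominant families to conclude. The only detail you add beyond the paper is the (correct) verification that $\mathrm{Mor}_U$ is open in $\mathrm{Mor}(\mathbb{P}^1,X,\alpha)$ via openness of the projection, which is worth making explicit.
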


\begin{proof}
By Theorem~\ref{theo:boundedavalues} there is a closed proper subset $V \subset X$ such that for every $Z \not \subset V$, $a(Z,L) \leq a(X,L)$.  Then apply Proposition \ref{prop:expdim}.
\end{proof}

\begin{rema}
\label{rema:non-nefclass}
Our proof actually shows that for any $\alpha \in \overline{\mathrm{Eff}}_1(X)_{\mathbb Z} \setminus \Nef_{1}(X)_{\mathbb{Z}}$, the space $\mathrm{Mor}_U(\mathbb P^1, X, \alpha)$ is empty. Indeed, suppose that it is not empty. Then there is an irreducible curve $C$ parametrized by $\mathrm{Mor}_U(\mathbb P^1, X, \alpha)$. Let $Z$ be the subvariety covered by deformations of $C$. Since $U\cap Z \neq \emptyset$, we have $a(Z, L|_{Z})\leq a(X, L)$. By Proposition \ref{prop:expdim}, $C$ must deform to cover $X$, i.e., $X=Z$. This means that $\alpha \in \Nef_{1}(X)_{\mathbb{Z}}$, a contradiction.
\end{rema}

The most compelling special case is:

\begin{theo} \label{theo:fanocase}
Let $X$ be a smooth projective weak Fano variety.  Let $U$ be the Zariski open subset which is the complement of the closure of all subvarieties $Y \subset X$ satisfying $a(Y,-K_{X}|_{Y}) > a(X,-K_{X})$. 
Then for any $\alpha \in \Nef_{1}(X)_{\mathbb{Z}}$ we have
\[
\dim \mathrm{Mor}_U(\mathbb P^1, X, \alpha) = -K_X \cdot \alpha + \dim X
\]
if it is not empty, and every component of $\dim \mathrm{Mor}_U(\mathbb P^1, X, \alpha)$ parametrizes a dominant family of rational curves. 
\end{theo}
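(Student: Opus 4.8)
The plan is to deduce Theorem~\ref{theo:fanocase} as an essentially immediate corollary of Theorem~\ref{theo:closednotfree}. The key point is that for a weak Fano variety with $L = -K_X$, the $a$-invariant is $a(X,-K_X) = 1$ and the face $F(X,-K_X)$ of $\Nef_1(X)$ cut out by $(K_X + a(X,L)L)\cdot\alpha = 0$ is \emph{all} of $\Nef_1(X)$, since $K_X + a(X,L)L = K_X + (-K_X) = 0$ as a numerical class. So the hypothesis $(K_X + a(X,L)L)\cdot\alpha = 0$ appearing in Theorem~\ref{theo:closednotfree} is automatically satisfied by every $\alpha \in \Nef_1(X)_{\mathbb{Z}}$, and the restriction disappears.

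First I would record that $-K_X$ is big and nef: nef is the definition of weak Fano, and bigness follows since $(-K_X)^{\dim X} > 0$ for a weak Fano (or one can cite that weak Fano implies $-K_X$ big). Thus $L = -K_X$ is a legitimate choice of big and nef $\mathbb{Q}$-divisor, and Definition~\ref{defi: Fujita invariant} applies. Second, I would observe $a(X,-K_X) = \min\{t \in \mathbb{R} \mid t[-K_X] + [K_X] \in \Eff^1(X)\}$; taking $t = 1$ gives the zero class, which lies in $\Eff^1(X)$, and for $t < 1$ we get $(t-1)[-K_X] = (1-t)[K_X]$, which is not pseudo-effective because $X$ is uniruled (hence $K_X \notin \Eff^1(X)$ by \cite{BDPP}) — so $a(X,-K_X) = 1$ exactly. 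Third, with $a(X,L)L + K_X = -K_X + K_X \equiv 0$, the condition $(K_X + a(X,L)L)\cdot\alpha = 0$ holds for every curve class $\alpha$, in particular every $\alpha \in \Nef_1(X)_{\mathbb{Z}}$.

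Finally I would invoke Theorem~\ref{theo:closednotfree} directly: the open set $U$ there is the complement of the closure of all subvarieties $Y$ with $a(Y, L|_Y) > a(X,L)$, which with $L = -K_X$ is precisely the $U$ defined in the statement of Theorem~\ref{theo:fanocase}. Hence for every $\alpha \in \Nef_1(X)_{\mathbb{Z}}$, if $\mathrm{Mor}_U(\mathbb{P}^1, X, \alpha)$ is nonempty then $\dim \mathrm{Mor}_U(\mathbb{P}^1, X, \alpha) = -K_X \cdot \alpha + \dim X$ and every component parametrizes a dominant family of rational curves. This completes the proof.

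I do not anticipate a real obstacle here — the content is entirely in Theorem~\ref{theo:closednotfree} (and upstream in Theorem~\ref{theo:boundedavalues} and Proposition~\ref{prop:expdim}), and this statement is the specialization that removes the auxiliary numerical hypothesis. The only point requiring a moment's care is the justification that $a(X,-K_X) = 1$, which as noted uses uniruledness of $X$ to rule out pseudo-effectivity of $K_X$; everything else is formal.
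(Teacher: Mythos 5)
Your proposal is correct and matches the paper's intent exactly: the paper states Theorem~\ref{theo:fanocase} as an immediate special case of Theorem~\ref{theo:closednotfree}, and your verification that $a(X,-K_X)=1$ (via uniruledness and \cite{BDPP}) and that $K_X + a(X,-K_X)(-K_X) \equiv 0$ makes the numerical hypothesis vacuous is precisely the specialization required. No gaps.
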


\begin{exam}
If $X$ is not Fano, it is of course possible that non-dominant families of rational curves sweep out a countable collection of proper subvarieties, as in the blow-up of $\mathbb{P}^{2}$ at nine very general points.  
\end{exam}

Theorem \ref{theo:closednotfree} gives a new set of tools for understanding families of rational curves via adjunction theory.  Hypersurfaces are perhaps the most well-known source of examples of families of rational curves: we have an essentially complete description of the components of the moduli space of rational curves for general Fano hypersurfaces (\cite{HRS04}, \cite{BK13}, \cite{RY16}).  We briefly illustrate Theorem \ref{theo:closednotfree} by discussing results which hold for \emph{all} smooth hypersurfaces of a given degree.

\begin{exam}[\cite{CS09}] \label{exam:cubichypersurface}
Let $X$ be a smooth cubic hypersurface of dimension $n\geq 3$.  Let $H$ denote the hyperplane class on $X$, so that $K_{X} = -(n-1)H$ and $a(X,H) = n-1$.  We show that $X$ does not contain any subvariety with higher $a$-value, so that every family of rational curves has the expected dimension.  This recovers a result of \cite{CS09}.

Let $Y$ be the resolution of a subvariety of $X$.  \cite[Proposition 2.10]{LTT14} shows that the largest possible $a$-invariant for a big and nef divisor on a projective variety $Y$ is $\dim(Y) + 1$.  Thus if $Y$ has codimension $\geq 2$ then $a(Y,H) \leq n-1$.  If $Y$ has codimension $1$, Lemma \ref{lemm:adjunctionresult} shows that $a(Y,H) \leq n-1$ unless the $H$-degree of $Y$ is $1$.  But a smooth cubic hypersurface of dimension $\geq 3$ can not contain any codimension $1$ linear spaces, showing the claim.
\end{exam}

To our knowledge the following example has not been worked out explicitly in the literature.

\begin{exam} \label{exam:quartichypersurface}
Let $X$ be a smooth quartic hypersurface of dimension $n \geq 5$.  Let $H$ denote the hyperplane class on $X$, so that $a(X,H) = n-2$.  We prove that $X$ does not contain any subvariety with higher $a$-value, so that every family of rational curves has the expected dimension.  Suppose that $Y \subset X$ is a subvariety of codimension $\geq 3$.  Just as in Example \ref{exam:cubichypersurface}, we can immediately deduce that $a(Y,H) \leq a(X,H)$.

Next suppose that $Y \subset X$ has codimension $2$.  Applying Lemma \ref{lemm:adjunctionresult}, we see that $a(Y,H) \leq a(X,H)$ unless possibly if $H$ is a linear codimension $2$ space.  But this is impossible in our dimension range.

Finally, suppose there were a divisor $Y \subset X$ satisfying $a(Y,H) > a(X,H)$.  
If $\kappa(K_{Y} + a(Y,H)H) > 0$, then by \cite[Theorem 4.5]{LTT14} $Y$ is covered by subvarieties of smaller dimension with the same $a$-value, an impossibility by the argument above.  If $\kappa(K_{Y} + a(Y,H)H) = 0$, we may apply Lemma \ref{lemm:abiggerthanr-1} to see that $H|_{Y}^{r-1} \leq 4$.  By the Lefschetz hyperplane theorem, the only possibility is that $Y$ is the intersection of $X$ with a hyperplane and $H|_{Y}^{r-1}=4$.  Let $\widetilde{Y}$ denote a resolution of $Y$ and let $S$ be a surface which is a general complete intersection of members of $H$ on $\widetilde{Y}$.  Again applying Lemma \ref{lemm:abiggerthanr-1}, we see that the morphism defined by a sufficiently high multiple of $H|_{S}$ should define a map to $\mathbb{P}^{2}$.  In our situation it defines a map to a (possibly singular) reduced irreducible quartic surface, a contradiction.  (Note that this singular quartic must be normal because of \cite[Lemma 6.14]{LTT14}.) Thus $a(Y,H) \leq a(X,H)$ in every case.
\end{exam}

\section{Number of components} \label{section:numberofcomponents}
In this section we study the following conjecture of Batyrev:

\begin{conj}[Batyrev's conjecture] \label{conj:numberofcomponents}
Let $X$ be a smooth projective uniruled variety and let $L$ be a big and nef $\mathbb{Q}$-divisor on $X$.  For a numerical class $\alpha \in \Nef_{1}(X)_{\mathbb{Z}}$, let $h(\alpha)$ denote the number of components of $\mathrm{Mor}(\mathbb P^1, X, \alpha)$ that generically parametrize free curves. There is a polynomial $P(d) \in \mathbb{Z}[d]$ such that $h(\alpha) \leq P(L \cdot \alpha)$ for all $\alpha$.
\end{conj}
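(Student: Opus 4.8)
The plan is to reduce Batyrev's conjecture to a combinatorial count, following the strategy behind Theorem~\ref{theo:polynomialbounds}; the full statement amounts to removing the extra hypothesis of that theorem, and that is the essential difficulty.

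\textbf{Reduction to the Kontsevich space.} First I would pass from $\mathrm{Mor}(\mathbb{P}^1,X,\alpha)$ to $\overline{M}_{0,0}(X)$. A component of $\mathrm{Mor}(\mathbb{P}^1,X,\alpha)$ whose general member is free determines, after quotienting by the reparametrization action of $\mathrm{PGL}_2$ on the source, a component of $\overline{M}_{0,0}(X,\alpha)$ with free general member of irreducible domain; several $\mathrm{Mor}$-components (e.g.\ multiple covers) can produce the same $\overline{M}_{0,0}$-component, but only boundedly many for a fixed class. So it suffices to bound, by a polynomial in $d = L\cdot\alpha$, the number of components of $\overline{M}_{0,0}(X,\beta)$ with $L\cdot\beta \le d$ whose general member is free.

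\textbf{The breaking step.} The crux is to produce a constant $q=q(X,L)$ so that every component $M$ of $\overline{M}_{0,0}(X)$ with free general member of $L$-degree $>q$ contains a stable map whose domain is a \emph{comb}: a spine $\mathbb{P}^1$ mapping to a free curve of $L$-degree $\le q$, with teeth each mapping to a free curve of $L$-degree $\le q$. One would try to build this by iterating ``attach a general free curve through a general point and smooth,'' the obstruction being an $H^1$ of the restricted tangent bundle twisted down at the node. The trouble is that large $L$-degree does not by itself force the positivity needed to split off a bounded-degree free piece, since $f^*T_X$ can be very unbalanced; this is exactly why the paper settles only for the conditional Theorem~\ref{theo:polynomialbounds} together with verifications in low dimension. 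I expect this to be the main obstacle.

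\textbf{The combinatorial count.} Granting the breaking step, conclude as follows. Free curves of $L$-degree $\le q$ have bounded $A$-degree for $A$ ample with $L - A$ effective, hence form a bounded family and lie in finitely many components $M_1,\dots,M_N$ of $\overline{M}_{0,0}(X)$. In a comb the attachment points of the teeth move freely over $\mathbb{P}^1$, so permuting the teeth stays in one component of $\overline{M}_{0,0}(X)$; hence the component obtained by smoothing a comb depends only on the unordered data of which $M_i$ the spine lies in and the multiset of $M_j$'s used as teeth. Since every free component of $L$-degree $> q$ contains such a comb, the number of free components of $L$-degree exactly $d$ is at most the number of such unordered data of total $L$-degree $d$, which is $O(d^{N})$; summing over $L$-degrees $\le d$ keeps the bound polynomial and gives $h(\alpha) \le P(L\cdot\alpha)$. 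Along the way one checks the easy points: non-free components are excluded by the definition of $h(\alpha)$, the $\mathrm{Mor}$-to-$\overline{M}_{0,0}$ correspondence is finite in the needed direction, and the spine and teeth can all be taken free of $L$-degree $\le q$ after enlarging $q$.
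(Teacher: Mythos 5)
The statement you were asked about is stated in the paper as a \emph{conjecture} (Batyrev's conjecture), and the paper does not prove it; it only establishes the conditional version, Theorem~\ref{theo:polybounds}, for components containing a chain of free curves of bounded degree, plus verifications for specific threefolds. Your proposal does not close this gap either, and to your credit you say so explicitly: the ``breaking step'' --- that every component of $\overline{M}_{0,0}(X)$ generically parametrizing free curves of large $L$-degree contains a chain (or comb) of free curves of degree bounded by a uniform $q(X,L)$ --- is precisely the missing ingredient, and it is genuinely open. The standard comb-smoothing technique produces new free curves from old ones but does not show that an \emph{arbitrary} free component of high degree degenerates to such a configuration; as you note, $f^*T_X$ being free gives no control forcing a bounded-degree free piece to split off. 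So the proposal is a (correct) reduction of the conjecture to an open problem, not a proof.

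There is also a secondary gap in your combinatorial endgame that you should be aware of, because the paper spends most of the proof of Theorem~\ref{theo:polybounds} on exactly this point. Granting the breaking step, it is \emph{not} true that the component obtained by smoothing a chain depends only on the multiset of components $M_i$ of the pieces. The parameter space of chains of a fixed combinatorial type, $M'_{1} \times_X M''_{2} \times_X \cdots \times_X M'_{r}$, can have several main components whenever the evaluation maps $M_i' \to X$ have Stein factorization of degree $>1$ (i.e.\ when there are $a$-covers $Z \to X$ through which the universal families factor), and these main components may smooth to distinct components of $\overline{M}_{0,0}(X)$ --- this is the mechanism behind the polynomial growth in the $\mathrm{Hilb}^2(\mathbb{P}^1\times\mathbb{P}^1)$ example. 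The paper handles this by Observation~\ref{obse:irreducibility} (irreducibility of the fiber product only when the Stein degree is $1$), Lemma~\ref{lemm:finitenessinsmalldegree} (finiteness of the relevant covers $Z$), and an induction on the maximal Stein degree $e$, which is why the final bound is a sum over covers $Z$ of polynomials $P_Z$. Your count of unordered multisets would have to be corrected by this bookkeeping; it still yields a polynomial bound, but the argument as you wrote it is incomplete even modulo the breaking step.
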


We prove a polynomial upper bound for components satisfying certain extra assumptions.  We will also give a conjectural framework for understanding the number of components that is motivated by Manin's Conjecture.


\subsection{Conjectural framework}

We expect that polynomial growth as in Conjecture \ref{conj:numberofcomponents} should arise from dominant maps $f: Y \to X$ which are face contracting.  In this case there will be many nef curve classes on $Y$ which are identified under pushforward to $X$, yielding many different components of the space of morphisms.  

\begin{exam}
We use an example considered by \cite{LeRudulier} in the number theoretic setting.  Set $S = \mathbb{P}^{1} \times \mathbb{P}^{1}$ and let $X = \mathrm{Hilb}^{2}(S)$, a weak Fano variety of dimension $4$.  Let $Y$ denote the blow up of $S \times S$ along the diagonal and let $f: Y \to X$ denote the natural $2:1$ map.  Note that $f$ breaks the weakly balanced condition for $-K_{X}$: we have
\begin{equation*}
(a(X,-K_{X}),b(X,-K_{X})) = (1,3) < (1,4) = (a(Y,-f^{*}K_{X}),b(Y,-f^{*}K_{X}))
\end{equation*}
As discussed in Section \ref{absection}, $f$ naturally defines a contraction of faces $F(Y,f^{*}L) \to F(X,L)$ of the nef cone of curves.  Here $F(Y,f^{*}L)$ consists of curves which have vanishing intersection against the blow-up $E$ of the diagonal.  This face has dimension $4$, since it contains the classes of strict transforms of curves on $S \times S$ which do not intersect the diagonal.  Its image $F(X,L)$ is the cone spanned by the classes of the curves $F_{1}(1,2)$ and $F_{2}(1,2)$.  (Here $F_{1}(1,2)$ denotes the curve parametrizing length two subschemes of $S$ where one point is fixed and the other varies in a fiber of the first projection.  $F_{2}(1,2)$ is defined analogously for the second projection.)  Note that $f_{*}$ decreases the dimension of $F(Y,f^{*}L)$ by $2$.

It is easy to see that a dominant component of rational curves on $Y$ with class $\beta \in F(Y,f^{*}L)$ is the strict transform of a dominant component of rational curves on $S \times S$.  Since $S \times S$ is toric, there is exactly one irreducible component of each class $\beta$.  Suppose that $\beta$ is the strict transform of a degree $(a,b,c,d)$ curve class on $(\mathbb{P}^{1})^{\times 4}$.  The pushforward identifies all classes with $a+c=m$ and $b+d=n$ to the class $mF_{1}(1,2) + nF_{2}(1,2)$.  The pushforward of any component of $\mathrm{Mor}(\mathbb{P}^{1},Y)$ with a class in $F(Y,f^{*}L)$ yields (a dense subset of) a component of $\mathrm{Mor}(\mathbb{P}^{1},X)$ since the expected dimensions coincide.  Furthermore, a component of class $mF_{1}(1,2) + nF_{2}(1,2)$ will be the image of exactly two different components on $Y$ (given by $(a,b,m-a,n-d)$ and $(m-a,n-d,a,b)$) except when $m$ and $n$ are both even and $a=c$ and $b=d$, in which case there is only one component.  
Thus there are at least $\lceil \frac{1}{2}(m+1)(n+1) \rceil $ different components of rational curves of class $mF_{1}(1,2) + nF_{2}(1,2)$.  Since any rational curve on $X$ avoiding $E$ is the pushforward of a rational curve on $Y$, this is in fact the exact number.
\end{exam}

In the previous example the growth in components was caused by the existence of a dominant map $f: Y \to X$ breaking the weakly balanced condition.  But even when there is no such map we can still have growth of components due to the existence of face  contracting maps.

\begin{exam} \label{facecontracting2}
Let $X$ be the smooth weak del Pezzo surface in Example \ref{exam:facecontracting}; we retain the notation from this example.  We claim that there are families of free rational curves representing two linearly independent classes in $F(Y,f^{*}L)$.  Using a gluing argument, one can then deduce that the number of components of free rational curves for classes in $F$ grows at least linearly as the degree increases.

For each generator of $F(Y,f^{*}L)$ we can run an MMP to obtain a Mori fibration $\pi: \tilde{Y} \to Z$ on a birational model of $Y$ which contracts this ray.  If $\dim(Z)=1$, then a general fiber will be in the smooth locus of $\tilde{Y}$ and its pullback on $Y$ will be a free rational curve of the desired numerical class.  If $\dim(Z)=0$, we can apply \cite[1.3 Theorem]{KM99} to find a rational curve in the smooth locus of $\tilde{Y}$ whose pullback to $Y$ will be a free rational curve of the desired numerical class.
\end{exam}

As in the previous examples, one can expect the degree of the polynomial $P(d)$ in Conjecture \ref{conj:numberofcomponents} to be controlled by the relative dimension of contracted faces.

\begin{conj} \label{conj:binvandcomponents}
Let $X$ be a smooth projective weak Fano variety.  For a numerical class $\alpha \in \Nef_{1}(X)_{\mathbb{Z}}$, let $h(\alpha)$ denote the number of components of $\mathrm{Mor}(\mathbb P^1, X, \alpha)$ that generically parametrize free curves.  Then $h(m\alpha)$, considered as a function of $m$, is bounded above by a polynomial $P(m)$ whose degree is the largest relative dimension of a map $f_{*}: F(Y,f^{*}L) \to F$ where $f$ is a face contracting morphism $f: Y \to X$, $F$ denotes the image of $F(Y,f^{*}L)$, and $\alpha \in F$.
\end{conj}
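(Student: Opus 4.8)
The plan is to convert the enumeration of components of free rational curves into an Ehrhart-type lattice-point count on the faces $F(Y,f^{*}L)$ attached to the face contracting covers of $X$, using the gluing theory for free curves together with the $a$- and $b$-invariant machinery of Section~\ref{absection}. Throughout one takes $L=-K_{X}$, so that $a(X,L)=1$ and $F(X,L)=\Nef_{1}(X)$; in particular every $\alpha$ occurring in the statement automatically lies in $F(X,L)$, and for a face contracting cover $f\colon Y\to X$ the face $F(Y,f^{*}L)$ is exactly the set of nef classes on $Y$ orthogonal to the relative ramification $K_{Y}-f^{*}K_{X}$.

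The first step — and the genuinely hard one — is to show that there is a constant $q_{0}=q_{0}(X)$ such that every component of $\mathrm{Mor}(\mathbb{P}^{1},X)$ generically parametrizing free curves contains, in its closure inside $\overline{M}_{0,0}(X)$, a stable map which is a chain of free curves each of $L$-degree at most $q_{0}$. This is the ``movable bend and break'' phenomenon flagged in the introduction: it is known for several families but open in general, and it is the principal obstacle to turning the conjecture into a theorem. Granting it, Theorem~\ref{theo:polynomialbounds} already yields \emph{some} polynomial bound, so the remaining work is to pin down the correct degree.

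Next one gives a combinatorial model for the components. Let $\mathcal{G}$ be the finite set consisting of the components of $\mathrm{Mor}(\mathbb{P}^{1},X)$ generically parametrizing free curves of $L$-degree at most $q_{0}$, together with the analogous finite collections on each face contracting cover of $X$ (the set of relevant covers is finite by Theorem~\ref{theo:boundedavalues} and the classification of $a$-covers); by Theorem~\ref{theo:closednotfree} every member of $\mathcal{G}$ parametrizes a dominant family of the expected dimension. Using that the node-gluing of two free curves smooths inside $\overline{M}_{0,0}$ to a dominant family (\cite{Kollar}), one shows that the component of free curves containing a given chain depends only on the chain up to a combinatorial equivalence, and — this is the conceptual heart, visible already in Example~\ref{exam:facecontracting} and in the $\mathrm{Hilb}^{2}(\mathbb{P}^{1}\times\mathbb{P}^{1})$ discussion — that this equivalence is generated precisely by the identifications coming from face contracting covers: two chains of total class $\beta$ give the same component of free curves on $X$ exactly when they are pushforwards of chains on a common face contracting cover $f\colon Y\to X$ that differ by a deck transformation, and recursively by the same relation upstairs on $Y$. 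Hence the components of free curves of class $m\alpha$ are indexed by the finite-group orbits of lattice points $\gamma\in F(Y,f^{*}L)$ with $f_{*}\gamma=m\alpha$, as $(f,Y)$ ranges over the face contracting covers having $\alpha$ in the image face $F:=f_{*}F(Y,f^{*}L)$.

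Finally one counts. For a fixed such cover $f\colon Y\to X$, the set $f_{*}^{-1}(\alpha)\cap F(Y,f^{*}L)$ consists of the lattice points of a rational polytope, and since $F(Y,f^{*}L)$ is a cone the fiber over $m\alpha$ is precisely its $m$-th dilate; by Ehrhart's theorem this dilate contains $O(m^{k})$ lattice points, where $k=\dim F(Y,f^{*}L)-\dim F$ is the relative dimension of $f_{*}$ on that face. Summing over the finitely many relevant covers and dividing by the (bounded) deck-group orders gives $h(m\alpha)=O(m^{K})$ with $K$ the largest such relative dimension, which is exactly the asserted degree; when no face contracting cover has $\alpha$ in its image face one gets $K=0$ — the identity map is excluded since it is not thin — so $h(m\alpha)$ is bounded, matching $\mathbb{P}^{n}$ and the general del Pezzo surface. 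Besides this first step, the remaining delicate point is the assertion that face contraction is the \emph{only} mechanism producing coincidences among components of free curves; establishing this requires controlling the interaction between the free and non-free loci of $\overline{M}_{0,0}(X)$ and is closely tied to the thin-set philosophy underlying the geometric Manin's Conjecture.
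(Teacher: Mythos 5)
The statement you are addressing is stated in the paper as a conjecture (Conjecture \ref{conj:binvandcomponents}) and is not proved there: the paper establishes only (i) a polynomial upper bound of \emph{unspecified} degree for components containing chains of free curves drawn from a fixed finite set (Theorem \ref{theo:polybounds}), and (ii) a \emph{lower} bound of the conjectured degree, conditional on Conjecture \ref{conj: rational} (Proposition \ref{prop:numbercompandbvalues}). Your proposal is a sensible strategy consistent with the paper's heuristic framework, but it is not a proof, because it rests on two assertions that you yourself flag and that are genuinely open. The first is the ``movable bend and break'' statement that every component of free curves contains a chain of free curves of degree bounded by a constant $q_0(X)$; the paper verifies this only in special cases (e.g.\ Theorem \ref{theo:polygrowthofmbar} for index-two threefolds) and explicitly notes that it seems subtle in general. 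The second is the claim that the only identifications among chains --- i.e.\ the only way two chains of free curves of the same total class can lie in the same component of $\overline{M}_{0,0}(X)$ --- come from deck transformations of face contracting covers. Nothing in the paper or in your argument establishes this; the paper's machinery (Lemma \ref{lemm: chains}, Lemma \ref{lemm:reorderingcomponents}, Observation \ref{obse:irreducibility}) produces identifications from reordering and from degree-one Stein factorizations, and the resulting upper bound in Theorem \ref{theo:polybounds} has degree governed by $\mathrm{rank}(\Lambda/R)$ as in Section \ref{subsec: breaking chains}, not by the relative dimension of any face contraction. Bridging $\mathrm{rank}(\Lambda/R)$ with the largest relative dimension of $f_*: F(Y,f^{*}L)\to F$ is a third gap your write-up does not close: a priori there could be coincidences or non-coincidences of numerical classes among the low-degree generators that are unrelated to any cover, and your Ehrhart count silently assumes there are none.

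What your proposal does capture correctly, and what matches the paper, is the lower-bound mechanism: for a face contracting $f: Y \to X$, the lattice points of $F(Y,f^{*}L)$ lying over $m\alpha$ grow like $m^{k}$ with $k$ the relative dimension, and (granting Conjecture \ref{conj: rational}) each yields a component downstairs up to a factor bounded by $\deg f$; this is exactly Proposition \ref{prop:numbercompandbvalues}. So your Ehrhart argument explains why the conjectured degree cannot be lowered, but the upper bound --- the actual content of the conjecture --- remains open even if one grants your first step.
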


\subsection{Breaking chains of free curves}
\label{subsec: breaking chains}

In this section we prove some structure theorems for chains of free curves.  We will pass from working with the spaces $\mathrm{Mor}(\mathbb{P}^{1},X)$ to the Kontsevich spaces of stable maps $\overline{\mathcal{M}}_{0,n}(X,\beta)$.  Note that this change in setting drops the expected dimension of spaces of rational curves by $3$.  We will assume familiarity with these spaces as in \cite{BM96}, \cite{HRS04}.  In fact, we will work exclusively with the projective coarse moduli space $\overline{M}_{0,n}(X,\beta)$.

Let $X$ be a smooth projective uniruled variety and let $L$ be a big and nef $\mathbb{Q}$-divisor on $X$.  By a component of $\overline{M}_{0,0}(X)$, we will mean more precisely the reduced variety underlying some component of this projective scheme.  For each component $M \subset \overline{M}_{0,0}(X)$ which generically parametrizes free curves, we denote by $M'$ the unique component of $\overline{M}_{0,1}(X)$ parametrizing a point on a curve from $M$, and by $M''$ the analogous component of $\overline{M}_{0,2}(X)$.

\begin{defi}
A chain of free curves on $X$ of length $r$ is a stable map $f: C \to X$ such that $C$ is a chain of rational curves with $r$ components and the restriction of $f$ to any component $C_{i}$ realizes $C_{i}$ as a free curve on $X$.
\end{defi}

We can parametrize chains of free curves (coming from components $M_{1},\ldots,M_{r}$) by the product
\begin{equation*}
M'_{1} \times_{X} M''_{2} \times_{X} \ldots \times_{X} M''_{r-1} \times_{X} M'_{r}.
\end{equation*}
Of course such a product might also have components which do not generically parametrize chains of free curves.  We will use the following notation to distinguish between the two types of component.

\begin{defi}
Given a fiber product as above, a ``main component'' of the product is any component which dominates the parameter spaces $M_{i}''$, $M_{1}'$, and $M_{r}'$ under each projection map.
\end{defi}

Loosely speaking our goal is to count such main components.  Note that any component of $M'_{1} \times_{X} \ldots \times_{X} M'_{r}$ which generically parametrizes chains of free curves will have the expected dimension $-K_{X}\cdot C + \dim(X) -2 - r$.  A chain of free curves is automatically a smooth point of $\overline{M}_{0,0}(X)$.

For a component $M_{i}$ which generically parametrizes free curves, we let $U_{i}$ denote the sublocus of free curves.  Analogously, we define $U'_{i}$ and $U''_{i}$ for the one or two pointed versions.

\begin{lemm} \label{lemm: equifibers}
Consider an open component of chains of free curves with a marked point on each end, that is,
\begin{equation*}
N \subset U''_{1} \times_{X} U''_{2} \times_{X} \ldots \times_{X} U''_{r}.
\end{equation*}
Then each projection map $N \to U''_{j}$ is dominant and flat.  Furthermore, the map $N \to X$ induced by the last marked point is dominant and flat.
\end{lemm}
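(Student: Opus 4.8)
The plan is to prove both statements by induction on the length $r$ of the chain, using the fact (recalled before Lemma \ref{lemm: equifibers}) that a chain of free curves is a smooth point of the relevant moduli space and that each constituent component $U_i$ parametrizes free curves, so the evaluation maps behave as well as possible. First I would treat the base case $r=1$: here $N$ is an open subset of $U_1''$, the projection $N\to U_1''$ is the identity (or an open immersion), and the map to $X$ given by a marked point is the evaluation map from a component of free curves, which is well known to be smooth — hence flat and dominant — because a free curve has globally generated normal bundle and the fiber of the evaluation has the expected dimension. This handles the ``furthermore'' clause in the base case.

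For the inductive step I would write $N \subset N_{<r} \times_X U_r''$ where $N_{<r}$ is (a component of) the chain of the first $r-1$ curves with a marked point on each end; by induction the projection $N_{<r} \to U_{r-1}''$ via the \emph{last} marked point is dominant and flat, and the evaluation of that last marked point $N_{<r} \to X$ is dominant and flat. The key local computation is that $N$ is the fiber product of $N_{<r} \to X$ (last point of the $(r-1)$-chain) with $U_r'' \to X$ (first point of the $r$-th curve), and both maps to $X$ are flat with $X$ smooth; hence the fiber product is flat over each factor, and in particular $N \to U_r''$ and $N \to N_{<r}$ are flat. Dominance of $N \to U_j''$ for $j<r$ then follows by composing $N \to N_{<r}$ (flat, hence open, and surjective onto the main component by definition of ``main component'') with the inductive dominance $N_{<r}\to U_j''$. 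For the evaluation at the final marked point $N \to X$: this factors as $N \to U_r'' \to X$ where the second map is the evaluation from a component of free curves, which again is smooth, so flat and dominant; composing with the flat dominant $N \to U_r''$ gives the claim. One should also check that the first marked point on the $r=1$ end behaves symmetrically, but that is the same argument run from the other end, or simply a consequence of the chain structure being symmetric.

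The step I expect to be the main obstacle is verifying \emph{flatness} of the fiber product maps rigorously — specifically, that taking fiber product over the smooth variety $X$ of two flat morphisms yields something flat over the factors. The clean way is: if $g: A \to X$ is flat and $X$ is smooth (hence regular), and $h: B \to X$ is flat, then $A \times_X B \to B$ is flat (base change of the flat map $g$). So the real content is just establishing at each stage that the \emph{evaluation maps themselves} are flat, which reduces to smoothness of $X$ together with the fact that over the main component every fiber of the relevant evaluation map has the expected (constant) dimension — the latter being where freeness of the curves and the dimension count $-K_X\cdot C + \dim X - 2 - r$ recalled above are used, via ``miracle flatness'' (a morphism from a Cohen--Macaulay scheme to a regular scheme with equidimensional fibers is flat). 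I would need to be a little careful that $N$, being an open subset of a component of a fiber product of smooth-point loci, is itself Cohen--Macaulay (indeed it is smooth at a generic chain of free curves, but one wants this on all of $N$, or one restricts $N$ further to its smooth locus — which suffices since flatness and dominance are generic-type conclusions that propagate once known on a dense open with the right dimension count). Making this bookkeeping precise, rather than any deep geometric input, is the crux.
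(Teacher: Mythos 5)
Your skeleton — induction on the length of the chain, flatness of the evaluation map at free curves for the base case, and base change of flat morphisms over the smooth variety $X$ for the inductive step — is the same as the paper's. But there is a genuine gap at the dominance step, and it sits exactly where the content of the lemma lies.

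You identify $N$ with the full fiber product $N_{<r} \times_X U''_r$ and justify dominance of $N \to N_{<r}$ by saying it is ``surjective onto the main component by definition of main component.'' Neither move is available. First, $N$ is only one irreducible component of $Q \times_X U''_r$ (where $Q$ is the component of $U''_1 \times_X \cdots \times_X U''_{r-1}$ that $N$ projects into), and this fiber product may be reducible; base change gives flatness of the projection from the \emph{whole} fiber product, not automatically of its restriction to a single component, and a single component could a priori fail to dominate a factor. Second, ``main component'' is defined only for the closed products $M'_1 \times_X M''_2 \times_X \cdots$ and is a \emph{property} (dominating each factor) that must be verified; the lemma is precisely the assertion that for the open loci $U''_i$ of free curves \emph{every} component automatically has this property, so invoking the definition is circular. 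The paper closes this gap with a dimension count: every component of $Q \times_X U''_r$ parametrizes chains of free curves and hence has the same (expected) dimension; the fibers of $Q \times_X U''_r \to Q$ are equidimensional, being base changes of fibers of the flat map $U''_r \to X$; therefore if one component dominates $Q$ they all do, and at least one does because a general chain in $Q$ can be glued to a free curve from $U_r$. Some such argument is needed before your induction can run for $j < r$.

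A secondary point: flatness is not a ``generic-type conclusion that propagates from a dense open,'' so your fallback of restricting to the smooth locus does not, as stated, yield the lemma. What actually rescues the restriction-to-a-component step is that every point of $U''_1 \times_X \cdots \times_X U''_r$ is a chain of free curves and hence a smooth point, so the irreducible components are disjoint open subschemes on which the flat projections restrict to flat maps. With that observation in place of your Cohen--Macaulay bookkeeping, and with the equidimensionality argument above in place of the appeal to ``main components,'' your outline becomes a complete proof along the paper's lines.
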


\begin{proof}
The proof is by induction on the length of the chain.  In the base case of one component, the first statement is obvious.  For the second, note that $U''' \subset \overline{M}_{0,3}(X)$ can be identified with an open set in $\mathrm{Mor}(\mathbb P^1, X)$.  By \cite[Corollary 3.5.4]{Kollar} the evaluation map for the second marked point is flat.  This factors through the natural map $U''' \to U''$; since the forgetful map is faithfully flat, we see that the second statement also holds.

We next prove the induction step.  The projection from $N$ onto the first $n-1$ factors maps $N$ to a component $Q$ of $U''_{1} \times_{X} \ldots \times_{X} U''_{r-1}$.  By induction, $Q$ has the two desired properties.  Also, since the space of free curves through a fixed point has the expected dimension, the map $U''_{r} \to X$ induced by the first marked point is dominant and flat.  Consider the diagram
\begin{equation*}
\xymatrix{Q \times_{X} U''_{r} \ar[r] \ar[d] &  U''_{r} \ar[d]\\
Q \ar[r] & X}
\end{equation*}
Both projections from $Q \times_{X} U''_{r}$ have equidimensional fibers by base change.  Furthermore, every component of $Q \times_{X} U''_{r}$ has the same dimension (as it parametrizes chains of free curves).  Together, this shows that every component of $Q \times_{X} U''_{r}$ will dominate $Q$ so long as there is some component which dominates $Q$.  But it is clear that a general chain of free curves in $Q$ can be attached to a free curve in $U_{r}$, so that the map from $Q \times_{X} U''_{r}$ to $Q$ must be dominant for at least one component.  Noting that $N$ is a component of $Q \times_{X} U''_{r}$ for dimension reasons, we obtain from the induction hypothesis the first statement for $N$ since flatness is stable under base change and composition.  The last statement follows by the same logic.
\end{proof}

\begin{lemm} \label{lemm: familiesthroughpoints}
Consider a component of chains of free curves with a marked point on each end, that is, a main component
\begin{equation*}
N \subset M''_{1} \times_{X} M''_{2} \times_{X} \ldots \times_{X} M''_{r}.
\end{equation*}
Fix a closed subset $Z \subsetneq X$.  Consider the map $f: N \to X$ induced by the first marked point.  For the fiber $F$ of $f$ over a general point of $X$, every component of $F$ generically parametrizes a chain of free curves $C$ such that the map $g: C \to X$ induced by the last marked point does not have image in $Z$.
\end{lemm}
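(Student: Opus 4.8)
The plan is to push everything onto the dense open locus $N^{\circ}\subset N$ parametrizing genuine chains of free curves carrying both marked points, apply Lemma~\ref{lemm: equifibers} there, and then win by a dimension count on the ``bad locus'' $\{$last marked point lies in $Z\}$. Since $N$ is a main component it dominates every factor $M''_i$, so $N^{\circ}$ is nonempty, hence (as $N$ is irreducible) dense in $N$; and $N^{\circ}$ is an irreducible component of $U''_1\times_X\cdots\times_X U''_r$, so Lemma~\ref{lemm: equifibers} applies to it. That lemma gives that the evaluation $\mathrm{ev}\colon N^{\circ}\to X$ at the last marked point is dominant, and, applying it to the same chain with its components listed in the reverse order (a chain of free curves read backwards is again such a chain), that $f|_{N^{\circ}}\colon N^{\circ}\to X$, evaluation at the first marked point, is dominant too. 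In particular $f\colon N\to X$ is dominant; set $e=\dim N-\dim X$.

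\textbf{Step 1: reduction to $N^{\circ}$.} First I would check that for general $x\in X$ every irreducible component of the fibre $F=f^{-1}(x)$ meets $N^{\circ}$ in a dense open subset. By generic flatness there is a dense open of $X$ over which $f$ is flat, and over it $F$ is nonempty of pure dimension $e$. On the other hand $N\setminus N^{\circ}$ is a proper closed subset of the irreducible variety $N$, so each of its finitely many components has dimension $\le\dim N-1$ and therefore meets the general fibre $F$ in dimension $\le e-1$ (when it meets $F$ at all). Comparing dimensions, no component of $F$ can lie inside $N\setminus N^{\circ}$, so $F\cap N^{\circ}$ is dense in every component of $F$; this already yields the first assertion of the lemma, that every component of $F$ generically parametrizes a chain of free curves.

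\textbf{Step 2: the last marked point escapes $Z$.} Set $W=\mathrm{ev}^{-1}(Z)\subset N^{\circ}$. Because $\mathrm{ev}$ is dominant and $Z\subsetneq X$ is closed, $W$ is a \emph{proper} closed subset of the irreducible $N^{\circ}$, so $\dim W\le\dim N-1$; arguing exactly as in Step 1 (decompose $W$ into its finitely many components and shrink away their images in $X$), for general $x$ the intersection $W\cap F$ has dimension $\le e-1$. Now suppose some component $F_i$ of $F$ had the property that the last marked point of every chain it parametrizes lies in $Z$, i.e. $F_i\cap N^{\circ}\subseteq W$. Since $F_i\cap N^{\circ}$ is dense in $F_i$ and $\dim F_i=e$, this forces $e\le\dim(W\cap F)\le e-1$, a contradiction. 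Hence for $x$ in a suitable dense open subset of $X$ and for every component $F_i$ of $F$, the general chain parametrized by $F_i$ is a chain of free curves whose last marked point — and a fortiori whose image — is not contained in $Z$.

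\textbf{Main obstacle.} The genuinely fiddly point is the bookkeeping in Step~1: one must pass to general $x$ not merely to invoke generic flatness (so that $F$ is purely $e$-dimensional and no parasitic lower-dimensional component intervenes) but also to delete the images in $X$ of the finitely many components of $N\setminus N^{\circ}$ and of $W$. Once those dense opens are intersected, everything else is a soft consequence of Lemma~\ref{lemm: equifibers} together with the trivial but decisive observation that $\mathrm{ev}^{-1}(Z)$ is a proper closed subset of $N^{\circ}$.
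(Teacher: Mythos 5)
Your proof is correct, and it takes a more direct route than the paper's. The steps all check out: $N^{\circ}=N\cap\bigl(U''_1\times_X\cdots\times_X U''_r\bigr)$ is nonempty, hence dense open in $N$, because a main component dominates each $M''_i$ and each $U''_i$ is dense open there; $N^{\circ}$ is then a component of the open fiber product, so Lemma~\ref{lemm: equifibers} applies and gives dominance of the evaluations at both ends; every component of a fiber of the dominant map $f$ has dimension at least $e=\dim N-\dim X$, while $N\setminus N^{\circ}$ and the closure of $\mathrm{ev}^{-1}(Z)$ have dimension at most $\dim N-1$ and therefore meet a general fiber in dimension at most $e-1$, so no component of a general fiber can lie inside either locus. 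The paper instead argues by induction on $r$: its base case is essentially your count for a single factor (boundary plus non-free locus have codimension one, with $Z$-avoidance from Lemma~\ref{lemm: equifibers}), and its induction step projects to the first $r-1$ factors, writes the fibers of $Q\times_X M''_r\to X$ as products $F\times_X M''_r$, and applies the inductive hypothesis with $Z$ taken to be the Koll\'ar locus $Z_0$ containing all non-free components of curves in $M_r$, so that the attachment point of the last curve avoids $Z_0$ and the assembled curve is again a chain of free curves --- which is precisely why the $Z$-avoidance clause appears in the statement at all. Your observation that ``being a chain of free curves'' is itself an open condition on the whole fiber product lets a single codimension count replace the entire induction, with the $Z$-clause handled by the same count applied to $\mathrm{ev}^{-1}(Z)$ rather than being threaded through the inductive step. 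Both proofs rest on the same two inputs, namely Lemma~\ref{lemm: equifibers} and the lower bound on fiber dimensions; yours is shorter and avoids the product-decomposition bookkeeping, while the paper's one-factor-at-a-time structure never needs to identify $N^{\circ}$ globally and makes the role of the $Z$-clause in the induction transparent.
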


\begin{proof}
The proof is by induction on the length of the chain.  Consider the base case $f: M''_{1} \to X$.  Since reducible curves form a codimension $1$ locus, every component of a general fiber of $f$ must contain irreducible curves.  Since there is a closed subset of $X$ containing every non-free irreducible curve in $M_{1}$, we see that every component of a general fiber of $F$ must contain free curves.  The ability to avoid $Z$ follows from Lemma \ref{lemm: equifibers}.

We now prove the induction step.  Via projection $N$ maps into a component $Q \subset M''_{1} \times_{X} \ldots \times_{X} M''_{r-1}$.  By induction $Q$ satisfies the desired property.  
By \cite[II.3.5.4 Corollary and II.3.10.1 Corollary]{Kollar} there is a proper closed subset $Z_{0} \subset X$ such that if $C_{0}$ is a component of a curve parametrized by $M''_{r}$ and $C_{0} \not \subset Z_{0}$ then $C_{0}$ is free.  Consider the evaluation along the first marked point (of $Q$) denoted by $\tilde{f}: Q \times_{X} M''_{r} \to X$.  The fibers of this map are products $F \times_{X} M''_{r}$ where $F$ is a fiber of $Q \to X$; choosing the fiber $F$ general with respect to $Z_{0}$, we see that every component of every fiber will contain a chain of free curves.  In particular, this is also true for the map $f: N \to X$ which is a restriction of $\tilde{f}$ to a component.  The ability to avoid $Z$ via the last marked point follows from Lemma \ref{lemm: equifibers}.
\end{proof}

\begin{lemm}
\label{lemm: chains}
Consider a parameter space of chains of free curves, that is, a main component
\begin{equation*}
N \subset M'_{1} \times_{X} M''_{2} \times_{X} \ldots \times_{X} M''_{r-1} \times_{X} M'_{r}.
\end{equation*}
Suppose that curves in $M_{j}$ degenerate into a chain of two free curves in $\widetilde{M}_{j}' \times_{X} \widehat{M}_{j}'$.  Then $N$ contains a main component of 
\begin{equation*}
M'_{1} \times_{X} M''_{2} \times_{X} \ldots \times_{X} M''_{j-1} \times_{X} \widetilde{M}_{j}'' \times_{X} \widehat{M}_{j}'' \times_{X} M_{j+1}'' \times_{X} \ldots \times_{X} M''_{r-1} \times_{X} M'_{r}.
\end{equation*}
\end{lemm}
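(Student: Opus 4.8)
\emph{Proof strategy.} The plan is to realise the refined chain space as a codimension-one boundary stratum of $N$. Write $\beta_i=[M_i]$, $\beta_j=\tilde\beta_j+\hat\beta_j$ with $\tilde\beta_j=[\widetilde M_j]$, $\hat\beta_j=[\widehat M_j]$, and $\beta=\sum_i\beta_i$; I treat the case $1<j<r$, the end cases being identical after replacing the relevant $M''$ by an $M'$. Let $\widetilde P$ denote the refined fiber product in the statement and $P$ the original one, and let $G\colon\widetilde P\to P$ be the morphism (defined on a dense open set) which glues the node between the $\widetilde M_j$- and $\widehat M_j$-curves while keeping their outer marked points — these carry the gluing conditions to the neighbouring factors — and is the identity on the other factors. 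This $G$ is defined wherever the glued curve lands in $M_j''$, it is generically injective, and its image is $p_j^{-1}(D)$, where $p_j\colon P\to M_j''$ is the $j$-th projection and $D\subseteq M_j''$ is the ``locus of broken curves'' produced below; under the resulting identification, the assertion is exactly that $N$ contains a main component of $\widetilde P$.

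\emph{Passing to marked curves.} The hypothesis produces a main component $B_0\subseteq\widetilde M_j'\times_X\widehat M_j'$ whose image under the gluing map into $\overline M_{0,0}(X,\beta_j)$ lies in $\overline{M_j}$. Since $M_j'$ and $M_j''$ are by definition the components of $\overline M_{0,1}(X)$ and $\overline M_{0,2}(X)$ over $M_j$, and gluing commutes with the forgetful maps, there is a main component $B\subseteq\widetilde M_j''\times_X\widehat M_j''$ lying over $B_0$ whose image $D$ under $\gamma\colon\widetilde M_j''\times_X\widehat M_j''\to\overline M_{0,2}(X,\beta_j)$ lies in $\overline{M_j''}$. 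As $B$ parametrises chains of two free curves, unobstructedness of deformations of free curves and submersivity of the evaluation maps give $\dim B=\dim M_j''-1$, while $\gamma|_B$ is generically injective because a general chain of two free curves with marked outer ends determines its two components; thus $D$ is an irreducible divisor in $M_j''$ contained in its boundary, and a local analysis at a general point of $D$ (where $\overline M_{0,2}(X,\beta_j)$ is smooth and looks like $D$ times a node-smoothing parameter) shows that the smoothings of the chains parametrised by $B$ fill out a dense open subset of $M_j''$.

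\emph{The boundary stratum of $N$, the degeneration, and the conclusion.} Because $N$ is a main component of $P$, the projection $p_j|_N\colon N\to M_j''$ is dominant, hence surjective as both varieties are projective; so $p_j|_N^{-1}(D)$ is a nonempty proper closed subset of $N$, and by the fiber-dimension theorem it has a component of dimension $\dim N-1$ dominating $D$. To pin down the correct \emph{main} component, I degenerate as follows. The $j$-th link of a general chain parametrised by $N$ is a general point of $M_j''$ (by main-ness), hence by the previous paragraph it degenerates, together with its two gluing points, to a chain of two free curves parametrised by $B$; carrying out this degeneration while correspondingly deforming the other links — possible because they are free, so their gluing points may be slid freely, and the free loci $U_i'$, $U_i''$ are irreducible — produces a connected family inside $P$ emanating from the free-chain locus of $N$, hence contained in $N$. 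Its special member is a chain $C^\flat_\bullet$ all of whose links are free and whose $j$-th link is a general chain of two free curves. Now a chain of free curves is a smooth point of $P$ (each moduli factor is smooth there and all evaluation maps are submersive), so $C^\flat_\bullet$ lies on a \emph{unique} component of $P$, which therefore is $N$. Un-gluing the $j$-th node, $C^\flat_\bullet$ corresponds to a chain of $r+1$ free curves, a smooth point of $\widetilde P$ (here $B$ agrees locally with $\widetilde M_j''\times_X\widehat M_j''$), lying on a unique component $\widetilde N$; this $\widetilde N$ generically parametrises chains of $r+1$ free curves, so it has the expected dimension $-K_X\cdot\beta+\dim X-2-(r+1)=\dim N-1$ and dominates every factor, i.e.\ it is a main component of $\widetilde P$. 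Finally $G(\widetilde N)$ is an irreducible subvariety of $p_j^{-1}(D)$ of dimension $\dim N-1$ containing $C^\flat_\bullet\in N$; since $C^\flat_\bullet$ is a smooth point of $P$ lying only on $N$, and any component of $p_j^{-1}(D)$ meeting a main component $N'$ of $P$ has dimension at most $\dim N'-1=\dim N-1$, we conclude $G(\widetilde N)\subseteq N$. Hence $N$ contains the main component $\widetilde N$ of $\widetilde P$, as claimed.

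\emph{The main obstacle} is the degeneration step: one must show not merely that $N$ has a codimension-one boundary stratum lying over $D$, but that $N$ contains a chain \emph{all of whose links are free} and whose $j$-th link is a general chain of two free curves. This is where the hypothesis is used essentially (so that the general $j$-th link degenerates into $B$ at all), together with the smoothness of the Kontsevich-type moduli spaces at chains of free curves and the flatness statement of Lemma~\ref{lemm: equifibers} (plus irreducibility of the free loci) to spread the degeneration out over $N$ while retaining freeness of the remaining links.
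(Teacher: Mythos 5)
Your overall skeleton is the same as the paper's: use the fact that the projection $N \to M_j''$ is dominant, hence surjective by properness, to find a point of $N$ lying over a broken $j$-th link; produce from this a point of $N$ that is a chain of $r+1$ free curves; and conclude from smoothness of the moduli space at such a point that $N$ contains the whole component of the refined fiber product through it. The auxiliary material in your first two paragraphs (the gluing map $G$, the divisor $D$, the dimension count for $B$) is harmless but not needed for this.

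The gap is in the step you yourself flag as the main obstacle: producing a point of $N$ over the broken $j$-th link \emph{all of whose outer links are free}. Your justification --- ``their gluing points may be slid freely'' plus irreducibility of the free loci $U_i'$, $U_i''$ --- is not enough. Two things go wrong. First, the fiber product $U_1'' \times_X \cdots \times_X U_{j-1}''$ of irreducible varieties over $X$ need not be irreducible, so sliding a whole half-chain from one attachment point to another is not a matter of connectedness of the free loci. Second, and more seriously, your degeneration is a limit: to make the family proper over the degeneration parameter you must take closures, and the limit of a family of free chains need not consist of free curves, so the special member $C^\flat_\bullet$ you produce has no a priori reason to have free outer links. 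The paper avoids both problems by not following a one-parameter family at all: it fixes the broken link $m_1$ with both outer marked points general in $X$, observes that the fiber of $N$ over $m_1$ is a union of components of $G_1 \times G_2$ with $G_1$, $G_2$ fibers of the two half-chain spaces over general points of $X$, and then invokes Lemma~\ref{lemm: familiesthroughpoints}, which says precisely that \emph{every} component of such a general fiber contains chains of free curves (and in particular the component your limit lands in). You never invoke Lemma~\ref{lemm: familiesthroughpoints}; it is exactly the missing ingredient, and substituting it for the sliding argument repairs the proof.
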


\begin{proof}
By definition the projection $N \to M''_{j}$ is dominant, hence surjective by properness.  So we know that $N$ contains a point of
\begin{equation*}
M'_{1} \times_{X} M''_{2} \times_{X} \ldots \times_{X} M''_{j-1} \times_{X} \widetilde{M}_{j}'' \times_{X} \widehat{M}_{j}'' \times_{X} M_{j+1}'' \times_{X} \ldots \times_{X} M''_{r-1} \times_{X} M'_{r}.
\end{equation*}
If we can show that it contains a point which is a chain of free curves, then since such points are smooth in $\overline{M}_{0,0}(X)$ we can conclude that $N$ will contain an entire component of chains of length $r+1$.

Since the map $N \to M''_{j}$ is surjective, in particular, for any two-pointed length $2$ chain in $\widetilde{M}_{j}'' \times_{X} \widehat{M}_{j}''$ there is a curve parametrized by $N$ containing this chain.  Since the curves are free, we may choose a chain such that the first and last marked points are general.  The fiber of $N$ over this point is a union of components of
\begin{equation*}
G_{1} \subset M'_{1} \times_{X} \ldots \times_{X} M''_{j-1}
\end{equation*}
under a product with
\begin{equation*}
G_{2} \subset M''_{j+1} \times_{X} \ldots \times_{X} M'_{r}
\end{equation*}
where $G_{1}$ and $G_{2}$ are the fibers of the last or first marking respectively.  Applying Lemma \ref{lemm: familiesthroughpoints}, we see that every component of the fiber over this point contains chains of free curves.
\end{proof}

\subsection{Toward Batyrev's conjecture}

\begin{defi}
Let $C_{1} \cup \ldots \cup C_{r}$ be a chain of free curves, with map $f: C \to X$.  Let $f^{\dagger}: \{ 1,\ldots,r\} \to \overline{M}_{0,0}(X)$ denote the function which assigns to $i$ the unique component of the moduli space containing $C_{i}$.  We call $f^{\dagger}$ the combinatorial type of $f$.
\end{defi}

\begin{lemm} \label{lemm:reorderingcomponents}
Let $X$ be a smooth projective variety and let $M$ be a component of $\overline{M}_{0,0}(X)$.  Suppose that $M$ contains a point $f$ parametrizing a chain of free curves.  For any $\tilde{f}^{\dagger}$ which is a precomposition of $f^{\dagger}$ with a permutation, $M$ also contains a point representing a chain of free curves with combinatorial type $\tilde{f}^{\dagger}$.
\end{lemm}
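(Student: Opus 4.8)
The statement is about reordering the components appearing in a chain of free curves within a fixed component $M$ of $\overline{M}_{0,0}(X)$. Since any permutation is a product of adjacent transpositions, it suffices to treat the case of a single adjacent swap: given a chain whose combinatorial type is $(\ldots, M_i, M_{i+1}, \ldots)$, I want to produce inside $M$ a chain of type $(\ldots, M_{i+1}, M_i, \ldots)$ with all the other components left in place. Concretely, I would isolate a sub-chain of length $2$ built from $M_i$ and $M_{i+1}$ glued at a node, with free marked points on both ends, and show that the component of $M''_i \times_X M''_{i+1}$ containing it agrees with the component of $M''_{i+1} \times_X M''_i$ containing the swapped two-pointed chain; then re-glue to the rest of the chain using the smoothness of chains of free curves in $\overline{M}_{0,0}(X)$.

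First I would reduce to the local question about a two-component chain. Fix the chain $f$ parametrizing a chain of free curves in $M$, write $f^{\dagger}(i) = M_i$, and pick an index $i$ at which we want to swap. Restrict attention to the two curves $C_i, C_{i+1}$ and their node: this gives a point of $M''_i \times_X M''_{i+1}$, lying in a main component $N_{i}$, which parametrizes length-$2$ chains of free curves with marked endpoints. By Lemma~\ref{lemm: equifibers}, the evaluation $N_i \to X \times X$ at the two endpoints is dominant and flat, so a general member of $N_i$ has general (hence free, generic) marked points at both ends. Now I run the same construction with $M_{i+1}$ first and $M_i$ second: the main component $\widetilde{N}_{i} \subset M''_{i+1} \times_X M''_i$ likewise has general marked endpoints on a general member. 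The key point is that $N_i$ and $\widetilde N_i$ \emph{parametrize the same curves in $X$}, just with the two rational components in the opposite order along the chain — equivalently, a length-$2$ chain of a free curve from $M_i$ meeting a free curve from $M_{i+1}$ is literally the same stable map as a length-$2$ chain of a free curve from $M_{i+1}$ meeting a free curve from $M_i$, once we forget which node is ``first.'' So after forgetting the two marked points, $N_i$ and $\widetilde N_i$ map to the same locus in $\overline{M}_{0,0}(X)$; in particular a general chain of type $(M_i, M_{i+1})$ deforms within that locus to a chain of type $(M_{i+1}, M_i)$.

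Next I would globalize: I take the original length-$r$ chain, single out the sub-chain at position $i$, and use Lemma~\ref{lemm: familiesthroughpoints} together with Lemma~\ref{lemm: chains}-style gluing to see that I can replace that sub-chain by the swapped one while keeping the rest of the chain unchanged and everything free. In more detail: the main component $N \subset M'_1 \times_X \cdots \times_X M'_r$ containing $f$ surjects onto each $M''_j$; fixing general marked endpoints of the sub-chain in position $i$, the fiber of $N$ is a product of fiber-loci of the ``left part'' $M'_1 \times_X \cdots \times_X M''_{i-1}$ and the ``right part'' $M''_{i+2} \times_X \cdots \times_X M'_r$ (exactly as in the proof of Lemma~\ref{lemm: chains}), and by Lemma~\ref{lemm: familiesthroughpoints} these fibers contain chains of free curves with the endpoint in general position. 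Since the two-pointed swapped chain from $\widetilde N_i$ has general endpoints, it can be glued to such left and right parts, producing a chain of free curves of combinatorial type $(M_1, \ldots, M_{i-1}, M_{i+1}, M_i, M_{i+2}, \ldots, M_r)$. Because chains of free curves are smooth points of $\overline{M}_{0,0}(X)$, this new chain lies in the same component $M$ as $f$. Iterating over a sequence of adjacent transpositions realizing the desired permutation gives the statement.

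**The main obstacle.** The delicate point is not the gluing — that is routine given the earlier lemmas — but making precise that $N_i$ and $\widetilde N_i$ genuinely sit in the ``same'' locus of $\overline{M}_{0,0}(X)$, i.e.\ that swapping the order of two adjacent components in a chain does not change which component of the coarse moduli space the glued curve lies in. One has to be a little careful because the fiber products $M''_i \times_X M''_{i+1}$ and $M''_{i+1} \times_X M''_i$ are \emph{a priori} different schemes and one must check that the forgetful maps to $\overline{M}_{0,0}(X)$ land on the same component; the cleanest way is to observe that a length-$2$ chain with a free curve from $M_i$ and a free curve from $M_{i+1}$ has no automorphisms swapping the components (the pieces lie in different components of moduli unless $M_i=M_{i+1}$, in which case there is nothing to prove), so the underlying unpointed stable map determines an unordered pair, and both $N_i$ and $\widetilde N_i$ dominate the locus of such unordered configurations — hence their images in $\overline{M}_{0,0}(X)$ share a component. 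Once this symmetry is pinned down, the rest is an application of Lemmas~\ref{lemm: equifibers}, \ref{lemm: familiesthroughpoints}, and the smoothness of chains of free curves.
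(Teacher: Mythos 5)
Your reduction to adjacent transpositions and your observation that the \emph{unpointed} two-component chain is symmetric in its two pieces are both fine, but the argument has a genuine gap at the last step. The symmetry of the unpointed $2$-chain does not transfer to the full length-$r$ chain: once the sub-chain must be re-attached to the left part $C_1\cup\dots\cup C_{i-1}$ and the right part $C_{i+2}\cup\dots\cup C_r$, the attachment points are forced to lie on specific components (on the $M_i$-piece on the left and the $M_{i+1}$-piece on the right before the swap, and the reverse after), so the relevant two-pointed parameter spaces $M_i''\times_X M_{i+1}''$ and $M_{i+1}''\times_X M_i''$ are genuinely different components of the two-pointed moduli space, and no deformation through two-pointed chains of free curves of the given type connects them. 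Consequently, when you glue the swapped two-pointed chain to left and right parts you produce \emph{some} chain of free curves of the desired combinatorial type, hence a smooth point lying on a unique component $M'$ of $\overline{M}_{0,0}(X)$ --- but nothing in your argument identifies $M'$ with $M$. Smoothness gives uniqueness of the component through the new point; it does not connect the new point to $f$.

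What is missing is a common degeneration of the two orderings inside $M$. The paper supplies exactly this: after deforming $f$ so that the node $T_1\cap T_2$ maps to a general point $x$ of $X$, one specializes the two tails $S_1$ and $S_2$ so that they too pass through $x$; the limit is a stable map $g$ with a single contracted component over $x$ carrying four chains of free curves $S_1', T_1, T_2, S_2'$. A tangent-space computation (as in Testa's work) shows that $g$ is still a smooth point of $\overline{M}_{0,0}(X)$, hence lies on the unique component $M$, and $g$ visibly smooths both to a chain of type $S_1''\cup T_1\cup T_2\cup S_2''$ and to one of type $S_1''\cup T_2\cup T_1\cup S_2''$. This star configuration is the bridge between the two combinatorial types that your proof lacks; without it (or some substitute, such as an irreducibility statement for the full parameter space of $r$-chains), the final claim does not follow.
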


\begin{proof}
Suppose that $f: C \to X$ denotes our original chain of free curves.  It suffices to prove the statement when $\tilde{f}^{\dagger}$ differs from $f^{\dagger}$ by a transposition of two adjacent elements.  Suppose that $T_{1}$ and $T_{2}$ are two adjacent components of $C$.  Let $S_{1}$ denote the rest of the chain which attaches to $T_{1}$, and $S_{2}$ denote the rest of the chain which attaches to $T_{2}$.  After deforming $f$, we may suppose that the intersection of $T_{1}$ and $T_{2}$ maps to a general point $x$ of $X$.

Suppose we leave $T_{1}$ and $T_{2}$ fixed, but deform $S_{1}$ and $S_{2}$, maintaining a point of intersection with $T_{1}$ or $T_{2}$ respectively, so that the specialized curves $S_{1}'$ and $S_{2}'$ contain the point $x$.  By generality of the situation, the deformed $S_{1}'$ and $S_{2}'$ are still chains of free curves.  The stable curve $g: D \to X$ corresponding to this deformation looks like a single rational curve $Z$ contracted by $g$ to the point $x$ with four chains of free curves $S'_{1}, T_{1}, T_{2}, S'_{2}$ attached to $Z$.  A tangent space calculation (as in \cite[Corollary 1.6]{Testa05}) shows that $g$ is a smooth point of $M$.  However, by a similar argument $g$ is the deformation of a chain of rational curves of the type $S_{1}'' \cup T_{2} \cup T_{1} \cup S_{2}''$ where $S_{1}''$ and $S_{2}''$ are deformations of $S_{1}'$ and $S_{2}'$.
\end{proof}

\begin{lemm} \label{lemm:finitenessinsmalldegree}
Let $X$ be a smooth projective variety and let $L$ be a big and nef $\mathbb{Q}$-divisor on $X$.  Fix a positive integer $q$.  Consider the set $\mathcal{Z}$ of generically finite dominant covers $f: Z \to X$ such that there is a component $T$ of $\overline{M}_{0,0}(Z)$ which generically parametrizes free curves of $f^{*}L$-degree $\leq q$ and such that the induced map $T \to \overline{M}_{0,0}(X)$ is dominant birational onto a component.  
Up to birational equivalence, there are only finitely many elements of $\mathcal{Z}$.
\end{lemm}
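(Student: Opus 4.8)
The plan is to reconstruct $Z$ --- up to birational equivalence --- from the component $M \subset \overline{M}_{0,0}(X)$ onto which $T$ maps. The key observation is that once $M$ is fixed, $K(Z)$ is forced to be an intermediate field of a fixed finite extension of $K(X)$, and a finite separable extension admits only finitely many intermediate fields; the only remaining work is then to see that there are finitely many possibilities for $M$.

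First we would bound the components $M$. Since $L$ is big, write $L = A + E$ with $A$ an ample $\mathbb{Q}$-divisor and $E$ an effective $\mathbb{Q}$-divisor. Let $f \colon Z \to X$ lie in $\mathcal{Z}$ with witnessing component $T$, and let $\beta_Z$ be the numerical class of a curve parametrized by $T$. A general such curve $C$ is free on $Z$, hence is not contained in $\mathrm{Supp}(f^{*}E)$, so $f^{*}E \cdot \beta_Z = f^{*}E \cdot C \ge 0$; by the projection formula,
\begin{equation*}
A \cdot f_{*}\beta_Z = f^{*}A \cdot \beta_Z \le f^{*}L \cdot \beta_Z \le q .
\end{equation*}
Since $A$ is ample, the set $\{\beta \in \overline{\mathrm{Eff}}_1(X)_{\mathbb{Z}} \mid A \cdot \beta \le q\}$ is finite. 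As $f_{*}\beta_Z$ is the class of the stable maps generically parametrized by $M$, the component $M$ is one of the finitely many components of $\overline{M}_{0,0}(X, f_{*}\beta_Z)$, and only finitely many classes $f_{*}\beta_Z$ can occur; hence there are finitely many possibilities for $M$.

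Next, fix one such $M$, and let $M' \subset \overline{M}_{0,1}(X)$ be the associated one-pointed component, with its evaluation morphism $\mathrm{ev} \colon M' \to X$. Since $K(M')$ is finitely generated over $K(X)$, the algebraic closure $F$ of $K(X)$ inside $K(M')$ is a \emph{finite} extension of $K(X)$. Now suppose $f \colon Z \to X$ lies in $\mathcal{Z}$ with witnessing component $T$ mapping birationally onto $M$, and let $T' \subset \overline{M}_{0,1}(Z)$ be the one-pointed component over $T$. Post-composition with $f$, followed by stabilization, induces a morphism $\overline{M}_{0,1}(Z) \to \overline{M}_{0,1}(X)$; because the general curve of $T$ is irreducible, free, and not contracted by $f$, no stabilization is needed near the general point of $T'$, so the source curves are unaltered there and this morphism restricts to a birational morphism $T' \to M'$. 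The evaluation $T' \to Z$ is dominant, as free curves cover $Z$, and on $T'$ the two maps to $X$ --- namely $T' \to Z \xrightarrow{f} X$ and $T' \to M' \xrightarrow{\mathrm{ev}} X$ --- coincide, both sending a pointed curve $(C,p)$ on $Z$ to $f(p)$. Passing to function fields and using $K(T') = K(M')$, we obtain $K(X) \subseteq K(Z) \subseteq K(M')$; since $f$ is generically finite, $K(Z)$ is algebraic, hence finite, over $K(X)$, so $K(Z) \subseteq F$.

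Finally, $F/K(X)$ is finite and --- since we work in characteristic $0$ --- separable, so it has only finitely many intermediate fields. Thus every $Z$ occurring over a fixed $M$ has $K(Z)$ equal to one of these finitely many subfields, giving finitely many such $Z$ up to birational equivalence; combined with the finiteness of the set of possible $M$, this proves the lemma. I expect the middle step to be the main obstacle: carefully checking, on the coarse moduli spaces, that post-composition with $f$ induces the claimed birational morphism $T' \to M'$ (no stabilization over the generic point of $T'$, source curves preserved) and that the two evaluation maps to $X$ genuinely agree. Once the tower $K(X) \subseteq K(Z) \subseteq F$ is in place, the conclusion is just the elementary fact that a finite separable field extension has finitely many intermediate fields.
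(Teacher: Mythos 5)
Your proposal is correct and follows essentially the same route as the paper: finitely many candidate components $M$ by a degree bound, and then, for each $M$, the observation that $Z$ must sit birationally between $X$ and the Stein factorization of the universal family map $M' \to X$ (your intermediate-field formulation $K(X) \subseteq K(Z) \subseteq F$ is exactly this), leaving only finitely many choices. Your write-up just makes explicit the two points the paper leaves implicit, namely the ample-degree argument for finiteness of the classes and the verification that $M' \dashrightarrow Z$ exists via the birational map $T' \to M'$.
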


\begin{proof}
For degree reasons, there are only finitely many components $M$ of $\overline{M}_{0,0}(X)$ which can be the closure of the image of such a map.  Each such $M$ generically parametrizes free curves.  Thus, there is a unique component $M'$ of $\overline{M}_{0,1}$ lying over $M$.  Let $g: M' \to X$ denote the universal family map, and let $h: \overline{Z} \to X$ denote the Stein factorization of a resolution of $g$.  If the map $g$ factors rationally through a generically finite dominant map $f: Z \to X$, then so does $h$.  Thus for any given component $M$ there can only be finitely many corresponding elements of $\mathcal{Z}$.
\end{proof}

\begin{theo} \label{theo:polybounds}
Let $X$ be a smooth projective variety and let $L$ be a big and nef $\mathbb{Q}$-divisor on $X$.  Fix a positive integer $q$ and fix a subset $\mathcal{N} \subset \overline{M}_{0,0}(X)$ where each component generically parametrizes free curves of $L$-degree at most $q$.  There is a polynomial $P(d)$ such that there are at most $P(d)$ components of $\overline{M}_{0,0}(X,d)$ which contain a chain of free curves of total $L$-degree $d$ where each free curve is parametrized by a component of $\overline{M}_{0,0}(X)$ contained in $\mathcal{N}$.
\end{theo}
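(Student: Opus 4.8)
The plan is to show that every component $M$ of $\overline{M}_{0,0}(X,d)$ of the required type contains a very degenerate stable map --- a ``star'' of free curves from $\mathcal N$ all attached to a contracted rational curve lying over a fixed general point of $X$ --- and that the combinatorics of such a star determines $M$ up to polynomially many choices. We may first reduce to the case that $\mathcal N$ is finite: a free rational curve has class in the movable cone of curves, on which the big and nef class $L$ is strictly positive away from the origin, so the classes of free curves of $L$-degree at most $q$ are bounded, and only finitely many components of $\overline{M}_{0,0}(X)$ generically parametrize such curves.

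The key step is a ``starification'' statement: if $M$ contains a chain of free curves $C_1\cup\cdots\cup C_r$ with $C_j$ parametrized by $N_{i_j}\in\mathcal N$, then $M$ also contains a stable map whose domain is a tree consisting of a single contracted $\mathbb P^1$ (the core) lying over a general point $x\in X$ together with free curves $D_1,\dots,D_r$ attached to it, where $D_j\in N_{i_j}$ and $D_j$ meets $x$ (for $r\le 2$ the core is vacuous). I would prove this by iterating the degeneration-and-regluing move from the proof of Lemma~\ref{lemm:reorderingcomponents}, peeling the curves off the chain one at a time and attaching each to a growing contracted core over a general point: at each stage one deforms the two curves being separated so that their node maps to a general point $x'$, then slides the rest of the configuration --- in particular relocating the core built so far to lie over $x'$ --- so that everything meets $x'$. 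The resulting configuration has domain a tree of free curves and contracted $\mathbb P^1$'s, on which $g^{*}T_X$ has vanishing $H^1$ (as in \cite[Corollary 1.6]{Testa05}); hence it is a smooth point of $\overline{M}_{0,0}(X,d)$ and remains in $M$. All deformations involved preserve the numerical class and the family of each curve, so the star has total $L$-degree $d$ and legs in $\mathcal N$.

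To finish, fix a general point $x_0\in X$; deforming the general point occurring in a star of $M$ to $x_0$ shows $M$ contains a star over $x_0$. For $N\in\mathcal N$ let $E_N$ be the finite set of irreducible components of $\mathrm{ev}^{-1}(x_0)\subset N'$, where $N'\subset\overline{M}_{0,1}(X)$ is the component lying over $N$ and $\mathrm{ev}$ is the evaluation map, and put $\mathcal E=\bigsqcup_{N\in\mathcal N}E_N$. To a star over $x_0$ with legs $D_1,\dots,D_r$ assign the multiset on $\mathcal E$ recording, for each $j$, the component of $\mathrm{ev}^{-1}(x_0)$ containing the pointed curve $(D_j,\text{attaching point})$; its total $L$-degree is $d$. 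Since a point of $\overline{M}_{0,0}(X,d)$ does not order the components of the domain, the set of stars over $x_0$ with a prescribed such multiset is the image of an irreducible family --- a product of $\overline{M}_{0,r}$ with the prescribed components of $\mathrm{ev}^{-1}(x_0)$ --- hence is irreducible and lies in a single component of $\overline{M}_{0,0}(X,d)$. Therefore $M\mapsto(\text{multiset of some star in }M)$ is a well-defined injection from the components in question into the set of multisets on $\mathcal E$ of total $L$-degree $d$; the number of the latter is at most $(\lfloor d/d_{\min}\rfloor+1)^{|\mathcal E|}$, where $d_{\min}=\min_{N\in\mathcal N}(L\text{-degree of }N)>0$, which is a polynomial in $d$. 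This gives the required $P(d)$.

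The hard part will be the starification step: making precise the iterated application of the move of Lemma~\ref{lemm:reorderingcomponents} in the presence of a growing contracted core, in particular checking that one may relocate the core over a new general point of $X$ and that every intermediate tree of free and contracted curves is a smooth point of $\overline{M}_{0,0}(X,d)$, so that the whole process stays inside the fixed component $M$. Once starification is available, the reduction to finite $\mathcal N$ and the counting are routine.
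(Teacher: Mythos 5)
Your strategy is genuinely different from the paper's, and I believe it is viable. The paper never ``starifies'': it counts combinatorial types of chains directly, and handles the real difficulty --- that a fiber product $M_1'\times_X M_2'$ can have several main components when the evaluation maps $M_i'\to X$ have disconnected general fiber --- by inducting on the degree $e$ of the Stein factorizations of the universal family maps. For $e=1$ the fiber products of chains are irreducible (Observation \ref{obse:irreducibility}) and the count reduces to multisets on $\mathcal N$; for $e>1$ the paper passes to the finitely many intermediate covers $Z\to X$ (Lemma \ref{lemm:finitenessinsmalldegree}) and applies the inductive bound on $Z$. Your multiset on $\mathcal E=\bigsqcup_N E_N$ of fiber components of $\mathrm{ev}^{-1}(x_0)$ records exactly the data that the Stein--factorization induction tracks indirectly, so your counting step is a clean substitute: given starification, the argument that all stars over $x_0$ with a fixed multiset form an irreducible family of unobstructed points (each component contracted or free, so $H^1(C,f^*T_X)=0$ as in the paper's use of \cite[Corollary 1.6]{Testa05}), hence lie in a unique component, is correct, as is the reduction to finite $\mathcal N$ and the polynomial count of multisets. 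Your bound has degree $|\mathcal E|$ rather than roughly $|\mathcal N|$, but that is immaterial for the statement.

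The one substantial piece you have not supplied is the starification lemma, and you are right that it is the crux. Two points of care when you write it out. First, at each peeling step the tail $D_{j+2}\cup\cdots\cup D_r$ must be dragged so that its attachment point limits to $x$; the space of such tails through a fixed general point can be \emph{reducible} (this is the whole point of the theorem), so you must fix a specific irreducible family --- a main component of the relevant fiber product, with equidimensionality of fibers supplied by arguments like Lemmas \ref{lemm: equifibers} and \ref{lemm: familiesthroughpoints} --- containing your current configuration, verify it dominates the base of the degeneration, and then use unobstructedness of every intermediate tree of free and contracted curves to conclude the limit stays in $M$. You do not need to control \emph{which} star you land on (the multiset may depend on choices; well-definedness up to a choice of one star per component suffices for injectivity). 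Second, the collision of the node $D_{j+1}\cap D_{j+2}$ with the attachment point $D_{j+1}\cap Z$ naturally produces a core that is a \emph{tree} of contracted curves rather than a single $\mathbb P^1$; this is harmless since such configurations are still unobstructed and deform within $\overline{M}_{0,r}\times\prod E_i$ to a single-core star, but your definition of ``star'' should accommodate it or you should add this deformation explicitly. With these points addressed, your proof would go through and is arguably more conceptual than the paper's, at the cost of a more delicate degeneration argument.
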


Note that any chain of free curves on $X$ can be smoothed yielding a free curve (\cite[II.7.6 Theorem]{Kollar}).  Thus any component of $\overline{M}_{0,0}(X,d)$ which contains a chain of free curves must generically parametrize free curves. 

\begin{proof}
Let $\{ M_{\beta} \}_{\beta \in \Xi}$ denote the elements of $\mathcal{N}$.  For each component, we have a universal family map $\nu_{\beta}: M_{\beta}' \to X$.  We let $e_{\beta}$ denote the degree of the Stein factorization of the composition of $\nu_{\beta}$ with a resolution of singularities of $M_{\beta}'$ and set $e = \sup_{\beta \in \Xi} e_{\beta}$.  Note that since we have included a resolution in the definition, if $e_{\beta} = 1$ then the general fiber of $\nu_{\beta}$ is irreducible.

The proof is by induction on $e$.  First suppose that $e=1$.  Let $M$ be a component of $\overline{M}_{0,0}(X,d)$ satisfying the desired condition.  Since a chain of free curves is a smooth point of $M$, to count such components $M$ it suffices to count all possible components of the parameter space of chains of free curves from $\mathcal{N}$ of total degree $d$.  In fact, by applying Lemma \ref{lemm:reorderingcomponents}, we may reorder the combinatorial type however we please.  Furthermore, for any choice of combinatorial type the parameter space of chains of that type
\begin{equation*}
M'_{1} \times_{X} M''_{2} \times_{X} \ldots \times_{X} M''_{r-1} \times_{X} M'_{r}
\end{equation*}
is irreducible since by assumption each degree $e_{i}=1$.  Thus the number of possible $M$ is at most the possible ways of choosing (with replacement and unordered) components of $\mathcal{N}$ such that the total degree adds up to $d$.  This count is polynomial in $d$.

Before continuing with the proof, we make an observation:

\begin{obse} \label{obse:irreducibility}
Suppose that $M_{1}$ and $M_{2}$ are components of $\overline{M}_{0,0}(X)$ which generically parametrize free curves, and that (a resolution of) the map $M'_{1} \to X$ has degree $1$.  Then there is a unique main component of $M'_{1} \times_{X} M'_{2}$ which parametrizes length $2$ chains of free curves.  Indeed, since the general fiber of $M'_{1} \times_{X} M'_{2} \to M'_{2}$ is irreducible and $M'_{2}$ is irreducible, we see that $M_1' \times_X M_2'$ is irreducible.
\end{obse}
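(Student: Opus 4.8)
The plan is to establish the Observation in the form actually used: that $M'_1\times_X M'_2$ has a unique component dominating $M'_2$, and that this component is a main component parametrizing length $2$ chains of free curves. Since any main component of $M'_1\times_X M'_2$ dominates $M'_2$ by definition, uniqueness of the main component follows at once from the first assertion, and this is what the ``Indeed'' in the statement amounts to.

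First I would reduce to a statement about the evaluation map of $M_1$. Write $\mathrm{ev}_2\colon M'_2\to X$ for the evaluation at the marked point and $\nu_1\colon M'_1\to X$ for the universal family map of $M_1$. Because $M_2$ generically parametrizes free curves (which sweep out $X$), the map $\mathrm{ev}_2$ is dominant; and because $M_1$ generically parametrizes free curves and $M'_1$ is proper, $\nu_1$ is surjective. Hence the projection $\mathrm{pr}_2\colon M'_1\times_X M'_2\to M'_2$ is dominant, with fibre over a point $[C,p]$ isomorphic to $\nu_1^{-1}(\mathrm{ev}_2(p))$. So for a general $[C,p]\in M'_2$ this fibre is the fibre of $\nu_1$ over a general point of $X$, and it suffices to show the latter is irreducible.

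Next I would prove that the general fibre of $\nu_1$ is irreducible, which is where the degree hypothesis enters. Let $\beta\colon \widetilde M'_1\to M'_1$ be a resolution of singularities and let $\widetilde M'_1\to \overline Z\xrightarrow{\ \mu\ } X$ be the Stein factorisation of $\nu_1\circ\beta$; by hypothesis $\deg\mu=1$, and since $\mu$ is finite and $X$ is normal, $\mu$ is an isomorphism. Thus $\nu_1\circ\beta$ has connected fibres, and as $\widetilde M'_1$ is smooth its general fibre is smooth and connected, hence irreducible. Let $B\subsetneq M'_1$ be the proper closed subset over which $\beta$ is not an isomorphism. By generic flatness there is a dense open $X^\circ\subseteq X$ over which $\nu_1$ is flat, so $\nu_1^{-1}(x)$ is pure of dimension $\dim M'_1-\dim X$ for $x\in X^\circ$; shrinking $X^\circ$, a dimension count using $\dim B<\dim M'_1$ and $\dim\beta^{-1}(B)<\dim\widetilde M'_1$ shows that $\nu_1^{-1}(x)\cap B$ and $(\nu_1\circ\beta)^{-1}(x)\cap\beta^{-1}(B)$ have dimension strictly smaller than $\dim M'_1-\dim X$. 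Then $\nu_1^{-1}(x)\setminus B$ is dense in $\nu_1^{-1}(x)$, and $\beta$ identifies it with a dense open subset of the irreducible variety $(\nu_1\circ\beta)^{-1}(x)$; hence $\nu_1^{-1}(x)$ is irreducible.

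Finally I would assemble the argument. The generic fibre of $\mathrm{pr}_2$ over $M'_2$ is irreducible by the previous two steps, so its closure in $M'_1\times_X M'_2$ is the unique component dominating $M'_2$; call it $V_0$. As every main component dominates $M'_2$, $V_0$ is the unique main component. To see that $V_0$ actually parametrizes length $2$ chains of free curves, attach a general free curve from $M_1$ to a general free curve from $M_2$ at a common general point of $X$: this yields a stable map whose domain is a chain of two free curves, hence a smooth point of $\overline{M}_{0,0}(X)$, and as the two curves vary these points sweep out a component of $M'_1\times_X M'_2$ dominating $M'_2$, which must be $V_0$. I expect the one delicate point to be the fibre comparison in the third paragraph: a priori $\beta$ could contract a divisor that dominates $X$, and one must check this cannot disconnect a general fibre of $\nu_1$; this is handled purely by the dimension estimates above (generic flatness for $\nu_1$ together with $\dim\beta^{-1}(B)<\dim\widetilde M'_1$), so the verification is routine, and everything else is formal.
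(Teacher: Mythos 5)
Your proposal is correct and follows essentially the same route as the paper: the observation's own one-line justification is precisely that the general fiber of $M'_1\times_X M'_2\to M'_2$ is the general fiber of $M'_1\to X$, which is irreducible because the Stein factorization of (a resolution of) $M'_1\to X$ has degree $1$. You have simply filled in the routine details (Zariski's main theorem for the finite birational part, descending irreducibility of the general fiber from the resolution to $M'_1$, and the existence of a chain of free curves in the resulting component), and in fact you are slightly more careful than the text, which asserts irreducibility of the whole fiber product where only uniqueness of the component dominating $M'_2$ is needed or used.
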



Now suppose that $e > 1$.  For a dominant generically finite map $g: Z \to X$ of degree $\geq 2$ with $Z$ smooth, let $\mathcal{N}_{Z}$ denote the subset of $\mathcal{N}$ consisting of components $M$ such that the universal map $M' \to X$ factors rationally through $Z$.  Note that the locus where the rational map to $Z$ is not defined must miss the general fiber of $M' \to M$.  Thus, we obtain a family of free curves on $Z$ parametrized by an open subset of $M$.  A deformation calculation shows that the general curve has vanishing intersection with the ramification divisor; in particular, $M$ is birational to a component of $\overline{M}_{0,0}(Z)$.  Note that for any component of $\mathcal{N}_{Z}$ the degree of the rational map from the component to $Z$ is strictly smaller than for the corresponding component in $\mathcal{N}$.  Consider the corresponding families of rational curves on $Z$ measured with respect to the big and nef divisor $g^{*}L$.  By the induction hypothesis, there is a polynomial $P_{Z}(d)$ which gives an upper bound for the number of components of $\overline{M}_{0,0}(Z,d)$ which arise by gluing chains from $\mathcal{N}_{Z}$ on $Z$.  Furthermore, by Lemma \ref{lemm:finitenessinsmalldegree} there are only finitely many $Z$ for which $\mathcal{N}_{Z}$ is non-empty.  


Fix a positive integer $r$ and a dominant generically finite map $f: Z \to X$ of degree $\geq 2$ with $Z$ smooth.  As we vary over possible choices $M_{i} \in \mathcal{N}$, consider all main components of $M'_{1} \times_{X} \ldots \times_{X} M'_{k}$ such that there is an integer $b$ where the component $M$ of $\overline{M}_{0,0}(X)$ obtained by gluing the first $b$ curves has $L$-degree $r$ and has a universal family map which factors rationally through $Z$, but if we consider the component arising from gluing the first $b+1$ curves, the Stein factorization of a resolution of the universal family map has degree $1$. 
We see there are at most $P_{Z}(r)$ possible components $M$ obtained by gluing the first $b$ curves in the chain.
Next consider adding one more component.  By degree considerations, there can be at most $e \cdot P_{Z}(r)$ components obtained by gluing the first $(b+1)$ curves, and for any such component the universal family has map to $X$ with generically irreducible fibers.  Finally, to add on the remaining components, we may use Lemma \ref{lemm:reorderingcomponents} to reorder the other components arbitrarily.  Applying Observation \ref{obse:irreducibility}, we see that the total number of glued components for this choice of $b$ and $Z$ is bounded above by $e \cdot P_{Z}(r)$ times the number of ways to choose (with replacement and unordered) $(k-b-1)$ components from $\mathcal{N}$.

In total, the number of components of $\overline{M}_{0,0}(X)$ containing chains of curves from $\mathcal{N}$ of degree $d$ will be bounded above by the sum of the previous bounds as we vary $Z$ and $r$.  Let $Q(k)$ denote the polynomial representing the number of ways to choose $k$ components (with replacement and unordered) from $\mathcal{N}$.  Altogether, the number of components is bounded above by the polynomial in $d$ given by
\begin{equation*}
e \cdot Q(d) \cdot \sum_{Z} \sum_{r \leq d} P_{Z}(r).
\end{equation*}
\end{proof}

\subsection{Gluing free curves}

In this section we attempt to improve the degree of the polynomial bound constructed in Theorem \ref{theo:polybounds}.  Returning to the proof, we see that the degree of the Stein factorization of $\mathcal{C} \to X$ (where $\mathcal{C}$ is a universal family of rational curves) plays an important role.  The key observation is that we can use the $a$-invariant to control the properties of this Stein factorization.

\begin{prop} \label{prop:highdegreemeansequala}
Let $X$ be a smooth projective weak Fano variety.  Suppose that $W$ is a component of $\mathrm{Mor}(\mathbb P^1, X)$ parametrizing a dominant family of rational curves $\pi: \mathcal C \rightarrow W$ with evaluation map $s: \mathcal C \rightarrow X$.  Let $\widetilde{\mathcal C}$ be the resolution of a projective compactification of $\mathcal C$ with a morphism $s': \widetilde{\mathcal C} \rightarrow X$ extending the evaluation map. Consider the Stein factorization of $s'$, $\widetilde{\mathcal C} \rightarrow Y \rightarrow^{f} X$.  Then $a(Y,-f^{*}K_X) = a(X,-K_{X})$.
\end{prop}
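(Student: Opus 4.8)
The plan is to reduce to the key inequality $a(Y, -f^{*}K_{X}) \geq a(X, -K_{X})$ and then derive equality from the structure of the family, ultimately invoking Proposition~\ref{prop:expdim} (or rather the arguments behind it). First I would record the trivial direction: since $f\colon Y\to X$ is dominant generically finite, pulling back an effective divisor realizing $a(X,-K_X)(-K_X)+K_X$ gives an effective divisor on $Y$, and by Riemann--Hurwitz $K_Y = f^{*}K_X + R$ with $R\geq 0$ effective; hence $a(X,-K_X)(-f^{*}K_X)+K_Y = f^{*}\big(a(X,-K_X)(-K_X)+K_X\big)+R$ is pseudo-effective, which shows $a(Y,-f^{*}K_X)\leq a(X,-K_X)$. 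So the real content is the reverse inequality $a(Y,-f^{*}K_X)\geq a(X,-K_X)$, equivalently: for $t<a(X,-K_X)$ the class $t(-f^{*}K_X)+K_Y$ is \emph{not} pseudo-effective on $Y$.

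The main idea is that $Y$ carries, by construction, a dominating family of rational curves coming from the fibers of $\widetilde{\mathcal C}\to Y$ — more precisely, the strict transforms in $Y$ of the curves parametrized by $W$. Let me denote by $C$ a general such curve; it is a free rational curve on $X$ (since $W$ parametrizes a dominant family on a weak Fano variety, the general member is free), and its strict transform $\widetilde C$ on $Y$ deforms to dominate $Y$. The key numerical input is that $\widetilde C$ is a \emph{very free-ish} curve of controlled anticanonical degree: because $C$ is free on the weak Fano $X$ and $f$ contracts no divisor through a general point, we get $-f^{*}K_X\cdot\widetilde C = -K_X\cdot C$ and, crucially, $-K_Y\cdot\widetilde C = -f^{*}K_X\cdot\widetilde C - R\cdot\widetilde C \leq -K_X\cdot C$. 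Now I would run the dimension-count argument exactly as in the proof of Proposition~\ref{prop:expdim}: the component of $\mathrm{Mor}(\mathbb P^1,Y)$ containing $\widetilde C$ dominates $Y$, so its dimension is at most $-K_Y\cdot\widetilde C+\dim Y$; on the other hand this dimension is at least $\dim W = -K_X\cdot C + \dim X$ (the family on $Y$ is a strict-transform copy of the family on $X$, and $\dim Y = \dim X$). Combining gives $-K_X\cdot C+\dim X \leq -K_Y\cdot\widetilde C + \dim X \leq -K_X\cdot C + \dim X$, forcing $R\cdot\widetilde C = 0$ and $-K_Y\cdot\widetilde C = -K_X\cdot C$.

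From here the conclusion is a pseudo-effectivity argument on $Y$. Suppose for contradiction $a(Y,-f^{*}K_X) < a(X,-K_X)$, so $a(Y,-f^{*}K_X)(-f^{*}K_X)+K_Y$ lies on the boundary of $\Eff^1(Y)$ while $a(X,-K_X)(-f^{*}K_X)+K_Y$ is in the interior of a larger cone; in particular there is some $t$ with $a(Y,-f^{*}K_X) < t < a(X,-K_X)$ for which $D_t := t(-f^{*}K_X)+K_Y$ is pseudo-effective. Intersecting against the movable curve class $[\widetilde C]$ (which is nef on $Y$ since it deforms to dominate, by \cite[BDPP]{BDPP}), and using $-K_Y\cdot\widetilde C = -K_X\cdot C$ and $-f^{*}K_X\cdot\widetilde C = -K_X\cdot C$, we get $D_t\cdot\widetilde C = t(-K_X\cdot C) - (-K_X\cdot C) = (t-1)(-K_X\cdot C)$. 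This has the wrong sign only if $t<1$, which is not automatic, so this crude pairing is not quite enough — and this is the step I expect to be the real obstacle: a single curve class cannot detect the full $a$-invariant gap. The fix is to work not with $[\widetilde C]$ but with the limit, as the anticanonical degree of the free family grows, of the normalized curve classes; equivalently, one uses that by \cite[BDPP]{BDPP} and the fact that $\widetilde{\mathcal C}\to Y$ is a dominant family one can build movable classes $\alpha_m$ with $-K_Y\cdot\alpha_m = -f^{*}K_X\cdot\alpha_m$ and with $-f^{*}K_X\cdot\alpha_m \to \infty$, while the ``error'' terms stay bounded; pseudo-effectivity of $D_t$ then forces $t\geq a(X,-K_X)$ in the limit, a contradiction. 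Alternatively — and this is cleaner — I would simply invoke Proposition~\ref{prop:expdim} directly: the family $\widetilde{\mathcal C}\to Y$ on the resolution shows $Y$ is uniruled with the expected dimension of rational curves realized, and run the argument of that proposition with $Z=Y$, $f = \mathrm{id}$, noting that the only way the dimension count can balance (as shown above) is if $a(Y,-f^{*}K_X) = a(X,-K_X)$ exactly; any strict inequality $a(Y,-f^{*}K_X)>a(X,-K_X)$ is impossible by the trivial direction, and $a(Y,-f^{*}K_X)<a(X,-K_X)$ would, by the contrapositive of Proposition~\ref{prop:expdim}(1) applied on $Y$, force the family of $\widetilde C$'s to have dimension strictly larger than $-K_Y\cdot\widetilde C + \dim Y$, contradicting that it is a dominant family on the smooth projective variety $Y$ (where dominant families have the expected dimension by \cite{Kollar}). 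This last formulation packages the obstacle entirely inside the cited results, so I would present the proof in that form.
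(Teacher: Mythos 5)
Your skeleton is the paper's proof: the easy inequality $a(Y,-f^{*}K_X)\leq a(X,-K_X)$ via $K_{\widetilde Y}=\widetilde f^{*}K_X+R$ with $R\geq 0$, then the strict-transform family on (a resolution $\widetilde Y$ of) $Y$ and the dimension count forcing $(K_{\widetilde Y}-\widetilde f^{*}K_X)\cdot \widetilde C=R\cdot\widetilde C=0$. That is exactly the heart of the argument. The problem is what you do afterwards. The step you flag as ``the real obstacle'' is not an obstacle at all: since $X$ is weak Fano, $-K_X$ is big, so $a(X,-K_X)=1$ (for $t<1$ the class $(t-1)(-K_X)$ is the negative of a big class, hence not pseudo-effective). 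Therefore $t<a(X,-K_X)$ \emph{is} the same as $t<1$, and your own computation $D_t\cdot\widetilde C=(t-1)(-K_X\cdot C)<0$ against the movable class $[\widetilde C]$ already kills pseudo-effectivity of $D_t$ and finishes the proof. Equivalently, in the paper's phrasing: $K_{\widetilde Y}+a(X,-K_X)(-\widetilde f^{*}K_X)=R$ is pseudo-effective but not big (it pairs to zero with a class of curves dominating $\widetilde Y$), and subtracting any positive multiple of the big class $-\widetilde f^{*}K_X$ from a non-big pseudo-effective class destroys pseudo-effectivity. No limit of normalized classes is needed.

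Because you dismiss the correct argument, both substitutes you offer are defective, and you explicitly elect to present the second one. The ``limit of classes $\alpha_m$'' sketch is too vague to check and is solving a problem that does not exist. The ``cleaner'' version misuses Proposition~\ref{prop:expdim}: part (1) says that excess dimension of a component implies the swept-out locus has \emph{higher} $a$-value; its contrapositive gives $\dim\leq$ expected when the $a$-value is not higher, and yields no implication of the form ``$a(Y,-f^{*}K_X)<a(X,-K_X)$ forces the family on $Y$ to have more than the expected dimension.'' As written, that final paragraph is not a proof. One further small point: $Y$ is normal but possibly singular, so the statement $a(Y,-f^{*}K_X)=a(X,-K_X)$ should be verified on a resolution $\widetilde Y\to Y$, which is where $K_{\widetilde Y}=\widetilde f^{*}K_X+R$ and the intersection computation legitimately take place; you intermittently write $K_Y$ on $Y$ itself.
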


\begin{proof}
Let $\widetilde{Y}$ be a resolution of $Y$ with map $\widetilde{f}: \widetilde{Y} \to X$.  By taking the strict transform of the family of rational curves, one obtains a dominant family on $\widetilde{Y}$ which is parametrized by an open subset of a component of $\mathrm{Mor}(\mathbb P^1, \widetilde{Y})$.  Since the dimension of this component is the same on $\widetilde{Y}$ and on $X$, and equals to the expected dimension in both cases, we have
\[
K_X \cdot C = K_{\widetilde{Y}} \cdot C \qquad \implies \qquad (K_{\widetilde{Y}} - \widetilde{f}^{*}K_{X}) \cdot C = 0.
\]
Thus the divisor $K_{\widetilde{Y}} + a(X,-K_{X})(-\widetilde{f}^{*}K_{X})$ is pseudo-effective but not big.
\end{proof}

Suppose now that $X$ is a smooth projective weak Fano variety satisfying:
\begin{itemize}
\item $X$ does not admit any $a$-cover.
\item every free curve on $X$ deforms (as a stable map) to a chain of free curves of degree $\leq q$.
\end{itemize}
Since the first condition holds, we can apply Proposition \ref{prop:highdegreemeansequala} to see that for every component of $\mathrm{Mor}(\mathbb P^1, X)$ parametrizing a dominant family of rational curves the evaluation morphism has connected fibers.  Since the second condition holds, we can apply Theorem \ref{theo:polybounds} to control the number of components of the parameter space of rational curves.  

Let $\mathcal{S}$ be the set of components of $\mathrm{Mor}(\mathbb{P}^{1},X)$ that generically parametrize free curves of degree $\leq q$.  Consider the abelian group $\Lambda = \oplus_{M \in \mathcal S}\mathbb{Z}M$.  For sequences $\{M_{i}\}_{i=1}^{s}$, $\{M_{j}'\}_{j=1}^{t}$ of elements in $\mathcal{S}$ we introduce the relation $\sum M_{i} = \sum M_{j}'$ whenever a chain of free curves parametrized by the $M_{i}$ lies in the same component of $\overline{M}_{0,0}(X)$ as a chain of free curves which lie in the $\{M_{j}'\}$.  The argument of Theorem \ref{theo:polybounds} shows that the total number of components of $\Mor(\mathbb{P}^{1},X)$ parametrizing free curves of degree $\leq m$ is bounded above by a polynomial in $m$ of degree
\begin{equation*}
\mathrm{rank}(\Lambda/ R)
\end{equation*}
where $R$ is the set of relations described above.  By analyzing components of $\Mor(\mathbb{P}^{1},X)$ of low degree, one can hope to obtain enough relations to verify Conjecture \ref{conj:binvandcomponents}.  For example:

\begin{coro}
Let $X$ be a smooth projective Fano variety of Picard rank $1$ satisfying:
\begin{itemize}
\item $X$ does not admit any $a$-cover.
\item every free curve on $X$ deforms (as a stable map) to a chain of free curves of degree $\leq q$.
\end{itemize}
Suppose that the space of free curves of degree $q!$ is irreducible.  Then there is an upper bound on the number of components of $\Mor(\mathbb{P}^{1},X,\alpha)$ parametrizing free curves as we vary the class $\alpha \in \Nef_{1}(X)_{\mathbb{Z}}$.
\end{coro}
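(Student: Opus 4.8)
The plan is to combine the polynomial bound of Theorem \ref{theo:polybounds} with the rigidity afforded by Picard rank $1$ and the hypothesis that degree-$q!$ free curves form an irreducible family. Since $X$ has Picard rank $1$, the class $\alpha \in \Nef_{1}(X)_{\mathbb{Z}}$ is determined by its $L$-degree (equivalently, its anticanonical degree), so it suffices to bound the number of components of $\mathrm{Mor}(\mathbb{P}^{1},X,d)$ parametrizing free curves as $d$ grows, and to show this bound does not actually grow. First I would take $\mathcal{N} \subset \overline{M}_{0,0}(X)$ to be the finite set of components generically parametrizing free curves of $L$-degree at most $q$; by the second hypothesis every free curve on $X$, hence every component of $\mathrm{Mor}(\mathbb{P}^{1},X)$ generically parametrizing free curves, arises by smoothing a chain of free curves from $\mathcal{N}$. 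Theorem \ref{theo:polybounds} therefore gives a polynomial $P(d)$ bounding the number of such components of degree $\leq d$, and the discussion following Proposition \ref{prop:highdegreemeansequala} identifies the degree of the optimal such polynomial with $\mathrm{rank}(\Lambda/R)$, where $\Lambda = \oplus_{M \in \mathcal{S}} \mathbb{Z}M$ and $R$ is the group of relations $\sum M_{i} = \sum M_{j}'$ coming from gluings of chains of free curves lying in a common component of $\overline{M}_{0,0}(X)$.

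The crux is then to show $\mathrm{rank}(\Lambda/R) = 0$, i.e.\ that $\Lambda \otimes \mathbb{Q} = R \otimes \mathbb{Q}$. Here I would exploit the irreducibility of the space of free curves of degree $q!$. For each component $M \in \mathcal{S}$, of degree $d_{M} \leq q$, consider gluing together $q!/d_{M}$ copies of $M$ into a chain of free curves of total degree $q!$; by the smoothing theorem (\cite[II.7.6]{Kollar}) and the irreducibility hypothesis, the resulting chain lies in the \emph{unique} component $M_{q!}$ of $\overline{M}_{0,0}(X,q!)$ parametrizing free curves. This produces, for every $M \in \mathcal{S}$, the relation $\tfrac{q!}{d_{M}}\, M = [M_{q!}]$ in $\Lambda/R$ (where by $[M_{q!}]$ I mean the class of any fixed chain decomposition of a degree-$q!$ free curve into members of $\mathcal{S}$). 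Consequently every generator $M$ of $\Lambda$ becomes, after clearing denominators, a fixed rational multiple of the single class $[M_{q!}]$ in $(\Lambda/R) \otimes \mathbb{Q}$, so $(\Lambda/R) \otimes \mathbb{Q}$ is spanned by one element and has rank at most $1$. To get rank exactly $0$ one uses that there are at least two essentially different ways to realize a degree-$2 \cdot q!$ free curve (which again lies in a unique component, since Picard rank $1$ and the no-$a$-cover hypothesis force connected evaluation fibers via Proposition \ref{prop:highdegreemeansequala}, and one checks irreducibility propagates): gluing two copies of the degree-$q!$ chain versus gluing copies of $M$ and $M'$ for distinct $M, M' \in \mathcal{S}$ with a common total degree, yielding a relation that pins down the remaining rank-$1$ ambiguity. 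Then $\mathrm{rank}(\Lambda/R) = 0$, the bounding polynomial $P(d)$ is eventually constant, and the number of components of $\mathrm{Mor}(\mathbb{P}^{1},X,\alpha)$ parametrizing free curves is bounded independently of $\alpha$.

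The main obstacle I anticipate is the bookkeeping needed to deduce that irreducibility of the degree-$q!$ free locus actually \emph{propagates} to higher degrees, so that chains glued in different ways land in the same component. The hypothesis on the no-$a$-cover is what makes Proposition \ref{prop:highdegreemeansequala} applicable — giving that dominant families of rational curves have evaluation maps with connected fibers — and connectedness of fibers together with Observation \ref{obse:irreducibility} and Lemma \ref{lemm:reorderingcomponents} should let one inductively show that the space of free curves of degree a multiple of $q!$ remains irreducible. Once that propagation is established, the relations above collapse $\Lambda/R$ to a finite group, and the corollary follows immediately from the polynomial bound of Theorem \ref{theo:polybounds} specialized to the degree-$0$ case. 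A secondary subtlety is ensuring that every component generically parametrizing free curves — not merely dominant families — is captured by a chain from $\mathcal{N}$; this is where one invokes that on a Fano variety every free curve degenerates to a chain of free curves of bounded degree, which is exactly the second standing hypothesis.
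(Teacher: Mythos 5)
Your first step is exactly the paper's intended argument: gluing $q!/d_{M}$ copies of each $M\in\mathcal{S}$ into a chain of total degree $q!$, smoothing, and invoking irreducibility of the degree-$q!$ free locus yields the relations $\tfrac{q!}{d_{M}}M=\tfrac{q!}{d_{M'}}M'$ in $\Lambda/R$, so $(\Lambda/R)\otimes\mathbb{Q}$ is spanned by a single class and has rank at most $1$. The gap is the next step: $\mathrm{rank}(\Lambda/R)=0$ is impossible. Every generating relation of $R$ equates two chains lying in a common component of $\overline{M}_{0,0}(X)$, hence of equal total degree; therefore the degree homomorphism $\Lambda\to\mathbb{Q}$, $M\mapsto \deg M$, annihilates $R$ and is nonzero (Fano varieties carry free curves, so $\mathcal{S}\neq\emptyset$), forcing $\mathrm{rank}(\Lambda/R)\geq 1$. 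Your proposed extra relations from two realizations of a degree-$2\cdot q!$ curve again relate classes of equal degree and cannot kill this character. Correspondingly, the conclusion you draw from it --- that the polynomial bounding the cumulative number of components of degree $\leq m$ is eventually constant --- is false, since that cumulative count grows at least linearly in $m$.

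The repair is that rank exactly $1$ already suffices, but via a different mechanism than the one you invoke. Write $\Lambda/R\cong\mathbb{Z}\oplus T$ with $T$ finite torsion; since the degree character is nonzero on the free part, its kernel is $T$, so each fiber of the degree character is a coset of $T$ of cardinality $|T|$, uniformly in the degree. Because $\rho(X)=1$ identifies $\alpha$ with its degree, and because the second hypothesis together with Lemma \ref{lemm:reorderingcomponents} and Observation \ref{obse:irreducibility} (applicable to every free component, since the no-$a$-cover hypothesis combined with Proposition \ref{prop:highdegreemeansequala} forces every evaluation map to have connected Stein factorization) lets one attach to each component of $\Mor(\mathbb{P}^{1},X,\alpha)$ generically parametrizing free curves a well-defined element of the fiber of $\Lambda/R$ over $\deg\alpha$, injectively, the per-class count is bounded by $|T|$. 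Note that the cumulative linear bound coming from $\mathrm{rank}(\Lambda/R)=1$ alone would not give this: it permits up to $O(d)$ components concentrated in a single class of degree $d$. So the fiberwise finiteness of the degree character on a rank-one group is genuinely needed, and it cannot be replaced by the rank-zero claim as you attempt.
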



\section{Geometric Manin's Conjecture} \label{section:statementofmc} 

In this section we present a precise version of Manin's Conjecture for rational curves.   We will need the following definitions:

\begin{defi} \label{defi:alphaconstant}
Let $X$ be a smooth uniruled projective variety and let $L$ be a big and nef $\mathbb{Q}$-divisor on $X$.
\begin{itemize}
\item The rationality index $r(X,L)$ is the smallest positive rational number of the form $L \cdot \alpha$ as $\alpha$ varies over all classes in $N_{1}(X)_{\mathbb{Z}}$.
\item  Let $V$ be the subspace of $N_{1}(X)$ spanned by $F(X,L)$.  (Note that by \cite[Theorem 2.16]{HTT15} $V$ is a rational subspace with respect to the lattice of curve classes.)  Let $Q$ denote the rational hyperplane in $V$ consisting of all curve classes with vanishing intersection against $L$; there is a unique measure $d\Omega$ on $Q$ normalized by the lattice of integral curve classes.  This also induces a measure on the parallel affine plane $Q_{r} := \{ \beta \in V | L \cdot \beta = r(X,L) \}$.  We define $\alpha(X,L)$ to be the volume of the polytope $Q_{r} \cap F(X,L)$.

In other words, $\alpha(X,L)$ is the top coefficient of the Ehrhart polynomial for the polytope obtained by slicing $F$ by the codimension $1$ plane $Q_{r}$.
\end{itemize}
\end{defi}

\subsection{Statement of conjecture: rigid case}  \label{section:rigidcase}
Manin's Conjecture predicts the growth rate of components of $\mathrm{Mor}(\mathbb{P}^{1},X)$ after removing the rational curves in some ``exceptional set.''  In the number-theoretic setting, removing points from a closed subset is not sufficient to obtain the expected growth rate; one must remove a thin set of points (see \cite{BT-cubic},  \cite{Peyre03}, \cite{BL16}, \cite{LeRudulier}).   Following the results of \cite{LT16}, we will interpret a ``thin set of rational curves'' via the geometry of the $a$ and $b$ constants.

In this section we will address the situation when $\kappa(K_{X} + a(X,L)L)=0$.  Note that this includes the case when $X$ is weak Fano and $L = -K_{X}$.  The following definition identifies exactly which components should be counted in this situation; it is identical to the conjectural description of the exceptional set for rational points.

\begin{defi} \label{defi:manincomponent}
Let $X$ be a smooth projective uniruled variety and let $L$ be a big and nef $\mathbb{Q}$-divisor on $X$ such that $\kappa(K_{X} + a(X,L)L) = 0$.  Let $M \subset \Mor(\mathbb{P}^{1},X)$ be a component, let $\mathcal{C}$ denote the universal family over $M$ and let $s: \mathcal{C} \to X$ denote the family map.  We say that $M$ is a Manin component if:
\begin{enumerate}
\item The curves parametrized by $M$ have class contained in $F(X,L)$.
\item The morphism $s$ does not factor rationally through any thin morphism $f: Y \to X$ such that $a(Y,f^{*}L) > a(X,L)$.
\item The morphism $s$ does not factor rationally through any dominant thin morphism $f: Y \to X$ such that $f$ is face contracting and
\begin{equation*}
(a(Y,f^{*}L),b(Y,f^{*}L)) \geq (a(X,L),b(X,L)) 
\end{equation*}
in the lexicographic order.
\item The morphism $s$ does not factor rationally through any dominant thin morphism $f: Y \to X$ such that $a(Y, f^*L) = a(X, L)$ and $\kappa(K_{Y} + a(Y,f^*L)f^{*}L) > 0$.
\end{enumerate}
\end{defi}

Note that by Theorem \ref{theo:fanocase} any Manin component will necessarily parametrize a dominant family of curves.

\begin{rema}
Condition (ii) is necessitated by Theorem \ref{theo:closednotfree} and condition (iii) is motivated by Conjecture \ref{conj:binvandcomponents}, but we have not yet discussed condition (iv).  For rational points, such a restriction is necessary to obtain the correct Peyre's constant; see \cite{BL16}.  For rational curves, this condition rules out ``extraneous'' components consisting of curves that are free but not very free.  Again such components can modify the leading constant in Manin's Conjecture; see Theorem \ref{theo:componentsforfanothreefolds} for an example.  In order to obtain uniqueness in Conjecture \ref{conj:componentsandabreaking} below one must include condition (iv).
\end{rema}

\begin{rema}
Proposition \ref{prop:highdegreemeansequala} and \cite[Proposition 4]{Kollar15} show that the curves parametrized by Manin components will almost always satisfy the weak Lefschetz property.
\end{rema}

Our main conjecture concerning Manin components is:

\begin{conj}[Strong Manin's Conjecture] \label{conj:componentsandabreaking}
Let $X$ be a smooth projective uniruled variety and let $L$ be a big and nef $\mathbb{Q}$-divisor such that $\kappa(K_{X} + a(X,L)L) = 0$.  For any $\mathbb{Z}$-curve class $\alpha$ contained in the relative interior of $F(X,L)$ there is at most one Manin component parametrizing curves of class $\alpha$.
\end{conj}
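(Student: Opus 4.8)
The plan is to organize the Manin components of interior classes into a commutative monoid under ``gluing and smoothing'' and then to prove, by induction on degree, that a bend-and-break principle forces this monoid to be a quotient of a free monoid in which every interior class has a unique preimage. Thus there are two essential ingredients: a structural analysis of gluing, and a degeneration statement.

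\textbf{Gluing.} For an integral class $\beta$ in the relative interior of $F(X,L)$, write $\mathcal{M}(\beta)$ for the set of Manin components of class $\beta$. Given $M_{1}\in\mathcal{M}(\beta_{1})$ and $M_{2}\in\mathcal{M}(\beta_{2})$, a main component of $M_{1}'\times_{X}M_{2}'$ generically parametrizes length-two chains of free curves, each of which is a smooth point of $\overline{M}_{0,0}(X)$ and smooths to a free curve (\cite[II.7.6 Theorem]{Kollar}); hence it lies in a component $M_{1}*M_{2}$ of $\Mor(\mathbb{P}^{1},X,\beta_{1}+\beta_{2})$. I would establish: (a) $M_{1}'\times_{X}M_{2}'$ has a unique main component, so $*$ is single-valued; (b) $M_{1}*M_{2}$ is again a Manin component; (c) $*$ is commutative and associative. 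Here (c) is essentially Lemma \ref{lemm:reorderingcomponents}, and (b) is a compatibility check --- a chain factoring through a ``bad'' thin morphism forces one of its free pieces to do so. For (a) one wants a Manin component of interior class to have an evaluation map with irreducible general fibers: by the analogue of Proposition \ref{prop:highdegreemeansequala} for a general big and nef $L$, the Stein factorization $\widetilde{\mathcal C}\to Y\xrightarrow{f}X$ of the evaluation map satisfies $a(Y,f^{*}L)=a(X,L)$, and combining $\kappa(K_{X}+a(X,L)L)=0$ with condition (iv) forces $\kappa(K_{Y}+a(Y,f^{*}L)f^{*}L)=0$; one then uses conditions (ii)--(iii) and the fact that $\beta$ lies in the interior of $F(X,L)$ to conclude $\deg f=1$, whereupon Observation \ref{obse:irreducibility} applies.

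\textbf{Bend and break and induction.} The crux is to show that every Manin component of a class $\alpha$ in the relative interior of $F(X,L)$ of large $L$-degree admits a degeneration to a length-two chain of free curves lying in Manin components of classes $\beta',\beta''$ with $\beta'+\beta''=\alpha$, both again interior, and with enough flexibility that $\beta'$ ranges over a connected set of interior classes --- a form of the Movable Bend-and-Break property. Granting this, induct on $L\cdot\alpha$: given $M_{1},M_{2}\in\mathcal{M}(\alpha)$, write $M_{i}=M_{i}'*M_{i}''$; by the inductive hypothesis the factors depend only on their numerical classes, so it remains to show $M'*M''$ is independent of the splitting $\alpha=\beta'+\beta''$, which follows by passing to a common refinement (a longer chain of interior classes summing to $\alpha$) and invoking commutativity of $*$ together with the inductive hypothesis. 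Small interior classes, below the range in which bend-and-break is available, would be treated directly.

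\textbf{Main obstacle.} The decisive difficulty is the bend-and-break step: a Movable Bend-and-Break statement of this strength --- degenerating a high-degree free rational curve, as a stable map, into a chain whose pieces remain free and inherit the thinness conditions --- is open in general and is precisely the ingredient the paper is only able to verify in low-dimensional examples (cf.\ Theorem \ref{theo:polynomialbounds} and the preceding discussion). A subtler point inside the gluing analysis is the treatment of covers $f:Y\to X$ unramified in codimension one that preserve both the pair $(a,b)$ and the face $F$: these are not excluded by conditions (ii)--(iv), so one must either show that a Manin component of interior class cannot factor through such a cover --- for instance by reducing to uniqueness on $Y$ and inducting on the cover degree --- or carry the resulting deck-group bookkeeping through the gluing argument to confirm that it produces no extra components.
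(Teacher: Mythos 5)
The statement you are asked about is a \emph{conjecture}: the paper does not prove Conjecture \ref{conj:componentsandabreaking} in general, and no complete proof is known. The paper only verifies it in special cases (del Pezzo surfaces, toric varieties, homogeneous spaces, general low-degree hypersurfaces, and --- via the arguments of Section 7 culminating in Theorem \ref{theo:componentsforfanothreefolds} --- Fano threefolds of Picard rank one and index two). Your outline is in fact a faithful abstraction of the strategy the paper deploys in that last case: gluing via main components of fiber products (Lemma \ref{lemm: chains}, Observation \ref{obse:irreducibility}), reordering (Lemma \ref{lemm:reorderingcomponents}), irreducibility of general evaluation fibers via the Stein factorization and the $a$-invariant (Proposition \ref{prop:highdegreemeansequala}), a movable bend-and-break statement (Theorem \ref{theo:polygrowthofmbar}), and an induction anchored in an explicit low-degree classification (Proposition \ref{prop: conics}). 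So as a description of the intended mechanism your proposal is on target.

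But as a proof it has genuine, and at present irreparable, gaps --- which, to your credit, you name. First, the movable bend-and-break step (every free curve of large degree degenerates, as a stable map, to a chain of free curves of bounded degree) is open in general; the paper can only establish it in dimension three using Corollary \ref{coro:free}, which in turn rests on the complete classification of subvarieties and covers with large $a$-invariant for those specific threefolds. Second, your step (a) --- uniqueness of the main component of $M_1'\times_X M_2'$ --- requires the evaluation maps to have irreducible general fibers; conditions (ii)--(iv) in the definition of a Manin component do not obviously force this, since a face-contracting or face-preserving $a$-cover with $\kappa=0$ adjoint divisor is only excluded when the $b$-values satisfy the stated inequality, and the reduction ``uniqueness on $Y$ plus deck-group bookkeeping implies uniqueness on $X$'' is not carried out anywhere. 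Third, the base case (``small interior classes treated directly'') is not available for a general uniruled $X$: even the nonemptiness of $\mathcal{M}(\alpha)$ for a single interior class is equivalent to Conjecture \ref{conj: rational}, a well-known open problem. Your write-up should therefore be presented as a reduction of the conjecture to these named open ingredients, not as a proof.
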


To obtain the correct growth rate it would be enough to show that the number of Manin components representing a numerical class is bounded above, but uniqueness holds in every example we know about.  Since Conjecture \ref{conj:componentsandabreaking} is quite strong, we will formulate a weaker version which emphasizes the relationship with the theory of rational points.  Define the counting function
$$N(X, L, q, d) = \sum_{i=1}^{d} \sum_{W \in \mathrm{Manin}_{i}} q^{\dim W}$$
where $\mathrm{Manin}_{i}$ is the set of Manin components of $\mathrm{Mor}(\mathbb P^1, X)$ parametrizing curves of $L$-degree $i r(X,-K_{X})$.  

\begin{conj}[Geometric Manin's Conjecture]
Let $X$ be a smooth projective uniruled variety of dimension $n$ and let $L$ be a big and nef $\mathbb{Q}$-divisor on $X$ such that $\kappa(K_{X} + a(X,L)L) = 0$.  
Then
\begin{equation*}
N(X, L, q, d) \sim \frac{q^{\dim X}\alpha(X,L)}{1-q^{-a(X,L)r(X,L)}} q^{da(X,L)r(X,L)}d^{b(X,L)-1}.
\end{equation*}
\end{conj}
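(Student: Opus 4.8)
The plan is to reduce the asymptotic to the \emph{Strong Manin's Conjecture} (Conjecture~\ref{conj:componentsandabreaking}) together with an existence statement for rational curves of prescribed numerical class, after which the remaining steps are Ehrhart theory and a routine geometric-series summation. First I would rewrite the counting function as a sum over degrees. By the remark following Definition~\ref{defi:manincomponent} (which invokes Theorem~\ref{theo:fanocase}), every Manin component $W$ parametrizes a dominant family, hence has the expected dimension $\dim W = -K_X\cdot\alpha_W + \dim X$, where $\alpha_W\in F(X,L)$ is its numerical class. Since $(K_X + a(X,L)L)\cdot\alpha_W = 0$ we have $-K_X\cdot\alpha_W = a(X,L)(L\cdot\alpha_W)$; and because the image of $L\cdot(-)$ on $N_{1}(X)_{\mathbb Z}$ is the cyclic group $r(X,L)\mathbb Z$ and $\alpha_W$ is a nonzero nef class, $L\cdot\alpha_W\in r(X,L)\mathbb Z_{>0}$. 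Writing $a=a(X,L)$, $r=r(X,L)$, $b=b(X,L)$, and letting $N_i$ be the number of Manin components of $L$-degree $ir$, we obtain
\[
N(X,L,q,d) = q^{\dim X}\sum_{i=1}^{d} N_i\, q^{ari}.
\]

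The heart of the argument is the asymptotic $N_i \sim \alpha(X,L)\, i^{\,b-1}$. For the upper bound, attach to each Manin component its numerical class; by Conjecture~\ref{conj:componentsandabreaking} at most one Manin component has a given class in the relative interior of $F(X,L)$, so $N_i$ is bounded above by the number of $\mathbb Z$-curve classes in $F(X,L)$ of $L$-degree $ir$, plus a contribution from classes on proper faces of $F(X,L)$; the latter is $O(i^{\,b-2})$ by Ehrhart, once one checks (using an argument in the spirit of Theorem~\ref{theo:polybounds}) that no single boundary class supports more than boundedly many Manin components. For the lower bound one needs the \emph{existence} input: every $\mathbb Z$-curve class in the relative interior of $F(X,L)$ of sufficiently large $L$-degree is the class of a Manin component. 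I would prove this by running an MMP to produce free curves generating the extremal rays of $F(X,L)$ (as in Example~\ref{facecontracting2}), writing the target class as a sum of bounded-degree free classes together with one free ``mopping-up'' summand, gluing these into a chain of free curves and smoothing via \cite[II.7.6 Theorem]{Kollar}; one then verifies that a general such smoothing is free, has vanishing intersection with $K_X + aL$, and (using the finiteness of relevant thin covers, Lemma~\ref{lemm:finitenessinsmalldegree}) does not factor rationally through any thin morphism excluded by conditions (ii)--(iv) of Definition~\ref{defi:manincomponent}, so that it is genuinely a Manin component. Granting existence, the two bounds on $N_i$ agree, and the lattice-point count in the dilated slice $i\cdot(Q_r\cap F(X,L))$ is $\alpha(X,L)\,i^{\,b-1} + O(i^{\,b-2})$ by Ehrhart's theorem with the lattice normalization of Definition~\ref{defi:alphaconstant}; hence $N_i \sim \alpha(X,L)\,i^{\,b-1}$.

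Finally I would carry out the summation. Set $x = q^{ar}$, which exceeds $1$ since $a,r>0$ and $q>1$; with $N_i = \alpha(X,L)\,i^{\,b-1}(1+o(1))$ the sum is dominated by its terms of degree near $d$, so
\[
\sum_{i=1}^{d} i^{\,b-1}x^{i} = x^{d} d^{\,b-1}\sum_{j=0}^{d-1}\left(1-\tfrac{j}{d}\right)^{\,b-1}x^{-j},
\]
and the inner sum tends to $(1-x^{-1})^{-1}$ as $d\to\infty$ by dominated convergence (each term converges to $x^{-j}$ and is dominated by $x^{-j}$). Therefore $N(X,L,q,d) \sim q^{\dim X}\alpha(X,L)\,(1-q^{-ar})^{-1}\,q^{ard}\,d^{\,b-1}$, which is the claimed formula.

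The main obstacle is, unsurprisingly, the two geometric inputs: Conjecture~\ref{conj:componentsandabreaking} (uniqueness of the Manin component in a given interior numerical class) and the existence statement above, both of which are open in general. Uniqueness is presently known only in scattered cases — del Pezzo surfaces via \cite{Testa09}, the Picard-rank-one index-two Fano threefolds treated in this paper, and some toric or homogeneous examples — and even pinning down the correct exceptional thin set, i.e.\ verifying that conditions (ii)--(iv) of Definition~\ref{defi:manincomponent} discard precisely the components that disrupt Batyrev's heuristic, is delicate; the face-contraction analysis of Section~\ref{section:facecontracting} and \cite[Proposition 8.4]{LT16} are the available tools. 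A secondary but nontrivial point is the boundary analysis in the upper bound: one must rule out an anomalously large number of Manin components whose class lies on a proper face of $F(X,L)$, which seems to require a structural description of such curves (e.g.\ that they are pulled back from a Mori contraction onto a lower-dimensional base), and one must account for the fact that Conjecture~\ref{conj:componentsandabreaking} only asserts uniqueness on the relative interior of $F(X,L)$.
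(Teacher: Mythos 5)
The statement you are asked about is a \emph{conjecture}: the paper does not prove it, and offers no argument beyond the surrounding discussion. What you have written is therefore not comparable to a proof in the paper; it is a conditional derivation, and as such it is essentially the correct heuristic that explains where every factor in the asymptotic comes from. Your reduction is sound: conditions (ii) of Definition~\ref{defi:manincomponent} together with Theorem~\ref{theo:closednotfree} force every Manin component to have the expected dimension $n + a(X,L)r(X,L)i$, the image of $L\cdot(-)$ on $N_1(X)_{\mathbb Z}$ is indeed the cyclic group $r(X,L)\mathbb Z$, the Ehrhart count of lattice points in the dilated slice produces $\alpha(X,L)i^{b-1}+O(i^{b-2})$ (one should say ``quasi-polynomial'' since $Q_r\cap F(X,L)$ is only a rational polytope, but the leading coefficient is still the normalized volume), and the geometric-series summation yielding the factor $(1-q^{-ar})^{-1}$ is correct. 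This is visibly how the leading constant in the conjecture was designed, and you have correctly identified the two genuinely open inputs: uniqueness of the Manin component in each interior class (Conjecture~\ref{conj:componentsandabreaking}) and existence of a Manin component in every interior class of large degree, the latter being at least as hard as Conjecture~\ref{conj: rational}, which the paper itself describes as a famous open problem.

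Two cautions. First, your proposed existence argument (generate extremal classes by running an MMP, glue, smooth) is more than a routine verification: producing free curves in the smooth locus of the relevant log Fano models is precisely the open problem just mentioned, and even granting free curves in each class one must still check conditions (ii)--(iv) of Definition~\ref{defi:manincomponent}, for which Lemma~\ref{lemm:finitenessinsmalldegree} alone does not suffice. Second, your boundary worry is real and is not resolved anywhere in the paper: Conjecture~\ref{conj:componentsandabreaking} is silent about classes on proper faces of $F(X,L)$, so an unconditional deduction of the asymptotic from the Strong Manin's Conjecture would additionally require a bound of the form $O(i^{b-2})$ on the total number of Manin components supported on the boundary, not merely on the number of boundary lattice classes. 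Subject to flagging these points as assumptions rather than steps, your derivation is the intended one.
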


\begin{rema}
Suppose that $X$ is a smooth projective uniruled variety and that $L$ is a big and nef $\mathbb{Q}$-divisor such that $\kappa(K_{X} + a(X,L)L)=0$.  Loosely speaking, we expect a bijection between Manin components on $X$ and components of families of rational curves on some birational model $X'$ of $X$.  In other words, our counting function should actually count curves on some variety and not just curves in some face.

More precisely, the argument of \cite[Theorem 3.5]{LT16} shows that there is a birational model $\phi: X \dashrightarrow X'$ such that $\phi$ is a rational contraction, $X'$ is normal $\mathbb{Q}$-factorial with terminal singularities, and the anticanonical divisor on $X'$ is big and nef and satisfies $-K_{X'} \equiv \phi_{*}a(X,L)L$.  We expect Manin components to correspond to components of the moduli space of rational curves on $X'$.

Here is a heuristic argument.  Fix a class $\alpha \in F(X,L)$.  Suppose that $\beta \in \Nef_{1}(X)_{\mathbb{Z}}\backslash F(X,L)$ is a curve class whose pushforward to the model $X'$ is the same as $\alpha$.  Families of rational curves of class $\beta$ have lower expected dimension than families of rational curves of class $\alpha$, so the former should form subfamilies of the latter under pushforward.  In particular, families of curves of class $\beta$ should not contribute to the count of families of rational curves on $X'$.  Thus counting families of rational curves on $X'$ should be the same as counting families of rational curves contained in $F(X,L)$.

This heuristic anticipates the existence of many families of rational curves with vanishing intersection against $K_{X} + a(X,L)L$.  But  the existence of even a single such family is a famous open problem in birational geometry (see Conjecture \ref{conj: rational}).
\end{rema}

Conjecture \ref{conj:componentsandabreaking} is known for the following Fano varieties (equipped with the anticanonical polarization):
\begin{itemize}
\item general hypersurfaces in $\mathbb{P}^{n}$ of degree $<n-1$ by \cite{RY16},
\item homogeneous varieties by \cite{Thomsen98}, \cite{KP01},
\item Fano toric varieties by work of Bourqui, e.g.~\cite{Bou16},
\item Del Pezzo surfaces by \cite{Testa09}.
\end{itemize}
In the last two cases we need to explain how to derive the result from the cited papers.

\begin{exam} \label{exam:delpezzosurface}
Let $X$ be a smooth del Pezzo surface of degree $\geq 2$.  Fix a nef curve class $\alpha$ and consider the space parametrizing dominant families of rational curves of class $\alpha$.  
For simplicity we may assume that $X$ has index $1$, i.e., it contains a $(-1)$-curve.
Then:
\begin{itemize}
\item \cite{Testa09} shows that the sublocus parametrizing maps birational onto their image is either irreducible or empty.
\item An easy deformation count shows that if there is a component parametrizing maps which are non-birational onto their image, the image must be a fiber of a map from $X$ to $\mathbb{P}^{1}$.
\end{itemize}

\cite[Theorem 6.2]{LT16} classifies the behavior of $a$ and $b$ constants for subvarieties and covers of del Pezzo surfaces.  It shows that:
\begin{itemize}
\item The only curves $C$ with $a(C,-K_{X}|_{C}) > a(X,-K_{X})$ are $(-1)$-curves.
\item There are no dominant thin maps $f: Y \to X$ such that $a(Y,-f^{*}K_{X}) = a(X,-K_{X})$ and $\kappa(K_{Y} - a(X,-K_{X})f^{*}K_{X}) = 0$.
\item Suppose $f: Y \to X$ is a dominant thin map such that $a(Y,-f^{*}K_{X}) = a(X,-K_{X})$ and $\kappa(K_{Y} - a(X,-K_{X})f^{*}K_{X}) = 1$.  The fibers of the Iitaka fibration for $Y$ are mapped under $f$ to the fibers of a map from $X$ to a curve.
\end{itemize}
Based on this analysis, Conjecture \ref{conj:componentsandabreaking} is verified by Testa's results.
\end{exam}

\begin{exam}  
Let $X$ be a smooth projective toric variety with open torus $U$.  \cite[Theorem 1.10]{Bou16} shows that every nef curve class which has intersection $\geq 1$ against every torus-invariant divisor is represented by a unique dominant family of rational curves. 
\cite[Example 8.3]{LT18} analyzes the behavior of the $a$-invariant for subvarieties and covers of toric varieties.  Based on this analysis, Conjecture \ref{conj:componentsandabreaking} is verified by Bourqui's results, i.e., the unique component representing a nef class with the above property is a Manin component.
\end{exam}

\subsection{Outline of conjecture: general case}
The formulation of Manin's Conjecture in the general case should be essentially the same.  Let $X$ be a smooth projective uniruled variety and let $L$ be a big and nef $\mathbb{Q}$-divisor such that $\kappa(K_{X} + a(X,L)L) > 0$.  After replacing $X$ by a birational model, we can assume that the Iitaka fibration for $K_{X} + a(X,L)L$ is a morphism $\pi$.  The definition of a Manin component is now a bit more subtle; one can no longer focus on dominant maps but must also account for covers of fibers of $\pi$.  However, after making this minor change, Conjecture \ref{conj:componentsandabreaking} and the behavior of the counting function $N(X,L,q,d)$ should be formulated in exactly the same way.

\begin{rema}
In the general case, Manin components should be in bijection with families of rational curves on a birational model of a fiber of the Iitaka fibration of $K_{X} + a(X,L)L$.
\end{rema}

\subsection{Manin-type bounds}

\begin{theo}
Let $X$ be a smooth projective uniruled variety and let $L$ be a big and nef $\mathbb{Q}$-divisor on $X$.  Fix $\epsilon > 0$; then for sufficiently large $q$
\begin{equation*}
N(X,L,q,d) = O \left( q^{d(a(X,L)r(X, L)+\epsilon)} \right).
\end{equation*}
\end{theo}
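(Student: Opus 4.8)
The plan is to make the sum defining $N(X,L,q,d)$ completely explicit using that Manin components are dominant families of expected dimension, and then to absorb the number of such components into the slack factor $q^{d\epsilon}$ by choosing $q$ large. I would first carry out the case $\kappa(K_X+a(X,L)L)=0$, in which Manin components are exactly those of Definition~\ref{defi:manincomponent}; the general case should go through identically after the modifications sketched in Section~\ref{section:statementofmc}.

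The first step is to compute $\dim W$ for a Manin component $W$ parametrizing curves of class $\alpha$. Condition~(i) of Definition~\ref{defi:manincomponent} forces $\alpha\in F(X,L)$, so $\alpha$ is nef and $(K_X+a(X,L)L)\cdot\alpha=0$; condition~(ii), applied to inclusions of subvarieties with higher $a$-value, forces the general curve of $W$ to meet the open set $U=X\setminus V$ of Theorem~\ref{theo:boundedavalues}. Theorem~\ref{theo:closednotfree} then applies and shows that $W$ parametrizes a dominant family with $\dim W=-K_X\cdot\alpha+\dim X$. Rewriting $-K_X\cdot\alpha=a(X,L)(L\cdot\alpha)$ and using $L\cdot\alpha=i\,r(X,L)$ when $W\in\mathrm{Manin}_i$ gives $\dim W=\dim X+a(X,L)r(X,L)i$, hence
\begin{equation*}
N(X,L,q,d)=q^{\dim X}\sum_{i=1}^{d}\#\mathrm{Manin}_i\;q^{\,a(X,L)r(X,L)i}.
\end{equation*}

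Next I would bound $\sum_{i\le d}\#\mathrm{Manin}_i$ from above by the number of components of $\Mor(\bP^1,X)$ that generically parametrize free curves of $L$-degree at most $d\,r(X,L)$. Since $L$ is big it is strictly positive on the cone of movable curve classes, so these classes lie in a fixed compact region scaled by $d$; in particular the curves in question have degree at most linear in $d$ with respect to a fixed ample divisor, and the classical estimates on the number of irreducible components of the Chow variety of curves of bounded degree (\cite{Manin95}, \cite{Kollar}, \cite{guerra99}, \cite{hwang05}) then supply a constant $C=C(X,L)>1$ with $\sum_{i\le d}\#\mathrm{Manin}_i\le C^{\,d}$. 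Combining this with the previous display and using $q^{\,a(X,L)r(X,L)i}\le q^{\,a(X,L)r(X,L)d}$ for $i\le d$,
\begin{equation*}
N(X,L,q,d)\le q^{\dim X}\,q^{\,d\left(a(X,L)r(X,L)+\log_q C\right)}.
\end{equation*}
For a given $\epsilon>0$, any $q\ge C^{1/\epsilon}$ makes the exponent at most $d\left(a(X,L)r(X,L)+\epsilon\right)$, and since $q^{\dim X}$ does not depend on $d$ this is the asserted $O$-bound.

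I expect the only substantive point to be the component count in the third step: one genuinely needs an \emph{exponential} bound in the degree, because a bound of shape $d^{cd}$ would not be swallowed by the fixed factor $q^{d\epsilon}$ as $d\to\infty$. This is precisely what the cited Chow-variety estimates give, so the polynomial bounds of Section~\ref{section:numberofcomponents} are not needed here; the remaining care is the routine verification, using bigness of $L$ together with the fact that Manin components are dominant, that only finitely many components occur in each degree and that their degree with respect to a fixed polarization grows at most linearly in $d$.
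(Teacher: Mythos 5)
Your argument is correct and is essentially the paper's own proof: the paper likewise combines the exponential bound $C^d$ on the number of components of free curves from \cite{Manin95} with Theorem \ref{theo:closednotfree} (which pins down the dimension of each Manin component as $\dim X + a(X,L)r(X,L)i$) and then absorbs $C^d$ into $q^{d\epsilon}$ by taking $q$ large. You have merely written out in full the ``standard counting argument'' that the paper leaves implicit.
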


\begin{proof}
\cite[Equation 0.5]{Manin95} shows that there is a positive constant $C$ such that the number of components of free curves of degree $d$ against a fixed big and nef divisor is at most $C^{d}$.  The result follows by combining this equation with Theorem \ref{theo:closednotfree} and a standard counting argument.
\end{proof}

It is also interesting to look for lower bounds on the number of components of rational curves.  It is conjectured that free rational curves generate the nef cone of curves -- this would follow from the existence of rational curves in the smooth locus of mildly singular Fano varieties.  \cite{TZ14} proves a weaker statement:

\begin{lemm} \label{lemm:freespan}
Let $X$ be a smooth projective rationally connected variety.  Then $N_{1}(X)$ is spanned by the classes of free rational curves. 
\end{lemm}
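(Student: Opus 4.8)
The plan is to reduce to the case of rational curves and then to upgrade these to \emph{free} rational curves by a deformation argument. Since the classes of irreducible curves span $N_{1}(X)$, it is enough to show that $[B]\in N$ for every irreducible curve $B\subset X$, where $N\subseteq N_{1}(X)$ denotes the span of the classes of free rational curves.

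First I would treat the case where $B$ is rational. Since $X$ is rationally connected, Kollár's theory of very free curves supplies very free rational curves through arbitrary general finite subsets of $X$; attaching such curves $R_{1},\dots,R_{k}$ to general points of the normalization of $B$ produces a comb, which for $k\gg 0$ smooths --- by the standard comb-smoothing/gluing lemma --- to a very free rational curve $B'$ with $[B']=[B]+\sum_{i}[R_{i}]$. As each $R_{i}$ is free, $[B]=[B']-\sum_{i}[R_{i}]\in N$, and the same argument shows that the class of \emph{any} rational curve lies in $N$.

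The real content is the reduction of the general case to the rational case, that is, the genus-reduction step carried out in \cite{TZ14}, and I would argue by induction on the geometric genus $g$ of $B$, the case $g=0$ being settled above. For $g\ge 1$ I would first attach enough very free teeth to the normalization $\nu\colon\widetilde{B}\to X$ and smooth, producing a very free stable map $\nu'\colon B'\to X$ of genus $g$ with $[B']=\nu_{*}[\widetilde{B}]+\sum_{i}[R_{i}]$, lying as a smooth point on a component of $\overline{M}_{g}(X,\beta')$ of the expected dimension. Then one uses that, $\nu'$ being sufficiently positive, this component dominates $\overline{M}_{g}$, so by properness of the Kontsevich space one may degenerate $B'$ to a stable map whose source is a maximally degenerate stable curve with all components rational. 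The non-contracted components of that limit are rational curves $D_{1},\dots,D_{m}$ with $\sum_{j}[D_{j}]=[B']$, and each $[D_{j}]\in N$ by the rational case; hence $[B]=[B']-\sum_{i}[R_{i}]=\sum_{j}[D_{j}]-\sum_{i}[R_{i}]\in N$, completing the induction.

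The hard part is exactly this dominance statement: verifying that attaching enough very free teeth forces the moduli of stable maps to surject onto $\overline{M}_{g}$, and that the degenerate limit acquires no non-rational non-contracted components. Existence of the limit is formal from properness of $\overline{M}_{g}(X,\beta')$, but identifying its component structure is the technical heart of \cite{TZ14}. The remaining ingredients --- the reduction to irreducible curve classes, the existence of very free curves through general points, and comb smoothing --- are routine.
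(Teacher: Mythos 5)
Your proof is correct and takes essentially the same route as the paper's: the paper likewise invokes \cite[Theorem 1.3]{TZ14} for the fact that rational curve classes span $N_{1}(X)$ and then upgrades these to free classes by attaching very free combs and smoothing via \cite[II.7.10 Proposition]{Kollar}. The only difference is that you unpack the genus-reduction argument internal to \cite{TZ14} rather than citing it as a black box, while correctly identifying that step as the technical heart.
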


\begin{proof}
By \cite[Theorem 1.3]{TZ14}, $N_{1}(X)_{\mathbb{Z}}$ is spanned by the classes of rational curves $\{ C_{i} \}$.  Since $X$ is rationally connected, there is a family of very free curves $C$ such that there is a very free member of the family through any point of $X$.  By gluing sufficiently many of these $C$ onto one of the $C_{i}$ to form a comb, we can deform to get a smooth curve (as in \cite[II.7.10 Proposition]{Kollar}).  It is then clear that these smoothed curves and the class of $C$ together span $N_{1}(X)$.
\end{proof}

Free curves which meet at a point can be glued to a free curve of larger degree (see \cite[II.7.6 Theorem]{Kollar}).  Thus one can generate many more dominant components of $\Mor(\mathbb{P}^{1},X)$ starting from this spanning set.  Suppose now that $X$ is a Fano variety and that $L = -K_{X}$.  If all the components of rational curves constructed by gluing are Manin components, then we obtain a lower bound of the form
\begin{equation*}
N(X,-K_{X},q,d) \geq Cq^{dr(X, -K_X)}d^{\rho(X)-1}.
\end{equation*}
for some constant $C$.  However, in general there is no reason for this construction to yield only Manin components.

\subsection{Geometric heuristics} 
In our interpretation of Manin's Conjecture one should discount contributions of $f: Y \to X$ with higher $a$ and $b$-values.  In this section, we give a heuristic argument proving that such components \emph{must} be discounted.  Since we are only interested in heuristics, in this subsection we will assume the following difficult conjecture about rational curves.

\begin{conj}
\label{conj: rational}
Let $X$ be a smooth projective variety and let $L$ be a big and nef $\mathbb{Q}$-divisor on $X$.
For each element $\alpha \in \Nef_{1}(X)_{\mathbb{Z}}$ satisfying $(K_{X} + a(X,L)L) \cdot \alpha = 0$ and with sufficiently high $L$-degree, there exists a dominant family of maps from $\mathbb{P}^{1}$ to $X$ whose images have class $\alpha$.
\end{conj}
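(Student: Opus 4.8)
\medskip
\noindent\emph{Proof proposal.}
The plan is to reduce to a mildly singular weak Fano variety, to invoke the (expected) fact that free rational curves positively generate the movable cone of curves of such a variety, and then to conclude by a gluing-and-smoothing argument together with an elementary estimate for affine semigroups. First I would reduce to the case $\kappa(K_X + a(X,L)L) = 0$: passing to a birational model we may assume the Iitaka fibration $\pi\colon X\to B$ of $K_X + a(X,L)L$ is a morphism, a class $\alpha$ as in the statement then essentially comes from a class on a general fiber $X_b$ with $\kappa(K_{X_b}+a(X_b,L|_{X_b})L|_{X_b})=0$, and a dominant family on $X_b$ sweeps out a dominant family on $X$ as $b$ varies (this is the reduction sketched in Section~\ref{section:statementofmc}). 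Once $\kappa(K_X+a(X,L)L)=0$, the argument of \cite[Theorem 3.5]{LT16} (compare the Remark following the Geometric Manin's Conjecture) produces a rational contraction $\phi\colon X\dashrightarrow X'$ with $X'$ normal, $\mathbb{Q}$-factorial and terminal and $-K_{X'}$ big and nef with $-K_{X'}\equiv\phi_* a(X,L)L$; then $a(X',-K_{X'})=1$, $F(X',-K_{X'})=\Nef_1(X')$, and the strict transform under $\phi^{-1}$ of a general member of a dominant family on $X'$ representing $\beta$ is a rational curve on $X$ whose class lies in $F(X,L)$ and pushes to $\beta$, still covering $X$ (that $\phi^{-1}$ is regular along a general deformation of such a curve needs a short verification). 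So it suffices to prove the statement for $X$ a $\mathbb{Q}$-factorial terminal weak Fano variety with $L=-K_X$; relabel and assume this.

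\medskip
\noindent\textbf{The main obstacle.} Since $X$ is of Fano type, $\Eff^{1}(X)$ is rational polyhedral, hence so is its dual cone $\Nef_1(X)=\overline{\mathrm{Mov}}_1(X)$. The heart of the argument is to show that \emph{every face $\tau$ of $\Nef_1(X)$ is generated, both as a cone and (up to a bounded index) as a sublattice of $N_1(X)_{\mathbb{Z}}$, by classes of free rational curves.} One expects this by the method of Example~\ref{facecontracting2}: given $\tau$, run a $K_X$-MMP contracting the dual face to reach a Mori fibration $g\colon\tilde X\dashrightarrow Z$; a general fiber of $g$ (if $\dim Z>0$) lies in the smooth locus and pulls back to a free rational curve of the desired class, while if $\dim Z=0$ one uses \cite[1.3 Theorem]{KM99} to produce rational curves in the smooth locus of $\tilde X$. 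Running over enough rays and faces, and supplementing with Lemma~\ref{lemm:freespan} (and \cite{TZ14}) so that the chosen classes also generate the relevant sublattices, gives the claim. This is exactly where the argument is conditional: the assertion that mildly singular Fano varieties and their MMP outputs carry rational curves in their smooth loci, and that strict transforms of such curves stay free, is the deep open problem flagged in Section~\ref{section:statementofmc}.

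\medskip
\noindent\textbf{Semigroup estimate and gluing.} Granting the previous step, fix a face $\tau\ni\alpha$ and free rational curve classes $C_1,\dots,C_m$ generating $\tau$ as a cone and generating its sublattice of $N_1(X)_{\mathbb{Z}}$. Because $-K_X$ is positive on $\Nef_1(X)\setminus\{0\}$ and the $C_i$ positively span $\tau$ while generating its lattice, a standard estimate for affine semigroups gives a $d_0$ such that every $\gamma\in\tau\cap N_1(X)_{\mathbb{Z}}$ with $-K_X\cdot\gamma\geq d_0$ can be written $\gamma=\sum_i a_i C_i$ with $a_i\in\mathbb{Z}_{\geq 0}$; taking $d_0$ uniform over the finitely many faces handles all of $\Nef_1(X)$, so in particular our $\alpha$ once its $L$-degree is large enough. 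Now choose, for each $i$, a free rational curve of class $C_i$ through a fixed general point of $X$; assembling $a_i$ copies of the curve in class $C_i$ into a comb and applying \cite[II.7.6 Theorem]{Kollar}, the comb smooths to a free rational curve of class $\alpha=\sum_i a_i C_i$, which moves in a dominant family. Unwinding the first step gives the statement on the original $X$.

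\medskip
\noindent\textbf{Where the difficulty lies.} The last step is essentially bookkeeping once the cone-generation statement is available, and the reductions of the first step are standard if slightly technical. The entire obstruction is the middle step: showing that free rational curves positively generate the movable cone of curves of a mildly singular Fano variety — equivalently, as explained in Section~\ref{section:statementofmc}, the existence of rational curves in the smooth loci of such varieties — which is why the statement remains a conjecture.
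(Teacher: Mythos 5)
There is nothing in the paper to compare your argument against: the statement is Conjecture~\ref{conj: rational}, which the authors explicitly \emph{assume} rather than prove ("we will assume the following difficult conjecture about rational curves"), noting only that it "would follow quickly from standard conjectures predicting the existence of free rational curves contained in the smooth locus of a log Fano variety." Your proposal is therefore not, and cannot be, a proof; to your credit you say so, and the open problem you isolate in your middle step --- producing free rational curves in prescribed classes on a $\mathbb{Q}$-factorial terminal weak Fano model, via rational curves in the smooth loci of MMP outputs --- is exactly the standard conjecture the authors point to. So as a reduction your proposal is consistent with the paper's own (unwritten) picture of why the conjecture should hold.

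Two caveats on the conditional parts. First, the semigroup estimate as stated is false: if $C_1,\dots,C_m$ generate a face $\tau$ as a cone and generate the lattice of its span, the semigroup $\sum_i \mathbb{Z}_{\geq 0}C_i$ contains all lattice points sufficiently \emph{deep} in $\tau$ (i.e.\ in $v+\tau$ for some $v$), but not all lattice points of large degree --- classes of high degree hugging a proper subface can fail to be nonnegative integer combinations (already for the generators $(2,0),(0,1),(1,1)$ of the quadrant, no $(2k+1,0)$ is represented). You would need to run the generation-plus-gluing argument inductively over all faces, or arrange generators on every face separately, to cover every $\alpha$ of high degree. Second, the reduction to the $\kappa=0$ case and the passage through the rational contraction $\phi$ of \cite[Theorem 3.5]{LT16} both require that strict transforms of free curves remain free and that the relevant families still dominate $X$; these are plausible but not automatic, and the paper nowhere carries them out. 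None of this changes the bottom line: the statement is open, and your write-up correctly locates where the difficulty lies.
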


Conjecture \ref{conj: rational} would follow quickly from standard conjectures predicting the existence of free rational curves contained in the smooth locus of a log Fano variety.  Assuming this conjecture, the following two statements show that thin morphisms $f: Y \to X$ such that $Y$ has higher $a,b$-values would give contributions to the counting function which are higher than the predicted growth rate. 

\begin{prop}
Assume Conjecture~\ref{conj: rational}.
Let $X$ be a smooth projective weak Fano variety and set $L = -K_{X}$.
Suppose that $f: Y \rightarrow X$ is a generically finite morphism such that $a(Y, L)> a(X,L)$.  Then there exists components of $\mathrm{Mor}(\mathbb{P}^{1},X)$ of any sufficiently high degree which factor through $f(Y)\subset X$ and have higher than the expected dimension.
\end{prop}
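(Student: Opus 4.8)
The plan is to transfer the problem to the variety $Y$ (or rather a resolution $\widetilde{Y}$ of $Y$), produce dominant families of rational curves there using Conjecture~\ref{conj: rational}, and then push these families forward to $X$ and check that the resulting components have dimension strictly exceeding the expected dimension in $X$. First I would replace $Y$ by a smooth resolution $\beta: \widetilde{Y} \to Y$ and set $g = f \circ \beta: \widetilde{Y} \to X$; by the birational invariance of the $a$-invariant (\cite[Proposition 7]{HTT15}) we still have $a(\widetilde{Y}, g^{*}L) = a(Y, f^{*}L) > a(X,L)$. Since $X$ is weak Fano, $L = -K_X$ is big and nef, hence $g^{*}L$ is big and nef on $\widetilde{Y}$; because $a(\widetilde{Y},g^{*}L) > 0$, $\widetilde{Y}$ is uniruled, and in fact since $a(\widetilde{Y}, g^*L)$ is finite, $g^*L$ is big, so $K_{\widetilde Y} + a(\widetilde Y, g^*L) g^*L$ sits on the boundary of $\Eff^1(\widetilde Y)$.

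Next I would apply Conjecture~\ref{conj: rational} on $\widetilde{Y}$ with the polarization $g^{*}L$: pick a nef integral curve class $\alpha$ on $\widetilde{Y}$ lying on the face $F(\widetilde{Y}, g^{*}L)$ — i.e.\ with $(K_{\widetilde{Y}} + a(\widetilde{Y},g^{*}L)g^{*}L)\cdot \alpha = 0$ — of arbitrarily large $g^{*}L$-degree, and obtain a dominant family of rational curves on $\widetilde{Y}$ of class $\alpha$, parametrized by a component $W_{\widetilde{Y}}$ of $\mathrm{Mor}(\mathbb{P}^{1},\widetilde{Y})$. By the formula for the expected dimension (and since the family is dominant, so equality holds by \cite{Kollar}) we have $\dim W_{\widetilde{Y}} = -K_{\widetilde{Y}} \cdot \alpha + \dim \widetilde{Y}$. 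Composing with $g$ gives a family of rational curves on $X$ sweeping out $f(Y) = g(\widetilde{Y})$; I would let $W_X$ denote a component of $\mathrm{Mor}(\mathbb{P}^{1},X)$ containing the general member of this composed family. Since $g$ is generically finite, a general curve in the family maps birationally (or at worst finitely) onto its image, so $\dim W_X \geq \dim W_{\widetilde{Y}}$, and for large degree we may arrange that the general curve is actually birational onto its image so the map on moduli is generically finite and $\dim W_X = \dim W_{\widetilde Y}$; in any case $\dim W_X \geq -K_{\widetilde Y}\cdot\alpha + \dim\widetilde Y$.

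It then remains to compare $\dim W_X$ with the expected dimension in $X$, namely $-K_X \cdot (g_*\alpha) + \dim X$. Writing $C$ for a general curve in the family on $\widetilde{Y}$ and using $\dim \widetilde{Y} = \dim X$ (as $g$ is dominant generically finite) together with $g_{*}C$ having the same $K_X$-degree computed via $g^{*}K_X$, the gap between $\dim W_X$ and the expected dimension in $X$ is at least $-(K_{\widetilde{Y}} - g^{*}K_X)\cdot C$. Now I use the chosen $\alpha$ lies on $F(\widetilde{Y}, g^{*}L)$: we have $(K_{\widetilde Y} + a(\widetilde Y, g^*L)\,g^*L)\cdot C = 0$, whereas $(g^*K_X + a(X,L)\,g^*L)\cdot C = (g^*(K_X + a(X,L)L))\cdot C \geq 0$ since $K_X + a(X,L)L$ is pseudo-effective and $C$ deforms to cover $\widetilde Y$ — wait, more carefully, since $K_X+a(X,L)L$ is a limit of effective classes and $C$ moves in a dominant family, $(g^*(K_X+a(X,L)L))\cdot C \ge 0$. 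Subtracting the two relations and using $a(\widetilde{Y},g^{*}L) > a(X,L)$ together with $g^{*}L \cdot C > 0$, we get $(K_{\widetilde{Y}} - g^{*}K_X)\cdot C = (a(X,L) - a(\widetilde Y, g^*L))\,g^*L\cdot C + (\text{nonneg}) < 0$, hence $\dim W_X > -K_X \cdot (g_*\alpha) + \dim X$, which is exactly the excess-dimension claim; and $g_*\alpha$ ranges over classes of arbitrarily high $L$-degree, giving components of every sufficiently large degree. The main obstacle I anticipate is the bookkeeping in the last paragraph: one must be careful that the component $W_X$ of $\mathrm{Mor}(\mathbb{P}^{1},X)$ does not accidentally have even larger dimension for a \emph{different} reason (it could a priori be a non-dominant family whose general member happens to deform more), but since $W_X$ by construction parametrizes curves with image in the proper closed subset $f(Y)$, Theorem~\ref{theo:boundedavalues} / Proposition~\ref{prop:expdim} already guarantee such components have excess dimension — so really the content is just producing the curves via Conjecture~\ref{conj: rational} and matching up the degrees, and ensuring the general member of the constructed family is birational onto its image so that no dimension is lost under pushforward.
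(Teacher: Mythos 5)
Your overall strategy is the same as the paper's (apply Conjecture~\ref{conj: rational} on a resolution of $Y$, push the resulting dominant families forward, and compare expected dimensions), but there is a genuine error in the setup that propagates into the key inequality. You assume at several points that $g = f\circ\beta$ is \emph{dominant} generically finite, and in particular that $\dim\widetilde{Y} = \dim X$. This is incompatible with the hypothesis: if $f$ were dominant and generically finite onto $X$, then writing $K_{\widetilde{Y}} = g^{*}K_{X} + R$ with $R$ effective shows $K_{\widetilde{Y}} + a(X,L)g^{*}L = g^{*}(K_{X}+a(X,L)L) + R$ is pseudo-effective, so $a(\widetilde{Y},g^{*}L) \leq a(X,L)$, contradicting $a(Y,L) > a(X,L)$. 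The relevant situation is therefore precisely the one where $f(Y)\subsetneq X$ and $\dim Y < \dim X$ (which is also why the conclusion speaks of components factoring through $f(Y)$).

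The consequence is that your dimension comparison omits an essential term. The correct gap is
\begin{equation*}
\dim W_{X} - \bigl(-K_{X}\cdot g_{*}\alpha + \dim X\bigr) \;\geq\; -(K_{\widetilde{Y}} - g^{*}K_{X})\cdot C \;-\; (\dim X - \dim\widetilde{Y})
\;=\; \bigl(a(\widetilde{Y},g^{*}L)-a(X,L)\bigr)\,g^{*}L\cdot C \;-\; (\dim X - \dim Y),
\end{equation*}
using $a(X,-K_{X})=1$ and $(K_{\widetilde{Y}}+a(\widetilde{Y},g^{*}L)g^{*}L)\cdot C = 0$. This is positive only once $L\cdot g_{*}C > \frac{\dim X - \dim Y}{a(Y,L)-a(X,L)}$, which is exactly the threshold the paper imposes and is the reason the statement asserts excess dimension only for \emph{sufficiently high} degree. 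Since you are already choosing classes of arbitrarily large $g^{*}L$-degree, the repair is immediate, but as written the computation both rests on a false premise and loses the quantitative content of the statement. (Your closing worry about $W_{X}$ having larger dimension ``for a different reason'' is harmless: one only needs a lower bound on $\dim W_{X}$.)
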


\begin{proof}
Choose a dominant family of rational curves $C$ on $Y$ as in Conjecture~\ref{conj: rational} such that
\begin{equation*}
L \cdot C > \frac{\dim(X) - \dim(Y)}{a(Y,L)- a(X,L)}.
\end{equation*}
By computing the expected dimension on $X$ and on $Y$ one concludes the statement.
\end{proof}

\begin{prop} \label{prop:numbercompandbvalues}
Assume Conjecture~\ref{conj: rational}.
Let $X$ be a smooth projective weak Fano variety and set $L = -K_X$.
Suppose that we have a surjective generically finite map $f: Y \rightarrow X$ which is face contracting for $L$.  There is a class $\alpha \in \mathrm{Nef}_1(X)_{\mathbb{Z}}$ such that the number of components of $\mathrm{Mor}(\mathbb P^1, X, m\alpha)$ is bounded below by a polynomial of degree in $m$ equal to the relative dimension of the faces.
\end{prop}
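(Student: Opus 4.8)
The plan is to exploit the fact that a face-contracting morphism $f: Y \to X$ identifies distinct nef curve classes on $Y$ upon pushforward, and then use Conjecture~\ref{conj: rational} to realize each of those classes by a dominant family of rational curves on $Y$. First I would pick a lattice point $\gamma$ in the relative interior of the kernel direction: since $f_{*}\colon F(Y,f^{*}L) \to F(X,L)$ is not injective, its kernel $\Gamma := \ker(f_{*}) \cap (\text{span of } F(Y,f^{*}L))$ has dimension $\delta \geq 1$ equal to the relative dimension of the faces. I would then choose a class $\alpha \in \mathrm{Nef}_1(X)_{\mathbb{Z}}$ lying in the relative interior of $F(X,L)$ and in the image $f_{*}F(Y,f^{*}L)$, chosen so that the fiber $f_{*}^{-1}(\alpha) \cap F(Y,f^{*}L)$ contains a full-dimensional (dimension $\delta$) polytope of lattice points; replacing $\alpha$ by a large multiple if necessary, the number of lattice points of $F(Y,f^{*}L)$ mapping to $m\alpha$ grows like a polynomial in $m$ of degree exactly $\delta$, by a standard Ehrhart argument (this is essentially the $\alpha(Y,f^{*}L)$ volume computation of Definition~\ref{defi:alphaconstant}).

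Next, for each such lattice class $\beta$ on $Y$ with $f_{*}\beta = m\alpha$ and $L$-degree large, Conjecture~\ref{conj: rational} (applied to $Y$ with the big and nef divisor $f^{*}L$, noting $(K_Y + a(Y,f^{*}L)f^{*}L)\cdot\beta = 0$ since $\beta \in F(Y,f^{*}L)$ and $a(Y,f^{*}L) = a(X,L)$ as $f$ is an $a$-cover) produces a dominant family of rational curves on $Y$ of class $\beta$. Pushing forward by $f$ gives a family of rational curves on $X$ of class $m\alpha$. The dominant component of $\mathrm{Mor}(\mathbb{P}^1, Y)$ of class $\beta$ has the expected dimension $-K_Y \cdot \beta + \dim Y = -K_X \cdot m\alpha + \dim X$ (using that $f$ is an $a$-cover, so Riemann--Hurwitz forces $(K_Y - f^{*}K_X)\cdot\beta = 0$ for classes in $F(Y,f^{*}L)$), which is precisely the expected dimension of a dominant component on $X$ of class $m\alpha$. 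Hence each such family pushes forward to (a dense subset of) a genuine component of $\mathrm{Mor}(\mathbb{P}^1, X, m\alpha)$.

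The final step is to argue that distinct classes $\beta, \beta'$ on $Y$ give distinct components on $X$, up to a bounded multiplicity. Two rational curves on $Y$ with different numerical classes cannot lie in the same component of $\mathrm{Mor}(\mathbb{P}^1, Y)$; and the general member of each family we constructed is free, hence a smooth point of the moduli space whose class is determined. A component of $\mathrm{Mor}(\mathbb{P}^1, X, m\alpha)$ arising as a pushforward can contain the images of only finitely many — in fact at most $\deg(f)$ many, by considering the monodromy action of $\mathrm{Gal}(Y/X)$ on the components over $X$, or more crudely by a degree count on the fibers of $f$ — distinct families from $Y$. Therefore the number of components of $\mathrm{Mor}(\mathbb{P}^1, X, m\alpha)$ is at least $\frac{1}{\deg f}$ times the number of admissible lattice classes $\beta$, which is a polynomial in $m$ of degree $\delta$ equal to the relative dimension of the faces.

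The main obstacle I anticipate is the third step: ensuring that the pushforwards of families attached to different classes on $Y$ really do land in different components of $\mathrm{Mor}(\mathbb{P}^1, X)$, rather than collapsing together, and controlling precisely how many distinct $Y$-families can share a single $X$-component. One must rule out the possibility that a single component of $\mathrm{Mor}(\mathbb{P}^1, X, m\alpha)$ simultaneously receives the images of a super-bounded number of the $\beta$-families; here the key point is that a general curve in the image family determines a unique lift to $Y$ up to the deck transformation group when $f$ is Galois (and in general one passes to the Galois closure, which only changes constants). This is exactly the type of argument made in the $\mathrm{Hilb}^2(\mathbb{P}^1\times\mathbb{P}^1)$ and weak del Pezzo examples preceding the statement, so the technique is available; the work is in making the bookkeeping uniform in $m$.
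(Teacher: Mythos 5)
Your proposal is correct and follows essentially the same route as the paper: count the lattice points of $F(Y,f^{*}L)$ in the fiber of $f_{*}$ over $m\alpha$ (the paper cites the full-rank lattice statement of \cite[Theorem 2.16]{HTT15} where you invoke an Ehrhart-type count, which amounts to the same thing), apply Conjecture~\ref{conj: rational} to each such class $\beta$ to get a dominant component of $\mathrm{Mor}(\mathbb{P}^1,Y,\beta)$, push forward, and observe that at most $\deg f$ of these can collapse to a single component of $\mathrm{Mor}(\mathbb{P}^1,X,m\alpha)$. Your added care about matching expected dimensions and the lifting/deck-transformation justification of the $\deg f$ bound only fleshes out steps the paper states more tersely.
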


\begin{proof}
Let $b$ denote the difference in dimensions between $F(Y,f^{*}L)$ and its image under $f_{*}$.  \cite[Theorem 2.16]{HTT15} shows that that lattice points $F(Y,f^{*}L) \cap N_{1}(Y)_{\mathbb{Z}}$ generate a subcone of $F(Y,f^{*}L)$ which is full rank.   Thus there is a class $\alpha \in \Nef_{1}(X)_{\mathbb{Z}}$ and a constant $C>0$ such that for sufficiently large integers $m$ there are $\geq Cm^{b} $ points of $F(Y,f^{*}L)$ mapping to $m\alpha$.

For sufficiently large $m$, Conjecture \ref{conj: rational} guarantees that for each class $\beta$ that pushes forward to $m\alpha$ there is a component $M_{\beta} \subset \mathrm{Mor}(\mathbb P^1, Y, \beta)$ parametrizing a dominant family of rational curves.  For each such $M_{\beta}$, by composing with $f$ we get a dominant family of rational curves on $X$.  At most $\deg f$ different components on $Y$ can get identified to a single component on $X$, showing that the number of different components on $X$ has the desired asymptotic growth rate.
\end{proof}

We note in passing that if Conjecture \ref{conj: rational} is true it would allow one to use facts about rational curves to deduce results about rational points.

\begin{exam}
Assuming Conjecture \ref{conj: rational}, the results of \cite{HRS04} show that a general hypersurface with degree not too large will not admit subvarieties with higher $a$-values.  Switching to the number-theoretic setting, we should then expect Manin's Conjecture to hold for such hypersurfaces with no exceptional set. Indeed, such results are obtained in the seminal work \cite{Bir61} using the circle method  when the dimension is exponentially larger than its degree.

Conversely, \cite{BV16} uses the circle method to prove statements about the behavior of $\mathrm{Mor}(\mathbb{P}^{1},X)$ over $\mathbb{C}$ for hypersurfaces $X$ of low degree.  \cite{Bou12} and \cite{Bourqui13} prove related statements in the function field setting using universal torsors.
\end{exam}

\section{Fano threefolds of Picard rank 1 and index 2}

Let $X$ be a smooth Fano threefold such that $\mathrm{Pic}(X) = \mathbb ZH$, $-K_X=2H$ and $H^{3} \geq 2$.  For such varieties the behavior of the $a$ and $b$ constants with respect to subvarieties and covers is understood completely (see \cite{LT16}).  By applying the general theory worked out before, we are able to classify all components of $\Mor(\mathbb{P}^{1},X)$ after making only a few computations in low degree.  The main result in this section, Theorem \ref{theo:componentsforfanothreefolds}, verifies Conjecture \ref{conj:componentsandabreaking} for Fano threefolds of this type.

For the rest of this section, we let $\overline{M}_{0,n}(X, d)$ denote the parameter space of $n$-pointed stable maps whose image has degree $d$ against the ample generator $H$ of $\Pic(X)$.   This space admits an evaluation map
\[
\mathrm{ev}_n : \overline{M}_{0,n}(X, d) \rightarrow X^n.
\]
First we recall the classification of Fano $3$-folds of Picard rank one and index two.

\begin{theo} {\cite[Theorem 3.3.1]{IP99}}
Let $X$ be a smooth Fano $3$-fold with $\mathrm{Pic}(X) = \mathbb ZH$,  $-K_X=2H$, and $H^3 \geq 2$. Then we have $2 \leq H^3 \leq 5$ and the $3$-fold $X$ has the following description:
\begin{itemize}
\item when $H^3 = 5$, $X$ is a section of the Grassmannian $\mathbb{G}(1,4)$ of lines in $\mathbb P^4$ by a general linear subspace of codimension $3$;
\item when $H^3 = 4$, $X$ is a complete intersection of two quadrics in $\mathbb P^5$;
\item when $H^3 = 3$, $X$ is a cubic threefold in $\mathbb P^4$;
\item when $H^3 = 2$, $X$ is a double cover of $\mathbb P^3$ ramified along a smooth quartic surface.
\end{itemize}
\end{theo}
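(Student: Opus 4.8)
The plan is to follow the classical route of Iskovskikh and Fujita: recover the description of $X$ from the numerics of $(X,H)$ via hyperplane sections. Write $d=H^3$. I would first record that, since $X$ is Fano, $h^i(X,\cO_X)=0$ for $i>0$, and that a general member $S\in|H|$ is a smooth irreducible surface --- this uses that $|H|$ is base point free, the technical heart of the whole argument (see below). Adjunction gives $K_S=(K_X+H)|_S=-H|_S$, so $S$ is a del Pezzo surface of degree $(-K_S)^2=d$, whence $1\le d\le 9$; Riemann--Roch and Kodaira vanishing on $S$ give $h^0(S,-K_S)=d+1$, and plugging this into $0\to\cO_X\to\cO_X(H)\to\cO_S(-K_S)\to 0$ together with $h^1(X,\cO_X)=0$ yields $h^0(X,H)=d+2$. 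Equivalently the $\Delta$-genus $\Delta(X,H)=\dim X+H^3-h^0(X,H)$ equals $1$ and the sectional genus is $1$: $(X,H)$ is a del Pezzo manifold in Fujita's sense.

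Next I would treat the range $d\ge 3$. Here $-K_S$ is very ample on the del Pezzo surface $S$; since $h^1(X,\cO_X)=0$ the restriction $H^0(X,H)\twoheadrightarrow H^0(S,-K_S)$ is onto, and a standard ladder argument upgrades this to very ampleness of $H$ on $X$. This embeds $X$ as a nondegenerate threefold of degree $d$ in $\bP^{d+1}$, one more than the minimal degree $(d+1)-3+1=d-1$. Varieties of this ``almost minimal'' degree are classified (del Pezzo classically for surfaces, Fujita in general), and I would run the recognition case by case: $d=3$ forces a cubic hypersurface in $\bP^4$; $d=4$ forces the base locus of a pencil of quadrics in $\bP^5$ (a Hilbert-function count exhibits the pencil, and a degree/dimension comparison identifies $X$ with the complete intersection once $X$ is smooth); $d=5$ forces a codimension-$3$ linear section of the Pl\"ucker embedding $\bG(1,4)\subset\bP^9$. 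The bound $d\le 5$ falls out at the same time: del Pezzo manifolds of degree $6$ or $7$ all have Picard rank $\ge 2$ (products of projective spaces, flag varieties, or a one-point blow-up of $\bP^3$), and degree $\ge 8$ forces $X\cong\bP^3$ with $H=\cO(2)$, whose Fano index is $4$ --- both contradict $-K_X=2H$ with $\Pic(X)=\bZ H$.

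For $d=2$, where $H$ is no longer very ample, I would argue directly: $h^0(X,H)=4$ and $|H|$ is base point free, so $|H|$ defines a morphism $\varphi\colon X\to\bP^3$, which is finite because $H$ is ample (it contracts no curve), of degree $H^3/\cO_{\bP^3}(1)^3=2$; thus $\varphi$ is a double cover and $X$ is determined by its branch divisor $B\subset\bP^3$. Comparing the ramification formula $-K_X=\varphi^*\cO(4)-R$ with $-K_X=2H=\varphi^*\cO(2)$ and using $\varphi^*B=2R$ gives $B\in|\cO_{\bP^3}(4)|$, and smoothness of the double cover $X$ is equivalent to smoothness of $B$, so $B$ is a smooth quartic surface.

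The hard part is the ingredient used everywhere above: base point freeness of $|H|$, equivalently that a general member of $|H|$ is a smooth del Pezzo surface rather than a singular or reducible divisor. This is a ``general elephant''-type statement peculiar to del Pezzo threefolds, and proving it from scratch takes real work --- it was one of the technical cores of Iskovskikh's original treatment and of the later refinements of Shokurov and Reid. The only other non-formal ingredient is the classification of threefolds of degree $\mathrm{codim}+2$ in projective space; for the degrees $3,4,5$ at hand one can argue by hand as indicated, but it is cleanest to quote Fujita's classification of del Pezzo manifolds, after which the statement reduces to the bookkeeping that $\Pic(X)=\bZ H$ singles out exactly the four families listed.
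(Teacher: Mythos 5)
The paper does not prove this statement; it is quoted verbatim from Iskovskikh--Prokhorov \cite[Theorem 3.3.1]{IP99}, so there is no internal argument to compare against. Your sketch is a faithful and essentially correct outline of the standard Iskovskikh--Fujita proof: adjunction on a general elephant $S\in|H|$ to get a del Pezzo surface of degree $d=H^3$, the computation $h^0(X,H)=d+2$ (equivalently $\Delta$-genus one), very ampleness and the recognition of varieties of degree one more than minimal for $d\ge 3$, the double-cover analysis for $d=2$, and the exclusion of $d\ge 6$ via Picard rank and index. You also correctly isolate the two genuinely non-formal inputs --- base point freeness of $|H|$ together with smoothness of the general member, and the classification of $\Delta$-genus-one (del Pezzo) manifolds --- and defer them to the literature rather than claiming to prove them, which is the honest thing to do at this level of detail. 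Since the paper itself treats the entire statement as a black box, your sketch supplies strictly more information than the source text does; just be aware that a self-contained proof would require filling in those two deferred ingredients, each of which is a substantial theorem in its own right.
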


The starting point is to understand the geometric behavior of the $a$ and $b$ invariants:

\begin{lemm} \label{lemm:abclassification}
Let $X$ be a smooth Fano $3$-fold with $\mathrm{Pic}(X) = \mathbb ZH$,  $-K_X=2H$, and $H^3 \geq 2$.

\begin{itemize}
\item There is no subvariety $Y$ with $a(Y,-K_{X}|_{Y}) > a(X,-K_{X})$. 
\item Let $W$ denote the variety of lines on $X$ and let $\mathcal U\rightarrow W$ denote its universal family with the evaluation map $s: \mathcal U \rightarrow X$.  Then $a(\mathcal{U},-s^{*}K_{X}) = a(X,-K_{X})$ and $b(\mathcal{U},-s^{*}K_{X}) = b(X,-K_{X})$.  Furthermore, any dominant thin map $f: Y \to X$ such that $a(Y,-f^{*}K_{X}) = a(X,-K_{X})$ factors rationally through $\mathcal{U}$.
\end{itemize}
\end{lemm}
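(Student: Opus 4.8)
The plan is to treat the three assertions of the lemma in turn, using the very explicit classification of $X$ (due to Iskovskikh--Prokhorov as just recalled) together with the adjunction-theoretic tools of Section~\ref{absection}. For the first bullet, $a(X,-K_X)=a(X,2H)=\tfrac12 a(X,H)$, and since $K_X=-2H$ we have $a(X,-K_X)=1$. I would argue that no subvariety $Y\subset X$ can have $a(Y,-K_X|_Y)>1$ by combining two inputs: first, \cite[Proposition 2.10]{LTT14} caps the $a$-invariant of any resolution of a $k$-dimensional $Y$ at $k+1$, which already settles curves ($a\le 2$, but we need strict inequality with $1$ — so a direct check is needed) and leaves only divisors and curves as possibly problematic; second, Lemma~\ref{lemm:adjunctionresult} and Lemma~\ref{lemm:abiggerthanr-1} applied with $H|_Y$ control the divisor (surface) and curve cases in terms of the self-intersection numbers $H|_Y^{\dim Y}$. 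Since $\rho(X)=1$ and $H$ generates $\Pic(X)$, the only surfaces in $X$ are hyperplane sections and their degenerations, whose $H$-degree is $H^3\in\{2,3,4,5\}$, far above the thresholds forced by the adjunction lemmas; for curves one notes every curve has $H$-degree $\ge 1$ and $-K_X\cdot C=2H\cdot C\ge 2>\dim(\text{resolution})+1$ only in degenerate situations, so a short case analysis (lines, conics) finishes it. In fact the cleanest route is to cite \cite[Theorem 6.2]{LT16} or the relevant propositions of \cite{LTT14} directly, since that paper already works out $a,b$ for subvarieties and covers of exactly these threefolds; I would do that and only sketch the adjunction argument for completeness.

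For the second bullet I would proceed as follows. Let $W$ be the Fano surface of lines on $X$ — a smooth surface for each value of $H^3\ge 2$ by the classical theory — and $\mathcal U\to W$ the universal family, a threefold, with $s\colon\mathcal U\to X$ the (dominant, generically finite of degree equal to the number of lines through a general point, which is finite and $\ge 1$) evaluation. Because the general fibre of $\pi\colon\mathcal U\to W$ is a line $\ell$ with $-K_X\cdot\ell = 2$, and $-s^*K_X$ restricted to such a fibre is $\mathcal O_{\mathbb P^1}(2)$, the divisor $K_{\mathcal U}+a(X,-K_X)(-s^*K_X)=K_{\mathcal U}-s^*K_X$ is relatively trivial along $\pi$ up to the twist coming from $K_{\mathbb P^1}=\mathcal O(-2)$; a direct computation (or the deformation argument of Proposition~\ref{prop:highdegreemeansequala} applied to the family of lines, which is dominant) shows $K_{\mathcal U}-s^*K_X$ is pseudo-effective, giving $a(\mathcal U,-s^*K_X)\le a(X,-K_X)$, and the reverse inequality is automatic because $s$ is finite (pullback of a pseudo-effective class stays pseudo-effective, so $a$ cannot drop). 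Hence $a(\mathcal U,-s^*K_X)=a(X,-K_X)$. For the $b$-invariant one identifies $K_{\mathcal U}-s^*K_X$ with (a multiple of) the class of the relative dualizing sheaf of $\pi$ twisted appropriately; the minimal face of $\overline{\mathrm{Eff}}^1(\mathcal U)$ containing it is spanned by the pullback of $\overline{\mathrm{Eff}}^1(W)$ together with the exceptional data, and a Picard-rank bookkeeping (using the geometric interpretation of $b$ via Picard ranks from \cite[Corollary 3.9, Lemma 3.10]{LTT14}) gives $b(\mathcal U,-s^*K_X)=b(X,-K_X)$; since $\rho(X)=1$, $b(X,-K_X)=1$, so one just needs that the relevant face of $\overline{\mathrm{Eff}}^1(\mathcal U)$ is still of codimension $1$, i.e. that $K_{\mathcal U}-s^*K_X$ lies on a facet and not in a deeper face — which follows because it is nonzero and effective but its numerical class is proportional to $\pi^*(\text{big class on }W)$ composed with the contraction $\pi$.

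The third and genuinely substantive assertion is that \emph{every} dominant thin $f\colon Y\to X$ with $a(Y,-f^*K_X)=a(X,-K_X)=1$ factors rationally through $\mathcal U$. Here the idea is: equality of $a$-invariants means $K_Y-f^*K_X=K_Y+f^*(2H)$ is pseudo-effective but has $a$-value exactly matching, so $K_Y - f^*K_X$ is pseudo-effective while $K_Y - f^*K_X - \epsilon f^*H$ is not big for small $\epsilon$; one then wants to show the only way this can happen for a genuine (non-birational) cover of a Picard-rank-one threefold with $-K_X=2H$ is that $Y$ maps, fibre by fibre, onto lines of $X$. I would invoke the structural results of \cite{LT16} — specifically the analysis there of $a$-covers of these threefolds, which I expect already contains precisely this statement — or, if a self-contained argument is wanted, argue via the face $F(\mathcal U, -s^*K_X)$ and the characterization of $a$-covers: an $a$-cover $f$ induces $f_*\colon F(Y,-f^*K_X)\to F(X,-K_X)$, and since $F(X,-K_X)$ is the ray spanned by the class of a line (as $b(X,-K_X)=1$ and the only $(K_X+(-K_X))$-trivial... wait, $a(X,-K_X)=1$ so $F$ consists of classes with $(K_X-K_X)\cdot\alpha=0$, i.e. \emph{all} of $\mathrm{Nef}_1$) — so I must instead use that $Y$ is covered by rational curves of $f$-degree mapping to low $H$-degree curves, then by Proposition~\ref{prop:highdegreemeansequala}-type reasoning these must be (generically) lines, forcing the rational map $Y\dashrightarrow W$, hence the factorization through $\mathcal U$. \textbf{This last step — ruling out $a$-covers other than $\mathcal U$ — is the main obstacle}, because it requires knowing that no ``exotic'' étale-in-codimension-one cover or branched cover of these specific Fano threefolds preserves the $a$-invariant; the cleanest resolution is almost certainly to cite \cite[Theorem 6.2]{LT16} (or its analogue) directly, and I would structure the proof to do exactly that, presenting the adjunction computations above only to make the lemma self-contained modulo that citation.
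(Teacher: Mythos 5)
Your handling of the first bullet and of the equalities $a(\mathcal U,-s^{*}K_X)=a(X,-K_X)$ and $b(\mathcal U,-s^{*}K_X)=b(X,-K_X)$ is consistent with the paper, which simply cites \cite[Section 6]{LTT14} for the first bullet and declares the second ``clear''; your adjunction sketch (Lemma \ref{lemm:adjunctionresult} together with the fact that any surface in $X$ has $H$-degree a positive multiple of $H^{3}\geq 2$, and the observation that a rational curve $C\subset X$ has $a(C,-K_X|_C)=1/(H\cdot C)\leq 1$) is essentially the content of that citation.

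The genuine gap is in the final claim — that every dominant thin $f\colon Y\to X$ with $a(Y,-f^{*}K_X)=a(X,-K_X)$ factors rationally through $\mathcal U$ — and you flag it yourself. The missing organizing idea is a trichotomy on the Iitaka dimension $\kappa(K_Y-f^{*}K_X)\in\{0,1,2\}$ after reducing to $Y$ smooth ($\kappa=3$ is excluded because the adjoint divisor cannot be big when the $a$-value is exactly $1$). When $\kappa=2$, the fibers of the associated Iitaka fibration are curves with $a$-value $1$ for $-f^{*}K_X$, hence map to curves of $H$-degree $1$, i.e.\ lines; this is what produces the rational map $Y\dashrightarrow W$ and hence the factorization through $\mathcal U$, and it is the only case your sketch gestures at. When $\kappa=1$, the general fiber would be a surface $F$ with $a$-value $1$ and $\kappa(K_F-f^{*}K_X|_F)=0$, which the arguments of \cite[Section 6.3]{LTT14} rule out. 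When $\kappa=0$ one needs \cite[Theorem 1.9]{LT16}, which says these threefolds admit no $a$-cover with Iitaka dimension $0$ — precisely the ``no exotic cover'' statement you correctly identify as the main obstacle but do not resolve. Your proposed fallback citation, \cite[Theorem 6.2]{LT16}, is the classification for del Pezzo \emph{surfaces} and does not apply here; without the case division and the two exclusion results the argument does not close.
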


\begin{proof}
The first statement is verified in \cite[Section 6]{LTT14}.  As for the second statement, it is clear that $s: \mathcal{U} \to X$ satisfies the equality of $a$ and $b$ values and we only need to prove the final claim.  It suffices to consider the case when $Y$ is smooth, and we break into cases based on the Iitaka dimension of the adjoint pair.  If $\kappa(K_{Y} - f^{*}K_{X}) = 2$, then the fibers of the map to the canonical model for this adjoint pair are curves with $a$-value $1$.  Thus their images on $X$ must be lines, and $f$ must factor through $\mathcal{U}$.  If $\kappa(K_{Y} - f^{*}K_{X}) = 1$, then the general fiber $F$ of the canonical map would be a surface with $a$-value $1$ and with $\kappa(K_{F} - f^{*}K_{X}|_{F}) = 0$.  But by the arguments of \cite[Section 6.3]{LTT14} the adjoint pair restricted to such surfaces must have Iitaka dimension $1$, showing that this case is impossible.  Finally, by \cite[Theorem 1.9]{LT16} there is no $a$-cover satisfying $\kappa(K_{Y} - f^{*}K_{X}) = 0$.  
\end{proof}

Let $\alpha$ be a curve class on $X$ such that $H \cdot \alpha = d$.  Based on the computations above, the framework of Section \ref{section:rigidcase} suggests that $\mathrm{Mor}(\mathbb P^1, X, \alpha)$ consists of two irreducible components $R_d, N_d$ such that a general morphism parametrized by $R_d$ is birational and every morphism parametrized by $N_d$ factors through $\mathcal U$. This has been proved for cubic threefolds by Starr (\cite[Theorem 1.2]{CS09}) and for complete intersections of two quadrics by Castravet (\cite{Cas04}). The goal of this section is to verify this expectation for other Fano $3$-folds of Picard rank one and index two. Even though the cases of cubic threefolds and complete intersections of two quadrics are understood, we will provide proofs of these cases as well for completeness.


We need to understand low degree curves on $X$ in order to start the induction.  The next two theorems describe the components of $\overline{M}_{0,0}(X)$ parametrizing curves of $H$-degree $1$ and $2$.
 
\begin{theo}
\label{theo: lines}
Let $X$ be a smooth Fano $3$-fold such that $\mathrm{Pic}(X) = \mathbb ZH$, $-K_X=2H$, and $H^3 \geq 2$.
The space $\overline{M}_{0,0}(X, 1)$ is isomorphic to the variety of lines on $X$.
In particular, it is irreducible and generically parametrizes a free curve.
\end{theo}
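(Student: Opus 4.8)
The plan is to show that the natural map from $\overline{M}_{0,0}(X,1)$ to the Hilbert scheme of lines on $X$ is an isomorphism, and then invoke known facts about the variety of lines. First I would recall that a degree $1$ stable map $f: C \to X$ to a Fano threefold with $\mathrm{Pic}(X) = \mathbb{Z}H$ and $-K_X = 2H$ cannot have a contracted component: any non-contracted component already accounts for the full degree $1$, and a stable map of genus $0$ with a single degree $1$ component attached to contracted trees would fail stability since a contracted $\mathbb{P}^1$ with only two special points is unstable. Hence $C = \mathbb{P}^1$ and $f$ is a degree $1$ map onto its image, i.e., a closed immersion onto a line $\ell \subset X$ (a line being an irreducible curve with $H \cdot \ell = 1$, necessarily smooth rational since $2p_a(\ell) - 2 = (K_X + \ell)\cdot \ell$ forces $p_a = 0$ once one knows $\ell$ is a line in the ambient $\mathbb{P}^N$ via $|H|$ or one of the explicit models). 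This sets up a bijective morphism $\overline{M}_{0,0}(X,1) \to W$, where $W$ is the Fano scheme of lines, and since both sides are reduced and $W$ is smooth in the relevant cases (or at worst one checks the map is an isomorphism on tangent spaces via $H^0(\ell, N_{\ell/X}) = H^0(\mathbb{P}^1, \mathcal{O}(a)\oplus\mathcal{O}(b))$ with $a+b = -K_X\cdot\ell - 2 = 0$), the map is an isomorphism.

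Next I would establish irreducibility of $W$. For $H^3 = 3$ (cubic threefold) this is the classical Fano surface of lines, which is irreducible and smooth of dimension $2$ (Altman–Kleiman, Clemens–Griffiths); for $H^3 = 4$ (intersection of two quadrics) the variety of lines is an abelian surface, hence irreducible; for $H^3 = 5$ it is $\mathbb{P}^2$; for $H^3 = 2$ (double cover of $\mathbb{P}^3$ branched over a quartic) the variety of lines is an irreducible surface, as worked out in the literature on such threefolds. Alternatively, rather than case-by-case, one can argue uniformly: by Theorem~\ref{theo:fanocase} applied with $L = -K_X$ and using Lemma~\ref{lemm:abclassification} (no subvariety has higher $a$-value, so $U = X$), every component of $\mathrm{Mor}(\mathbb{P}^1,X,\alpha)$ parametrizing curves of class $\alpha$ with $H\cdot\alpha = 1$ has the expected dimension $-K_X\cdot\alpha + \dim X - 3 = 2 + 3 - 3 = 2$ on the level of $\overline{M}_{0,0}$ and parametrizes a dominant family; then a general line is free with normal bundle $\mathcal{O}\oplus\mathcal{O}$ (balanced, since it is free and of anticanonical degree $2$ in a threefold), which pins down the deformation space and, combined with a connectedness/incidence argument, gives irreducibility.

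The last assertion — that the generic curve parametrized is free — follows because $W$ is covered by lines through a general point of $X$ in a two-dimensional family (the threefold is covered by lines in each of the four cases), so a general line $\ell$ deforms to dominate $X$; by Proposition~\ref{prop:expdim} or directly by the normal bundle computation $N_{\ell/X} = \mathcal{O}\oplus\mathcal{O}$, such an $\ell$ is free.

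\medskip

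I expect the main obstacle to be the uniform treatment of irreducibility of the variety of lines across all four values of $H^3$, and in particular the $H^3 = 2$ case of the double cover branched over a quartic surface, where irreducibility and smoothness of the Fano scheme of lines is less standard than in the hypersurface/complete-intersection cases; if a clean uniform argument via Theorem~\ref{theo:fanocase} and the $a,b$-analysis of Lemma~\ref{lemm:abclassification} does not immediately force irreducibility (it controls dimension and dominance but not a priori the number of components of the space of \emph{lines}, as opposed to general curves), one falls back on citing the explicit geometry of each model, which is routine but not self-contained.
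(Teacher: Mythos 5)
Your proposal is correct and matches the paper's approach: the paper's entire proof is a citation to Iskovskikh (\cite{Isk79}, Remark 1.5, Proposition 1.6, Remark 1.7) for the irreducibility and dominance of the family of lines, and you likewise reduce the statement to known facts about the Fano scheme of lines, merely distributing the citation over the four cases $H^3=2,3,4,5$ and filling in the routine identification of $\overline{M}_{0,0}(X,1)$ with that scheme. Your own caveat is apt --- the $a$-invariant machinery of Theorem~\ref{theo:fanocase} controls dimension and dominance but not the number of components, so the irreducibility genuinely has to come from the classical references, exactly as in the paper.
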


\begin{proof}
See \cite[Remark 1.5, Proposition 1.6, and Remark 1.7]{Isk79} for the irreducibility and dominance.
\end{proof}

\begin{prop}
\label{prop: conics}
Let $X$ be a smooth Fano $3$-fold such that $\mathrm{Pic}(X) = \mathbb ZH$, $-K_X=2H$, and $H^3 \geq 2$.
Furthermore when $H^3 = 2$, assume that $X$ is general in its moduli.
Then the space $\overline{M}_{0,0}(X, 2)$ consists of two irreducible components $\mathcal R_2, \mathcal N_2$. Any general element of $\mathcal R_2$ is a stable map from an irreducible curve to a smooth conic and any element of $\mathcal N_2$ is a degree $2$ map from $\mathbb P^1$ to a line.
\end{prop}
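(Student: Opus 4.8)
The plan is to stratify $\overline{M}_{0,0}(X,2)$ by the combinatorics of the domain together with the degree of the stabilized map onto its image, to identify the two genuine components, and then to use a dimension count to show that the strata of reducible maps contribute no further components. I would begin with the elementary \emph{trichotomy}: a stable map $f\colon C\to X$ with $H\cdot f_{*}[C]=2$ is one of (i) an isomorphism from $\mathbb P^1$ onto a smooth plane conic contained in $X$; (ii) a degree-$2$ morphism from $\mathbb P^1$ onto a line on $X$; or (iii) a map whose domain has two non-contracted components each carried isomorphically onto a line (allowing a contracted component between them, and allowing the two lines to coincide). By Lemma~\ref{lemm:abclassification}, $X$ has no subvariety with larger $a$-invariant, so Theorems~\ref{theo:closednotfree} and~\ref{theo:fanocase} guarantee that any component of $\overline{M}_{0,0}(X,2)$ whose general point has irreducible domain parametrizes a dominant family of the expected dimension $4$; moreover every component of $\overline{M}_{0,0}(X,2)$ has dimension at least $4$.

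The second step is to eliminate case (iii). Let $\mathcal U\to W$ be the universal family of lines on $X$; by Theorem~\ref{theo: lines}, $W$ is an irreducible surface, so $\mathcal U$ is irreducible of dimension $3$, and since $\mathcal U$ dominates $X$ (and $X$ contains no plane, again by Lemma~\ref{lemm:abclassification}, so no surface in $X$ is swept out by a $2$-dimensional family of lines) the evaluation $s\colon\mathcal U\to X$ is generically finite. Hence $\mathcal U\times_X\mathcal U$ is three-dimensional, two general lines on $X$ are disjoint, and the locus $I\subset W\times W$ of pairs of distinct meeting lines has dimension at most $3$. Now every type-(iii) stratum maps with finite fibers either to $I$, or (in the coincident-line case) to $W$ together with the one-dimensional boundary stratum of $\overline{M}_{0,0}(\mathbb P^1,2)$; either way its dimension is at most $3$. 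Since every component of $\overline{M}_{0,0}(X,2)$ has dimension at least $4$, the general point of every component has irreducible domain, hence falls into case (i) or (ii).

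The third step identifies the two components. In case (ii) the closure $\mathcal N_2$ of the locus of degree-$2$ covers of lines is the image of a fibration over the irreducible surface $W$ whose fibers are copies of the irreducible surface $\overline{M}_{0,0}(\mathbb P^1,2)$, so $\mathcal N_2$ is irreducible of dimension $4$, and it is a bona fide component since its general point is not birational onto its image. In case (i) it remains to prove that the family $\mathcal R_2$ of smooth conics on $X$ is irreducible of dimension $4$. For this I would use that a chain of two free lines meeting at a single point is a smooth point of $\overline{M}_{0,0}(X)$ which smooths to an irreducible conic by \cite[II.7.6 Theorem]{Kollar}: thus each conic component has such chains in its boundary, while conversely each such chain, being a smooth point, lies on a unique component, which must then be a conic component. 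So the number of conic components is at most the number of components of $I$, and I would conclude by verifying in each case that $I$ --- equivalently, the Hilbert scheme of conics on $X$ --- is irreducible: this is classical for the quintic del Pezzo threefold ($H^3=5$) and for the cubic threefold ($H^3=3$; see \cite{CS09}), it follows from \cite{Cas04} for the intersection of two quadrics ($H^3=4$), and for $H^3=2$ it should follow from the genericity hypothesis on the quartic branch divisor. Altogether $\overline{M}_{0,0}(X,2)=\overline{\mathcal R}_2\cup\overline{\mathcal N}_2$, with general members as described.

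The main obstacle is the irreducibility of the conic family, concentrated in the $H^3=2$ case: one must rule out additional components of conics living over special hyperplane sections of the double cover $X\to\mathbb P^3$, and this is precisely the point where the hypothesis that $X$ be general in moduli enters. A secondary point requiring care is the exhaustive enumeration of reducible strata (in particular those containing a contracted component), but each such stratum is visibly dominated by $I$ or by $W$ and therefore respects the dimension bound established in the second step.
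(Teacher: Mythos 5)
Your overall architecture --- isolate $\mathcal N_2$ as the multiple covers of lines, discard the reducible strata by a dimension count, and reduce everything to the irreducibility of the family of conics --- matches the paper's, and the dimension count in your second step is fine. But there are two genuine gaps. First, your argument that $\mathcal R_2$ is irreducible is logically reversed: Koll\'ar's smoothing theorem shows that a chain of two free lines deforms to an irreducible free conic, i.e., that every component of the space of such chains lies in the closure of \emph{some} conic component; it does not show that \emph{every} conic component contains such a chain in its boundary. To get that direction you would need a bend-and-break degeneration of a general conic, and you would then still have to rule out that the only limits are double covers of lines (points of $\mathcal N_2$) rather than chains of two distinct lines. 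Relatedly, your claim that irreducibility of the incidence variety $I$ of pairs of meeting lines is ``equivalent'' to irreducibility of the Hilbert scheme of conics is not correct in either direction: the boundary of an irreducible family can be reducible, and an irreducible $I$ says nothing about conic components containing no reducible conics.

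Second, and more importantly, the actual content of the proposition --- that the family of smooth conics on $X$ is irreducible --- is outsourced or merely asserted. For $H^3=3,4,5$ the references you give do cover it (though the paper argues directly: the conics form a $\mathbb P^2$-bundle over the variety of lines for the cubic, and for the $(2,2)$ case one passes to the relative family of planes in the pencil of quadrics, whose Stein factorization is an irreducible genus-$2$ curve). But for $H^3=2$ you write only that irreducibility ``should follow from the genericity hypothesis.'' This is precisely the hard case and the reason the generality assumption appears in the statement: the paper shows the relevant conics live on the degree-$2$ del Pezzo surfaces $f^{-1}(P)$ for planes $P\subset\mathbb P^3$, producing a degree-$126$ cover of $(\mathbb P^3)^{*}$, and proves this cover is irreducible by a monodromy argument --- transitivity of the monodromy on the $126$ conic classes of a degree-$2$ del Pezzo surface, transported to a general quartic surface via Koll\'ar's Lefschetz-type theorem. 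Without an argument of this kind the proposition is not established in the $H^3=2$ case.
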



\begin{proof}
We study this proposition based on case by case studies.

{\bf Complete intersections of two quadrics in $\mathbb P^5$}:
Let $\mathcal N_2$ be the union of components $M$ of $\overline{M}_{0,0}(X, 2)$ such that for any general element $(C, f)$ of $M$, there is a component of $C$ such that the restriction of $f$ is not birational to its image.
Then $(C, f)$ is a stable map of degree $2$ from $\mathbb P^1$ to a line on $X$.
It is clear that the parameter space $\mathcal N_2$ is irreducible because of Theorem~\ref{theo: lines}.
Let $\mathcal R_2$ be the union of components of $\overline{M}_{0,0}(X,2)$ not contained in $\mathcal N_2$.
By a dimension count, a general element $(C, f)$ on $\mathcal R_2$ is a stable map from $\mathbb P^1$ to a smooth conic contained in $X$. Thus to prove the irreducibility of $\mathcal R_2$, we only need to show that the family of smooth conics is irreducible.

Let $C \subset X$ be a smooth conic. Then there is a unique plane $P$ containing $C$. We denote the pencil of quadrics containing $X$ by $\{Q_\lambda\}_{\lambda \in \mathbb P^1}$.
Then there exists a unique quadric $Q_\lambda$ in this family that contains $P$. 
Indeed, let $q_\lambda$ be a quadric form associated to $Q_\lambda$
and $V$ be a $3$ dimensional vector space associated to $P$.
Since $Q_\lambda$ intersects with $P$ along $C$, the restrictions of $q_0|_V$ and $q_\infty|_V$ are proportional. Thus there exists a unique $q_\lambda$ vanishing identically on $V$.
All smooth conics in $X$ arise in this way, thus we only need to show that the family of planes contained in the quadrics $Q_\lambda$ is irreducible. Each smooth quadric contains two families of planes, and quadrics cones contain one family of planes. Let $\pi: W \rightarrow \mathbb P^1$ be the relative family of planes for $\{Q_\lambda \}$ over $\mathbb P^1$.  Its Stein factorization is a smooth irreducible genus $2$ curve $D \rightarrow \mathbb P^1$. Over $D$ each fiber of $\pi$ is irreducible. Thus $\mathcal R_2$ is irreducible.  


{\bf Cubic threefolds in $\mathbb P^4$}:
We define $\mathcal N_2$ as before and it is easy to see that this is irreducible and parametrizes degree $2$ stable maps from $\mathbb P^1$ to lines.
Let $\mathcal R_2$ be the union of the remaining components.
For a general element $(C, f) \in \mathcal R_2$, $(C, f)$ is a birational map from an irreducible curve to a smooth conic in $\mathbb P^4$. Thus we need to show that the variety of conics is irreducible.
Let $C$ be a smooth conic contained in $X\subset \mathbb P^4$. Then there exists a unique plane $P\subset \mathbb P^4$ containing $C$. The intersection of $X$ and $P$ is the union of a smooth conic and a line. Conversely if we have a line $l \subset X$ and a plane $P$ containing $l$,
then the intersection $X\cap P$ is the union of a conic and a line.
Thus the variety of smooth conics has the structure of a $\mathbb P^2$-bundle over the variety of lines, showing that it is irreducible.


{\bf Double covers of $\mathbb P^3$ ramified along smooth quartics}:
Again we define $\mathcal N_2$ as before and it is easy to see that this is irreducible and parametrizing degree $2$ stable maps from $\mathbb P^1$ to lines.
Let $\mathcal R_2$ be the union of remaining components.
For a general element $(C, f) \in \mathcal R_2$, $(C, f)$ is a birational map from an irreducible curve to a conic in $X$. Thus we need to show that the variety of conics is irreducible.
Let $f: X \rightarrow \mathbb P^3$ be the double cover ramified along a smooth quartic $Y$.
Let $C$ be a conic in $X$.
Then there are two possibilities for $C$:
\begin{itemize}
\item the image of $C$ via $f$ is a line and $C$ is a double cover of the line;
\item the image of $C$ via $f$ is a conic $D$  in $\mathbb P^3$ which is not a double line, and $D$ is tangent to $Y$ at each point of intersection.
\end{itemize}
In the first case, 
since $C$ is rational the line must have at least one point which is tangent to $Y$. However, such lines only form a $3$-dimensional family so the corresponding $C$ cannot form a component of the variety of conics. In the second case, for each conic there exists a unique plane $P$ containing $D$. For each plane $P \subset \mathbb P^3$, consider its intersection $\Gamma_P = P\cap Y$ which is a quartic plane curve and the pullback $f^{-1}(P)$ which is a degree $2$ del Pezzo surface if $\Gamma_P$ is smooth. Conics $C$ corresponding to $P$ are exactly conics in $f^{-1}(P)$ so the variety of conics is a $1$ dimensional family over $(\mathbb P^3)^{*}$: $$\mathcal W \rightarrow (\mathbb P^3)^{*}.$$
Let $\mathcal D \rightarrow (\mathbb P^3)^{*}$ be the Stein factorization which has degree $126$. We would like to show that this $\mathcal D$ is irreducible.

To see this, we use the monodromy action and Lefschetz property developed by Koll\'ar in \cite{Kollar15}. Let $\mathbb P^{14}$ be the space of plane quartic curves in $\mathbb P^2$.
Let $U \subset \mathbb P^{14}$ be the Zariski open set parametrizing smooth curves.
Let $\mathcal D' \rightarrow U$ be the degree $126$ finite cover parametrizing classes of conics on the double cover of $\mathbb{P}^{2}$ ramified along the quartic. 
It is shown in \cite[Example 8.5]{LTT14} that the fundamental group $\pi_1(U)$ acts on a fiber of $\mathcal D' \rightarrow U$ transitively.
Now consider the space $\mathbb P^{34}$ of quartic surfaces in $\mathbb P^3$ and let $V'\subset \mathbb P^{34}$ be the Zariski open set parametrizing irreducible quartic surfaces.
Let $U' \subset \mathbb P^{14}$ be the Zariski open set parametrizing irreducible plane quartics.
We consider the following evaluation map
\[
V' \times \mathrm{PGL}_4 \dashrightarrow U', \quad ([f], [A]) \mapsto [f((x_0, x_1, x_2, 0)A)].
\]
The open subset $C_{V'} \subset V' \times \mathrm{PGL}_{4}$ where the map to $U'$ is defined satisfies all the assumptions of \cite[Theorem 5]{Kollar15}.  For example, the map $C_{V'} \to U'$ is smooth since any fiber is a Zariski open subset of an affine space bundle over $\mathrm{PGL}_4$. Furthermore, since $\mathbb P^{14} \setminus U'$ has codimension $\geq 2$, the general $v \in V'$ will satisfy the Lefschetz property: the map $\pi_{1}((\{v\} \times \mathrm{PGL}_{4})^{0}) \to \pi_{1}(U)$ will be surjective for a suitable open set $(\{v\} \times \mathrm{PGL}_{4})^{0}$. Thus our assertion follows when the quartic surface $Y$ is general.



{\bf Sections of the Grassmannian $\mathbb{G}(1,4)$}: 
Again we define $\mathcal N_2$ as before and it is easy to see that this is irreducible and parametrizing degree $2$ stable maps from $\mathbb P^1$ to lines.
Let $\mathcal R_2$ be the union of remaining components.
For a general element $(C, f) \in \mathcal R_2$, $(C, f)$ is a birational map from an irreducible curve to a conic in $X$. Thus we need to show that the variety of conics is irreducible.
The result follows from \cite[Proposition 2.32]{San13}. 
\end{proof}


We are now ready to describe the induction.  Our approach is motivated by \cite{CS09}.  We start with an auxiliary lemma:

\begin{lemm} \label{lemm:pointdef}
Let $Y$ be a projective variety of dimension $n$ and let $\phi: Y' \to Y$ be a resolution.  Fix a point $p \in Y$ and suppose that there is a dominant family of rational curves through $p$ on $Y$ parametrized by a variety $W \subset \overline{M}_{0,0}(Y)$.  Let $C'$ denote the strict transform of a general curve in the family to $Y'$.  If the fiber of $\phi$ over $p$ has dimension $k$, then $\dim(W) \leq -K_{Y'} \cdot C' - 2 + k$.
\end{lemm}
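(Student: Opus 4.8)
The plan is to reduce the statement about curves through $p$ on $Y$ to a dimension count on the resolution $Y'$, exactly as in the standard bound for families of rational curves through a point. First I would replace the family $W$ by the corresponding family on $Y'$: taking strict transforms gives a family $\pi: \mathcal{C}' \to W^\circ$ of rational curves on $Y'$ over an open dense $W^\circ \subset W$, together with an evaluation map $s': \mathcal{C}' \to Y'$ whose image dominates $Y'$ (since the original family dominated $Y$ and $\phi$ is birational). Because the general curve $C'$ is free on $Y'$ — a dominant family of rational curves on a smooth variety generically parametrizes free curves — the component of $\overline{M}_{0,0}(Y')$ containing $[C']$ has exactly the expected dimension $-K_{Y'}\cdot C' + \dim Y' - 3$, and the same bound applies to $W^\circ$ (hence to $W$, since $\dim W = \dim W^\circ$).

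Next I would cut down by the point condition. The curves in $W$ all pass through $p$; their strict transforms on $Y'$ all meet the fiber $\phi^{-1}(p)$, which has dimension $k$. So consider the incidence variety of pairs (a one-pointed stable map in the relevant component of $\overline{M}_{0,1}(Y')$, with the marked point landing in $\phi^{-1}(p)$). Passing to the one-pointed space adds $1$ to the dimension; imposing that the evaluation map $\mathrm{ev}_1: \overline{M}_{0,1}(Y') \to Y'$ sends the marked point into $\phi^{-1}(p)$ is $(\dim Y' - k)$ conditions, using that $\mathrm{ev}_1$ is flat (indeed smooth) along the locus of free curves by \cite[II.3.5.4 Corollary]{Kollar}. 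Therefore the locus of one-pointed free curves through $\phi^{-1}(p)$ has dimension at most
\[
\bigl(-K_{Y'}\cdot C' + \dim Y' - 3\bigr) + 1 - (\dim Y' - k) = -K_{Y'}\cdot C' - 2 + k.
\]
Finally, forgetting the marked point is a surjection from this locus onto (an open dense subset of) $W$, and generically finite when the marked point is constrained to a positive-dimensional fiber only up to the $k$ already accounted for; in any case it does not increase dimension, so $\dim W \leq -K_{Y'}\cdot C' - 2 + k$.

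The main technical point to get right is the flatness/smoothness of the evaluation map $\mathrm{ev}_1$ at the free curves in question, which is what licenses the "$(\dim Y' - k)$ conditions" step; this is precisely the content of \cite[II.3.5.4 Corollary]{Kollar} applied on the smooth variety $Y'$, so the obstacle is bookkeeping rather than a genuine difficulty. A secondary subtlety is ensuring the general member $C'$ really is free on $Y'$ — this follows because $W$ dominates $Y$ and hence the strict-transform family dominates $Y'$, and a dominant family on a smooth projective variety generically consists of free curves (\cite{Kollar}). With these two inputs the chain of inequalities above is routine.
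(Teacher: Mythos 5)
Your proposal is correct and follows essentially the same route as the paper: pass to strict transforms on $Y'$, note the family dominates $Y'$ so the general member is free and the component has the expected dimension, and then account for the point condition being imposed only up to the $k$-dimensional fiber $\phi^{-1}(p)$. The paper phrases the last step by fixing a general point $p'$ in the fiber (giving the bound $-K_{Y'}\cdot C'-2$) and letting $p'$ vary, whereas you run it through $\overline{M}_{0,1}(Y')$ and flatness of $\mathrm{ev}_1$ on the free locus; these are the same count.
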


\begin{proof}
Let $W'$ be the variety parametrizing deformations of the strict transform $C'$.  Since this family dominates $Y'$, a general member is free.  Thus the dimension of the sublocus $W'$ parametrizing curves through a general fixed point $p'$ in the preimage of $p$ has at most the expected dimension $-K_{Y'} \cdot C' - 2$.  The statement is now clear.
\end{proof}

\begin{theo} \label{theo:threefoldfinitebadpoints}
Let $X$ be a smooth Fano $3$-fold such that $\mathrm{Pic}(X) = \mathbb ZH$, $-K_X=2H$, and $H^3 \geq 2$.
Furthermore when $H^3 = 2$, assume that $X$ is general in its moduli.
Let $\alpha$ denote a nef curve class and let $d$ denote its anticanonical degree.  Suppose that $W$ is a component of $\overline{M}_{0,0}(X,\alpha)$ and let $W_{p}$ denote the sublocus parametrizing curves through the point $p \in X$.  There is a finite union of points $S \subset X$ such that
\begin{itemize}
\item $W_{p}$ has the expected dimension $d - 2$ for points $p$ not in $S$, and
\item $W_{p}$ has dimension at most $d - 1$ for points $p \in S$.
\end{itemize}
Furthermore for $p \not \in S$ the general curve parametrized by $W_{p}$ is irreducible.
\end{theo}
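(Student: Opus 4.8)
The plan is to argue by induction on the anticanonical degree $d = -K_X\cdot\alpha$, the base cases $d=2$ and $d=4$ being read off directly from Theorem~\ref{theo: lines} and Proposition~\ref{prop: conics}. First I would fix the numerology. By the first bullet of Lemma~\ref{lemm:abclassification} there is no subvariety of $X$ with larger $a$-invariant for $-K_X$, so Theorem~\ref{theo:fanocase} applies: $W$ parametrizes a dominant family of rational curves of the expected dimension $\dim W = -K_X\cdot\alpha+\dim X-3 = d$, and the generic member of $W$ is free with irreducible domain. Let $W^{(1)}\subset\overline{M}_{0,1}(X,\alpha)$ be the component over $W$, with evaluation map $\mathrm{ev}\colon W^{(1)}\to X$; then $\dim W^{(1)}=d+1$, $\mathrm{ev}$ is dominant with generic fibre of dimension $d-2$, and $\dim W_p=\dim\mathrm{ev}^{-1}(p)$ for each $p$ since an irreducible curve meets $p$ in finitely many points. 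I would let $S$ be the union of $\{p\in X : \dim\mathrm{ev}^{-1}(p)\ge d-1\}$ with the locus of points through which the general curve of $W$ is not free and irreducible; this is closed by upper semicontinuity and, since $\mathrm{ev}$ is dominant, a proper subset of $X$.

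The bound $\dim W_p\le d-1$ for all $p$ would come from Lemma~\ref{lemm:pointdef} together with adjunction. Fix $p$ and let $Z_p$ be the closure of the union of curves of $W$ through $p$. If $Z_p=X$, then Lemma~\ref{lemm:pointdef} applied with $Y=Y'=X$ and $k=0$ gives $\dim W_p\le -K_X\cdot\alpha-2 = d-2$, so $p\notin S$. If $Z_p\subsetneq X$, let $g\colon Z'\to Z_p$ be a resolution and $C'$ the strict transform of a general curve of $W$ through $p$; since $a(Z_p,-K_X|_{Z_p})\le a(X,-K_X)=1$ by Lemma~\ref{lemm:abclassification}, the divisor $K_{Z'}-g^*K_X$ is pseudo-effective, hence pairs nonnegatively with the movable curve class of $C'$, giving $-K_{Z'}\cdot C'\le -g^*K_X\cdot C' = d$. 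Feeding this into Lemma~\ref{lemm:pointdef} yields $\dim W_p\le d-2+k$ with $k\le\dim Z_p-1$, so $\dim W_p\le d-2$ when $\dim Z_p\le 1$ and $\dim W_p\le d-1$ when $\dim Z_p=2$. Moreover if $p\in S$ then all of these inequalities are equalities: $Z_p$ is a surface, $g$ contracts a curve over $p$, $-K_{Z'}\cdot C'=d$, and (using $-K_X=2H$) one gets $a(Z',g^*H|_{Z'})=2$ with the class of $C'$ lying on the face $F(Z',g^*H|_{Z'})$.

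The crux is the finiteness of $S$. Suppose $\dim S\ge 1$ and pick an irreducible curve $S_0\subseteq S$. Then $\mathrm{ev}^{-1}(S_0)$ contains a component $\mathcal T$ with $\dim\mathcal T\ge\dim S_0+(d-1)\ge d$; since $\mathrm{ev}$ is dominant onto $X\ne S_0$ we must have $\dim S_0=1$, $\dim\mathcal T=d$, and the forgetful map $\mathcal T\to W$ dominant, being generically finite onto an image of dimension $d=\dim W$. Hence every curve of $W$ meets $S_0$, and $\bigcup_{p\in S_0}Z_p=X$ (otherwise $W$ would fail to dominate $X$), so the $Z_p$ form a one-parameter family of surfaces sweeping out $X$, each covered by a maximal family of rational curves through $p$ and with $a(Z'_p,g^*H|_{Z'_p})=2$. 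Applying the classification of Lemmas~\ref{lemm:adjunctionresult} and~\ref{lemm:abiggerthanr-1} to $Z'_p$ bounds $(g^*H|_{Z'_p})^2$ and pins down its birational type (either a blow-up of $\mathbb{P}^2$ on which $g^*H$ is pulled back from $\mathcal{O}(2)$, or a surface ruled by lines of $X$); running through the four cases $H^3\in\{2,3,4,5\}$ one finds that no such family of surfaces can exist on $X$ --- for $H^3=5$ already on degree grounds, and otherwise because the induced dominant generically finite map from the associated total space would be an $a$-cover not factoring through the family of lines, contradicting the last assertion of Lemma~\ref{lemm:abclassification} (or, in some cases, one derives a direct contradiction with $\mathrm{Pic}(X)=\mathbb{Z}H$). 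I expect this case analysis to be the main obstacle.

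Finally, the points through which the general curve of $W$ is reducible also form a finite set: a reducible member of $W$ is a gluing of stable maps of smaller $H$-degree, so such a point must lie among the ``bad points'' of one of the finitely many relevant lower-degree components, which are finite by the inductive hypothesis. After enlarging $S$ by this finite set, for $p\notin S$ we have $\dim\mathrm{ev}^{-1}(p)=d-2$, so a general point of $\mathrm{ev}^{-1}(p)$ is a general point of the irreducible variety $W^{(1)}$ and hence maps to a general --- in particular free and irreducible --- curve of $W$, which is exactly the last assertion.
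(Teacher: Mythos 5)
Your setup and your bound $\dim W_p \le d-1$ via Lemma~\ref{lemm:pointdef} and pseudo-effectivity of $K_{Z'}-g^*K_X$ match the paper's argument for components of $W_p$ whose general member is irreducible and sweeps out a surface. But the crux --- finiteness of $S$ --- is where your proposal has a genuine gap. You propose to rule out a curve $S_0\subseteq S$ by classifying the resulting one-parameter family of surfaces $Z_p$ with $a(Z_p,H|_{Z_p})=2$, citing Lemmas~\ref{lemm:adjunctionresult} and~\ref{lemm:abiggerthanr-1}; neither applies here. Lemma~\ref{lemm:adjunctionresult} requires $a(Y,H)>\dim Y=2$, while you only have $a=2$, and Lemma~\ref{lemm:abiggerthanr-1} requires $\kappa(K_Y+a(Y,H)H)=0$, whereas for the surfaces in question the adjoint pair has Iitaka dimension $1$ (this is exactly what the proof of Lemma~\ref{lemm:abclassification} establishes). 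So the case analysis you defer to is not just ``the main obstacle'' --- the tools you name cannot carry it out. The paper's route is different and much shorter: in the equality case $\dim W'=d-1$ one gets $(K_{Y'}-\phi^*K_X)\cdot C'=0$, so $C'$ is a fiber of the Iitaka fibration of $K_{Y'}-\phi^*K_X$, forcing $-K_{Y'}\cdot C'=-K_X\cdot C=2$; the curves are therefore lines, and finiteness of $S$ reduces to the base case, namely that the family of lines is an irreducible surface (Theorem~\ref{theo: lines}), so only finitely many points lie on a one-dimensional family of lines. No classification of surfaces is needed.

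Two further points. First, Lemma~\ref{lemm:pointdef} only applies to components of $W_p$ whose general member has irreducible domain, so your uniform bound $\dim W_p\le d-1$ does not cover components generically parametrizing reducible stable maps; your final paragraph addresses these only by asserting that such a point ``must lie among the bad points of lower-degree components,'' which is not established --- excess dimension can also arise from the moduli of the attachment points, and the paper controls this with an explicit three-case count (nodes mapping to $p$, to $S$, or to a general point) following Castravet--Starr, which also needs the bound $d-1$ at points of $S$ from the inductive hypothesis. Second, your opening appeal to Theorem~\ref{theo:fanocase} to conclude that the generic member of $W$ is free with irreducible domain is not available: that theorem concerns $\mathrm{Mor}(\mathbb{P}^1,X)$, a component of $\overline{M}_{0,0}(X,\alpha)$ may generically parametrize multiple covers (as $\mathcal{N}_d$ does) or a priori reducible maps, and the assertion that every component generically parametrizes free curves is Corollary~\ref{coro:free}, i.e.\ a consequence of the theorem you are proving. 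The paper's proof is structured so as not to assume this.
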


\begin{proof}
By \cite[II.3.5.4 Corollary and II.3.10.1 Corollary]{Kollar}, there is a proper closed subset $Q \subsetneq X$ which contains any non-free component of a member of the family of curves in $W$.

The proof is by induction on $d$.  The base case is when $W$ is the family of lines.  By Theorem \ref{theo: lines} $W$ is irreducible and has dimension $2$, so there are only finitely many points in $X$ which are contained in a one-parameter family of lines.  We let $S$ denote this finite set.

We now prove the induction step.  Let $W'$ be a component of $W_{p}$.  First suppose that the general curve parametrized by $W'$ is irreducible.  If the general curve parametrized by $W'$ is free, then $W'$ has the expected dimension, so we may suppose otherwise.  We divide into cases based on the dimension of the subvariety swept out by curves in $W'$.

Suppose that the curves parametrized by $W'$ all map to a fixed curve $Y \subset X$. 
If the general curve maps $r$-to-$1$, then the dimension of $W'$ is at most $2r-2$.  Note that $d = r (-K_{X} \cdot Y)$.  Comparing against the expected dimension $-K_{X} \cdot C - 2$, we see that $W'$ can have larger than expected dimension only when $-K_{X} \cdot Y = 1$.  However, in this case we have $a(Y,-K_{X}|_{Y}) = 2$, violating Lemma \ref{lemm:abclassification}. 

Next suppose that the curves parametrized by $W'$  dominate an irreducible projective surface $Y \subset Q$.
Let $\phi: Y' \to Y$ be a resolution; applying Lemma \ref{lemm:pointdef} we see that
\begin{equation*}
\dim(W') \leq -K_{Y'} \cdot C' - 1.
\end{equation*}
If $\dim(W') \geq -K_{X} \cdot C$, then by the equation above we see that $(K_{Y'} - \phi^{*}K_{X}) \cdot C'$ is negative.  This implies that $a(Y,-K_{X}) > 1$, an impossibility by Lemma \ref{lemm:abclassification}.  This proves that $\dim(W_{p}) \leq -K_{X} \cdot C - 1$.  We also need to characterize the equality case.  When $K_{Y'} \cdot C = K_{X} \cdot C$, it follows that $a(Y',-K_{X}) = 1$.  By the earlier classification this means that $K_{Y'} - K_{X}$ has Iitaka dimension $1$.  Since $C$ has vanishing intersection against this divisor, it is a fiber of the Iitaka fibration.  We conclude that $2 = -K_{Y'} \cdot C = -K_{X} \cdot C$. By Theorem~\ref{theo: lines}, this means that $p \in S$. 

Now suppose that the curves parametrized by $W'$ are reducible.  The dimension counting arguments of \cite[Proposition 2.5]{CS09} show how to deduce the desired conclusion for curves parametrized by $W'$ from the same properties for their irreducible components.  For clarity we will outline the argument here.
Assume that our assertions hold for any stable maps of degree less than $d$.
Suppose that $p \not\in S$ and let $f: C \rightarrow X$ be a general member of $W'$.
We analyze case-by-case:
\begin{enumerate}
\item Suppose that a node of $C$ maps to the point $p$. Let $D$ be a maximal connected subset of $C$ contracted to the point $p$. Let $C_1, \cdots, C_u$ be the closures of the connected components of $C \setminus D$.  Set $d_i =K_X.C_i$. Then the induction hypothesis implies that the dimension of $W'$ is bounded by
\[
\sum_{i = 1}^u (d_i-2) + u-2
\]
where the term $u-2$ accounts for the dimension of the marked point and the points of attachment of $D$ with $C_i$. Since $d = \sum_i d_i$, we conclude that the dimension of $W'$ is bounded by
\[
d-u-2.
\]
In particular $W'$ can not have larger than the expected dimension.  
\item Suppose that a node of $C$ maps to a point $p_i$ contained in $S$. Let $D$ be the maximal connected subset of $C$ contracted to the point $p_i$. Let $C_1, \cdots, C_u$ be the closures of the connected components of $C \setminus D$ and let $d_i$ be the degree of $C_i$. Suppose that the inverse image of $p$ is contained in $C_1$. Then by the induction hypothesis, the dimension of $W'$ is bounded by
\[
d_1-2 + \sum_{i = 2}^u d_i-1 + \max\{0, u-3\} = d-u -1 +\max\{0, u-3\}
\]
where the term $\max\{0, u-3\}$ accounts for the dimension of the moduli of the points of attachment.  
In particular $W'$ can not have larger than the expected dimension.  
\item Suppose that a node of $C$ maps to a point $q\not\in \{p\} \cup S$. Let $D$ be the maximal connected subset of $C$ contracted to the point $q$. Let $C_1, \cdots, C_u$ be the closures of the connected components of $C \setminus D$ and let $d_i$ be the degree of $C_i$. Suppose that the inverse image of $p$ is contained in $C_1$. Then by the induction hypothesis, the dimension of $W'$ is bounded by
\[
d_1-2 + \sum_{i = 2}^u (d_i-2) + \max\{0, u-3\} + 1 = d -2u + 1 + \max\{0, u-3\}
\]
where the term $\max\{0, u-3\} + 1$ accounts for the dimension of the moduli of the points of attachment. In particular $W'$ can not have larger than the expected dimension. 
\end{enumerate}

Together these three cases exhaust all possibilities for the node of $C$, and our claim follows when $p \not\in S$. The case of $p \in S$ is similar; we refer readers to \cite[Proposition 2.5]{CS09} for more details.  In particular, the above discussion shows that the general curve parametrized by $W'$ for points $p \not \in S$ is irreducible.
\end{proof}

We see as an immediate consequence:

\begin{coro}
\label{coro:free}
Let $X$ be a smooth Fano $3$-fold such that $\mathrm{Pic}(X) = \mathbb ZH$, $-K_X=2H$, and $H^3 \geq 2$.
Furthermore when $H^3 = 2$, assume that $X$ is general in its moduli.  For any $\mathbb{Z}$-curve class $\alpha$, if $\overline{M}_{0,0}(X,\alpha)$ is non-empty then every component generically parametrizes free curves and has the expected dimension.
\end{coro}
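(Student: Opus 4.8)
The plan is to read the corollary off from Theorem~\ref{theo:threefoldfinitebadpoints}, which already packages everything needed. First I would record that since $\mathrm{Pic}(X)=\mathbb{Z}H$ with $H$ ample, the lattice $N_1(X)_{\mathbb{Z}}$ is generated by the class $\ell$ of a line; hence every effective curve class on $X$ has the form $m\ell$ with $m\ge 1$, so it is nef and satisfies $-K_X\cdot(m\ell)=2m\ge 2$. Consequently, if $\overline{M}_{0,0}(X,\alpha)$ is nonempty then $\alpha$ is nef and $d:=-K_X\cdot\alpha\ge 2$, and Theorem~\ref{theo:threefoldfinitebadpoints} applies to every component $W\subset\overline{M}_{0,0}(X,\alpha)$.

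Next, fixing such a $W$, I would use Theorem~\ref{theo:threefoldfinitebadpoints} to produce a finite set $S\subset X$ with $\dim W_p=d-2\ge 0$ for all $p\notin S$. In particular $W_p\neq\emptyset$ for every $p\notin S$, hence for a general $p\in X$ since $S$ is finite and $X$ is irreducible; equivalently, the evaluation image of the component of $\overline{M}_{0,1}(X,\alpha)$ lying over $W$ is closed and contains the dense open set $X\setminus S$, so equals $X$. Thus $W$ parametrizes a dominant family. I would also invoke the last sentence of Theorem~\ref{theo:threefoldfinitebadpoints}: for general $p\notin S$ the general member of $W_p$ has irreducible domain, so the open locus $W^{\circ}\subset W$ parametrizing stable maps with irreducible domain meets $W_p$ and is therefore dense in $W$ (a nonempty open subset of the irreducible variety $W$). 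Hence the general point of $W$ is a map from $\mathbb{P}^1$.

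Finally, I would combine these two facts. Since its general member has irreducible domain, $W$ corresponds, under the $\mathrm{PGL}_2$-quotient, to a component of $\mathrm{Mor}(\mathbb{P}^1,X,\alpha)$. By the first bullet of Lemma~\ref{lemm:abclassification} there is no subvariety of $X$ with $a$-value larger than $a(X,-K_X)$, so the open set $U$ of Theorem~\ref{theo:fanocase} is all of $X$; that theorem then gives that every component of $\mathrm{Mor}(\mathbb{P}^1,X,\alpha)=\mathrm{Mor}_U(\mathbb{P}^1,X,\alpha)$ has the expected dimension $-K_X\cdot\alpha+\dim X=d+3$ and parametrizes a dominant family, whose general member is a free curve by \cite{Kollar}. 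Passing back to $\overline{M}_{0,0}(X,\alpha)$, this says precisely that $W$ has the expected dimension $d$ (note $\dim X-3=0$) and generically parametrizes free curves.

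As for difficulty, there is essentially nothing to overcome once Theorem~\ref{theo:threefoldfinitebadpoints} is in hand --- this is exactly why the corollary is immediate. The one point requiring a moment's care is excluding components of $\overline{M}_{0,0}(X,\alpha)$ whose general member has \emph{reducible} domain, and the irreducibility clause at the end of Theorem~\ref{theo:threefoldfinitebadpoints} is precisely what handles this, so no extra argument is needed.
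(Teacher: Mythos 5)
Your proof is correct, and since the paper records this corollary as an unproved ``immediate consequence'' of Theorem~\ref{theo:threefoldfinitebadpoints}, the natural comparison is with the implicit intended argument, which is a direct dimension count: one always has $\dim W \geq d$ for a component $W \subset \overline{M}_{0,0}(X,\alpha)$, and the fiber bound $\dim W_p \leq d-2$ for $p \notin S$ forces the evaluation map to dominate $X$ and forces $\dim W = d$, after which generic irreducibility of the domain gives freeness. Your route is genuinely a bit different: from Theorem~\ref{theo:threefoldfinitebadpoints} you extract only the clause that the general member of $W_p$ has irreducible domain, use it to identify $W$ with a component of $\Mor(\mathbb{P}^1,X,\alpha)$, and then let Lemma~\ref{lemm:abclassification} together with Theorem~\ref{theo:fanocase} supply both the expected dimension and dominance. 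This is valid and has the virtue of showing that the quantitative fiber-dimension bounds of Theorem~\ref{theo:threefoldfinitebadpoints} are not actually needed here, only its irreducibility statement. Two minor remarks. First, your step ``$W_p \neq \emptyset$ for every $p \notin S$'' reads the phrase ``has the expected dimension $d-2$'' literally; the proof of that theorem really only establishes the upper bound on $\dim W_p$ when $W_p$ is nonempty. This costs you nothing: you re-derive dominance from Theorem~\ref{theo:fanocase} anyway, and the nonemptiness you genuinely need (some $W_p$ with $p \notin S$ meets $W$) is automatic because $S$ is finite and every parametrized stable map has one-dimensional image. Second, when the general member of $W$ is a multiple cover (as for the components $\mathcal{N}_d$), the $\mathrm{PGL}_2$-orbits in $\Mor(\mathbb{P}^1,X,\alpha)$ still have dimension $3$ because the stabilizers are finite, so the dimension bookkeeping in your passage between $\Mor(\mathbb{P}^1,X,\alpha)$ and $\overline{M}_{0,0}(X,\alpha)$ is unaffected; it would be worth saying this explicitly since the corollary must cover such components.
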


Applying the arguments of \cite[Corollary 2.6 and Corollary 2.7]{CS09}, we now obtain:

\begin{theo} \label{theo:polygrowthofmbar}
Let $X$ be a smooth Fano $3$-fold such that $\mathrm{Pic}(X) = \mathbb ZH$, $-K_X=2H$, and $H^3 \geq 2$.
Furthermore when $H^3 = 2$, assume that $X$ is general in its moduli.  Then any free curve on $X$ deforms into a chain of free curves of anticanonical degree $\leq \dim(X)+1$.
\end{theo}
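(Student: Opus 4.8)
The plan is to induct on the $H$-degree $e$ of the free curve, following the strategy of \cite[Corollaries 2.6 and 2.7]{CS09} with Theorem~\ref{theo:threefoldfinitebadpoints} in the role of \cite[Proposition 2.5]{CS09}. Since $-K_X=2H$, a curve has anticanonical degree $\le\dim(X)+1=4$ exactly when its $H$-degree is $\le 2$, so it is enough to prove: every component $W$ of $\overline{M}_{0,0}(X,e)$ lies in the same irreducible component of $\overline{M}_{0,0}(X)$ as some chain of free curves each of $H$-degree $\le 2$, i.e.\ a chain of free lines and free conics. Once this is known, a general (hence free) member of $W$ and such a chain lie in the same component; since a chain of free curves is a smooth point of $\overline{M}_{0,0}(X)$, it smooths to an irreducible free curve by \cite[II.7.6 Theorem]{Kollar}, and therefore every free curve deforms into it. The base cases $e=1,2$ are immediate from Theorem~\ref{theo: lines}, Proposition~\ref{prop: conics} and Corollary~\ref{coro:free}: every component of $\overline{M}_{0,0}(X,1)$ or $\overline{M}_{0,0}(X,2)$ generically parametrizes a free curve of $H$-degree $\le 2$, which is a chain of length one.

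For the inductive step I would fix $e\ge 3$ and a component $W$ of $\overline{M}_{0,0}(X,e)$; by Corollary~\ref{coro:free} it generically parametrizes free curves and has the expected dimension $2e$. The central task is to exhibit inside $W$ a point parametrizing a nodal stable map $f_0\colon C_0\cup C_1\to X$ that is a chain of two free curves with $C_1$ a line and $C_0$ of $H$-degree $e-1$. Granting this, $C_0$ determines a point of some component $W_0$ of $\overline{M}_{0,0}(X,e-1)$, which by the inductive hypothesis lies in the same component as a chain of free lines and conics. Since $f_0$ is a chain of free curves it is a smooth point of $\overline{M}_{0,0}(X,e)$ and so lies on the unique component $W$; and the locus of stable maps obtained by attaching a free line to a varying member of $W_0$ is irreducible through $f_0$ (by irreducibility of the family of lines, Theorem~\ref{theo: lines}, and a fiber-product argument as in Observation~\ref{obse:irreducibility}). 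Deforming $C_0$ within this locus into the chain of free lines and conics, while keeping $C_1$ attached, shows that $W$ itself contains a chain of the required type.

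To produce $f_0$ I would run the dimension bookkeeping of \cite[Corollary 2.6]{CS09}. Let $S\subset X$ be the finite set and $Q\subsetneq X$ the proper closed subset attached to $W$ in Theorem~\ref{theo:threefoldfinitebadpoints} and its proof, and recall from Lemma~\ref{lemm:abclassification} that $X$ carries no subvariety with $a$-value exceeding $a(X,-K_X)=1$. Since $W$ generically parametrizes free, hence dominant, curves, for a general $p\in X$ the sublocus $W_p$ of curves through $p$ is non-empty of the expected dimension $2e-2$, its general member is an irreducible free curve, and the lines through $p$ form a non-empty finite set because the universal family of lines dominates $X$. Comparing $\dim W_p$ with the dimension of the sublocus of $\overline{M}_{0,0}(X,e)$ consisting of maps of the form (free line through $p$) $\cup$ (free degree-$(e-1)$ curve through the node), and using that any reducible limit of free curves in $W$ has all of its components free --- which is exactly what the node-by-node case analysis in the proof of Theorem~\ref{theo:threefoldfinitebadpoints} yields, by the inductive hypothesis applied in smaller degrees --- one finds that $\overline{W}$ meets the boundary divisor $\Delta_{1,e-1}$ of stable maps whose domain has two components of $H$-degrees $1$ and $e-1$, and that a general point of $\overline{W}\cap\Delta_{1,e-1}$ has the required form $f_0$.

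The step I expect to be the main obstacle is precisely this last one: guaranteeing that the degenerate chain $f_0$ lies in $\overline{W}$ rather than in a boundary component of $\overline{M}_{0,0}(X,e)$ disjoint from $W$. This is where the sharpness of Theorem~\ref{theo:threefoldfinitebadpoints} --- that curves through a general point never exceed the expected dimension and that the general such curve is irreducible --- is indispensable, since it forces the boundary to be a genuine divisor in $W$ and pins down the combinatorial type of its general point, exactly as in \cite{CS09}. The remaining ingredients (the base case, the transitivity of ``lying in the same irreducible component'', and the final smoothing of a chain of free curves) should be routine.
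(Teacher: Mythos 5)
Your overall strategy is the right one and is close to what the paper does: induct on degree, exhibit inside each component $W$ a point parametrizing a chain of two free curves of smaller degree, use that such a point is smooth in $\overline{M}_{0,0}(X)$, and conclude with Lemma~\ref{lemm: chains}. But there is a genuine gap at exactly the step you flag, and the way you propose to fill it does not work. You write that ``comparing $\dim W_p$ with the dimension of the sublocus \dots one finds that $\overline{W}$ meets the boundary divisor $\Delta_{1,e-1}$,'' and later that Theorem~\ref{theo:threefoldfinitebadpoints} ``forces the boundary to be a genuine divisor in $W$.'' A dimension comparison, or an upper bound on $\dim W_p$, can never prove that $\overline{W}\cap\Delta$ is \emph{nonempty}; it only constrains what the intersection can look like if it is nonempty. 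The missing ingredient is Mori's Bend and Break, which is precisely where the threshold $\dim(X)+1$ in the statement comes from: for $e\ge 3$ the anticanonical degree $2e$ exceeds $\dim(X)+1=4$, so a free curve in $W$ through two general fixed points still moves in a positive-dimensional family and therefore degenerates to a stable map with reducible or non-reduced domain. Without this (or some substitute), your induction never gets off the ground, since nothing guarantees $W$ contains any reducible curve at all.

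Your second difficulty --- pinning down that the degeneration lies in $\Delta_{1,e-1}$ with a free line as one component --- is real but also unnecessary. Bend and Break only produces a point of \emph{some} boundary stratum. The paper's proof sidesteps your issue as follows: the boundary meets $\overline{W}$ in a locus of pure codimension one, i.e.\ dimension $2e-1$; by Corollary~\ref{coro:free} every component of every $\overline{M}_{0,0}(X,\alpha)$ has the expected dimension and generically parametrizes free curves, so the only boundary strata of dimension $2e-1$ are main components of $M_1'\times_X M_2'$ with both $M_i$ free (three or more components, non-main components, and non-free configurations all have codimension $\ge 2$). Hence $W$ contains a chain of two free curves of some bidegrees $d_1+d_2=e$, and one then applies the induction hypothesis to \emph{both} pieces and Lemma~\ref{lemm: chains} to splice the resulting low-degree chains back into $W$. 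Breaking off a line specifically is only needed later, in Theorem~\ref{theo:componentsforfanothreefolds}, where it is obtained by iterating this argument together with Proposition~\ref{prop: conics}, not by hitting $\Delta_{1,e-1}$ directly.
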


\begin{proof}
By Mori's Bend and Break any free curve $C$ of anticanonical degree $> \dim(X) + 1$ can be deformed to a stable map with reducible domain.  Furthermore these deformed maps form a codimension $1$ locus of the component of the moduli space containing $C$.  By the classification of components of $\overline{M}_{0,0}(X)$, only the union of two free curves can form such a codimension $1$ locus. Now Lemma~\ref{lemm: chains} and the induction argument show our claim.
\end{proof}

\begin{theo}
\label{theo:componentsforfanothreefolds}
Let $X$ be a smooth Fano $3$-fold such that $\mathrm{Pic}(X) = \mathbb ZH$, $-K_X=2H$, and $H^3 \geq 2$.
Furthermore when $H^3 = 2$, assume that $X$ is general in its moduli.
Let $d \geq 2$. Then $\overline{M}_{0,0}(X, d)$ consists of two irreducible components:
\[
\overline{M}_{0,0}(X, d) = \mathcal R_d \cup \mathcal N_d
\]
such that a general element $(C, f) \in \mathcal R_d$ is a birational stable map from an irreducible curve and any element $(C, f) \in \mathcal N_d$ is a degree $d$ stable map to a line in $X$. 
Moreover the fiber $\mathrm{ev}_1^{-1}(x)\cap \mathcal R_d'$ is irreducible for a general $x\in X$.
\end{theo}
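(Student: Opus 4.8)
The plan is to argue by induction on $d$, adapting the scheme of \cite[Section~2]{CS09} but feeding in the structural results proved above (Corollary~\ref{coro:free}, Theorem~\ref{theo:polygrowthofmbar}, Lemma~\ref{lemm: chains}, Lemma~\ref{lemm:reorderingcomponents}, Observation~\ref{obse:irreducibility}, Theorem~\ref{theo:threefoldfinitebadpoints}) wherever \cite{CS09} used cubic-specific facts. I would prove by simultaneous induction on $d$ two assertions: (A) $\overline{M}_{0,0}(X,d)=\mathcal R_d\cup\mathcal N_d$ with $\mathcal R_d$ irreducible and $\mathcal N_d$ the closure of the locus of degree $d$ covers of lines; and (B) $\mathrm{ev}_1^{-1}(x)\cap\mathcal R_d'$ is irreducible for general $x\in X$. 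The base cases $d=1,2$ are Theorem~\ref{theo: lines} and Proposition~\ref{prop: conics}; for the $d=2$ case of (B) one extracts from the proof of Proposition~\ref{prop: conics} that the conics through a general point form an irreducible family (for instance, on the cubic threefold they are in bijection with the lines $\ell$ missing $x$ via $\ell\mapsto\overline{(X\cap\langle\ell,x\rangle)\setminus\ell}$). Finally $\mathcal N_d$ is irreducible because it is dominated by the product of the variety of lines (irreducible by Theorem~\ref{theo: lines}) with the space of degree $d$ self-maps of $\mathbb P^1$.

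First I would show that any component of $\overline{M}_{0,0}(X,d)$ other than $\mathcal N_d$ has general member birational onto its image. By Corollary~\ref{coro:free} every component $M$ generically parametrizes free curves of the expected dimension $2d$. If the general member of $M$ were reducible, $M$ would be a main component of a (possibly iterated) fibre product over $X$ of lower-degree families of free curves, which has dimension $<2d$; impossible. If the general member were an $e$-fold cover of a lower-degree curve for some $e\geq 2$, then by the inductive form of (A) the locus of such maps has dimension at most $2(d/e)+2e-2$, which is $<2d$ unless $e=d$, and $e=d$ forces a cover of a line, i.e.\ $M=\mathcal N_d$. Hence any $M\neq\mathcal N_d$ has general member an irreducible curve birational onto its image; it remains to show there is exactly one such component and that it satisfies (B).

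The heart of the argument will be the gluing induction. Let $M$ be a component with general member birational onto its image. Since that member has anticanonical degree $2d>\dim X+1$ for $d\geq 3$, Mori bend-and-break (as in the proof of Theorem~\ref{theo:polygrowthofmbar}) together with the classification of the codimension-one boundary of $M$ produces a stable map $C'\cup C''\in M$ with $C',C''$ free of $H$-degrees $d'+d''=d$. If, say, $C'$ is birational onto its image, then $C'\in\mathcal R_{d'}$ by induction; since the inductive form of (B) makes the evaluation map on $\mathcal R_{d'}'$ have irreducible general fibre, Observation~\ref{obse:irreducibility} forces the relevant fibre product $\mathcal R_{d'}'\times_X(\text{family of }C'')$ to be irreducible, so $M$ is determined. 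If instead $C'$ and $C''$ are both covers of lines, those two lines are distinct (otherwise $C'\cup C''$ covers a line, contradicting birationality of the general member of $M$); degenerating each into a chain of copies of its line and interleaving the two via Lemma~\ref{lemm:reorderingcomponents}, I would pair adjacent lines into conics, reduce $M$ to a chain of $\mathcal R_2$-blocks (plus at most one leftover line), and iterate Observation~\ref{obse:irreducibility} using the $d=2$ case of (B) to conclude irreducibility of the parameter space. In every case the unique main component so obtained smooths to curves that remain birational onto their image — the $\mathcal R$-block survives the smoothing, by \cite[II.7.6 Theorem]{Kollar} — so $M=\mathcal R_d$; in particular gluing an $\mathcal R$-family to an $\mathcal N$-family produces curves in $\mathcal R_d$, not $\mathcal N_d$. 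Running the same analysis over a fixed general $x\in X$, using Lemma~\ref{lemm: familiesthroughpoints} to keep the marked point and the free blocks general enough that the degeneration stays inside $\mathrm{ev}_1^{-1}(x)\cap\mathcal R_d'$ and Theorem~\ref{theo:threefoldfinitebadpoints} to steer around the finite bad locus, would show that $\mathrm{ev}_1^{-1}(x)\cap\mathcal R_d'$ is connected, hence irreducible since it is smooth of dimension $2d-2$ by generic smoothness; this is (B).

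The hard part will be the bookkeeping in the gluing step: one must verify that every fibre product of standard blocks that can occur has a \emph{unique} main component, and the only available tool, Observation~\ref{obse:irreducibility}, requires one of the two glued families to have irreducible $\mathrm{ev}_1$-fibre. That is exactly assertion (B), which is why it must be carried along the induction rather than established at the end, and why one must arrange each degeneration so that at least one block is an $\mathcal R$-family (including $\mathcal R_2$) — which is possible precisely because a component with birational general member cannot degenerate entirely into copies of a single line. A secondary subtlety, absorbed into this uniqueness argument, is that the universal curve over a hypothetical second birational component could a priori factor through the universal family of lines $\mathcal U\to X$, which is an $a$-cover by Lemma~\ref{lemm:abclassification}; but this is ruled out by the same unique-main-component conclusion applied to the chains of lines inside such a component.
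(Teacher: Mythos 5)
Your induction for the component count is essentially the paper's: degenerate a component $M\neq\mathcal N_d$ to a chain of free low-degree curves via Theorem~\ref{theo:polygrowthofmbar}, refine to a chain of lines via Proposition~\ref{prop: conics} and Lemma~\ref{lemm: chains}, use that such chains are smooth points of $\overline{M}_{0,0}(X)$, and pin $M$ down as the unique component containing the (unique) main component of a fiber product such as $\mathcal R_1'\times_X\mathcal R_{d-1}'$. Your case analysis (interleaving two multiple covers of distinct lines into $\mathcal R_2$-blocks) is more elaborate than the paper's, which simply observes that a chain of $d$ free lines with reducible image already lies on $\Delta_{1,d-1}$, but this part of your plan is sound.

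The genuine gap is in assertion (B), the irreducibility of $\mathrm{ev}_1^{-1}(x)\cap\mathcal R_d'$, which you propose to carry along the induction by a degeneration argument \emph{inside the fiber}. Two steps there are unproved and nontrivial: (i) that every component of the fiber over a general $x$ actually contains a boundary configuration of the standard type (this needs bend-and-break with the marked point held at $x$, plus control of which boundary strata can appear in codimension one \emph{of the fiber}), and (ii) that the fiber is unibranch at such a configuration. Without (ii), your concluding inference ``connected, hence irreducible since it is smooth of dimension $2d-2$ by generic smoothness'' fails: generic smoothness only gives smoothness of the general fiber away from the singular locus of $\mathcal R_d'$, i.e.\ exactly away from the locus where two $(2d-2)$-dimensional components of the fiber could meet, and a connected, generically smooth, equidimensional scheme need not be irreducible. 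A chain of free curves is a smooth point of $\overline{M}_{0,1}(X,d)$, but you would additionally need surjectivity of the differential of $\mathrm{ev}_1$ there to conclude smoothness of the fiber. The paper avoids all of this: it proves (B) once and for all, non-inductively, for \emph{any} dominant component with birational general member, by showing the universal family $Y\to X$ has connected Stein factorization --- a nontrivial factorization would, by Lemma~\ref{lemm:abclassification}, factor through the universal family of lines $\mathcal U\to X$, forcing the curves to have zero intersection with the ramification divisor and hence to be multiple covers of lines, contradicting birationality. You relegate this $a$-cover mechanism to a ``secondary subtlety,'' but it is in fact the engine that makes (B) (and hence the appeal to Observation~\ref{obse:irreducibility} in the induction) work; I recommend replacing your fiberwise connectedness argument with it.
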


\begin{proof}
We denote by $\mathcal N_{d}$ the component parametrizing degree $d$ stable maps to lines.
It is clear that this is irreducible.
First let $M$ be a dominant component of $\overline{M}_{0,0}(X)$ generically parametrizing a birational stable map. We show that the fiber $\mathrm{ev}_1^{-1}(x)\cap M$ is irreducible for a general $x\in X$. Let $Y \rightarrow M$ be a smooth resolution.
We would like to show that $Y \rightarrow X$ has connected fibers.
Suppose not, i.e. the Stein factorization $Z \rightarrow X$ is nontrivial.
Then it is shown in \cite{LT16} that $Z$ factors through $\mathcal R_1'$ rationally.
This means that curves parametrized by $M$ lift to $\mathcal R_1'$ and have vanishing intersection with the ramification divisor of the morphism $\mathcal R_1' \rightarrow X$.  Therefore, these curves are multiple covers of lines which contradicts with the fact that a general curve maps birationally onto its image. Thus our assertion follows.

Now we prove our theorem using induction on $d$.
The case of $d=2$ is settled by Proposition~\ref{prop: conics}. Suppose that $d > 2$ and we assume our assertion for any $2 \leq d' < d$.  By gluing free curves of lower degree it is clear that $\overline{M}_{0,0}(X,d)$ has at least one component different from $\mathcal{N}_{d}$.  Let $M$ be one such component.
Then any general $(C, f) \in M$ is a birational stable map from an irreducible curve by Corollary \ref{coro:free}.  (A dimension count shows that multiple covers of curves can not form a component of $\overline{M}_{0,0}(X)$ unless the curves are lines.) 
Using Theorem~\ref{theo:polygrowthofmbar} we see that $M$ contains a chain of free curves of degree at most $2$.  Furthermore, by Proposition~\ref{prop: conics} each component of the parameter space of conics contains a chain of lines.  Applying Lemma~\ref{lemm: chains}, we can conclude that $M$ contains a chain $(C, f)$ of free lines of length $d$.
Note that $(C, f)$ is a smooth point of $M_{0,0}(X)$.
If the image of $f$ is a single line, then $(C, f)$ is contained in $\mathcal{N}_d$.
Since it is a smooth point, we conclude that $M = \mathcal{N}_d$ which is a contradiction.

So we may assume that $(C, f)$ has a reducible image.
This means that $(C, f)$ is a point on the image $\Delta_{1, d-1}$ of the main component of $\mathcal R_1' \times_X \mathcal R_{d-1}'$ which is unique by the induction hypothesis.
Since $(C, f)$ is a smooth point, we conclude that $M$ contains $\Delta_{1, d-1}$
and such $M$ must be unique.
Thus our assertion follows.

\end{proof}

\begin{coro}
Let $X$ be a smooth Fano $3$-fold such that $\mathrm{Pic}(X) = \mathbb ZH$, $-K_X=2H$, and $H^3 \geq 2$.
Furthermore when $H^3 = 2$, assume that $X$ is general in its moduli.  For every curve class $\alpha$ satisfying $H \cdot \alpha \geq 2$ there is a unique Manin component representing $\alpha$.
\end{coro}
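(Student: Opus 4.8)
The plan is to read off the two components of $\mathrm{Mor}(\mathbb{P}^1,X,\alpha)$ from Theorem~\ref{theo:componentsforfanothreefolds} and then check the conditions of Definition~\ref{defi:manincomponent} for each using the $a,b$-computations of Lemma~\ref{lemm:abclassification}. First I would record the numerics. With $L:=-K_X=2H$ we have $a(X,L)=1$ and $K_X+a(X,L)L\equiv 0$, so $F(X,L)=\Nef_1(X)$ is a single ray and $\kappa(K_X+a(X,L)L)=0$; thus we are in the rigid case of Section~\ref{section:rigidcase} and condition (i) of Definition~\ref{defi:manincomponent} is automatic for every component. Since $\mathrm{Pic}(X)=\mathbb{Z}H$ we have $N_1(X)_{\mathbb{Z}}=\mathbb{Z}[\ell]$ for a line $\ell$, so the classes with $H\cdot\alpha\ge 2$ are exactly $\alpha=d[\ell]$, $d\ge 2$, and by Theorem~\ref{theo:componentsforfanothreefolds} (its components both generically parametrize maps from an irreducible $\mathbb{P}^1$) the space $\mathrm{Mor}(\mathbb{P}^1,X,\alpha)$ has precisely two components: $R_d$, whose general member is birational onto an irreducible degree-$d$ curve dominating $X$, and $N_d$, whose members are degree-$d$ covers of lines.

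Next I would show $N_d$ is \emph{not} a Manin component by producing a violation of condition (iv). Let $s:\mathcal{U}\to X$ be the universal family of lines on $X$. The universal family map of $N_d$ factors rationally through $\mathcal{U}\to X$: a degree-$d$ cover $\phi:\mathbb{P}^1\to\ell$ with a point $t$ determines the point $\phi(t)$ lying on the line $\ell$, giving a rational map to $\mathcal{U}$. By Lemma~\ref{lemm:abclassification}, $\mathcal{U}\to X$ is a dominant thin morphism with $a(\mathcal{U},-s^*K_X)=a(X,-K_X)$, so (a resolution of) $\mathcal{U}$ is an $a$-cover; and as used in Lemma~\ref{lemm:abclassification}, \cite[Theorem~1.9]{LT16} shows there is no $a$-cover with $\kappa(K_Y-f^*K_X)=0$, whence $\kappa(K_{\mathcal{U}}-s^*K_X)>0$. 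Thus $N_d$ factors through a dominant thin morphism with equal $a$-value and positive adjoint Iitaka dimension, which is exactly what Definition~\ref{defi:manincomponent}(iv) forbids.

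Then I would show $R_d$ \emph{is} a Manin component, in fact that its family map $s:\mathcal{C}\to X$ does not factor rationally through \emph{any} thin morphism, so that (ii)--(iv) hold vacuously. Since the general curve in $R_d$ dominates $X$, any thin morphism $g:Z\to X$ through which $s$ factored has dense image, hence is dominant, hence (being thin) generically finite of some degree $e\ge 2$ with $\dim Z=3$, and the induced map $\mathcal{C}\dashrightarrow Z$ is dominant. But the ``moreover'' part of Theorem~\ref{theo:componentsforfanothreefolds} says the fibre of the evaluation map $s$ over a general point $x\in X$ is irreducible; on the other hand such a factorization would write this fibre as the disjoint union of the $e\ge 2$ (nonempty, for general $x$) fibres of $\mathcal{C}\dashrightarrow Z$ over the $e$ preimages of $x$ in $Z$, contradicting irreducibility. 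Hence no such $g$ exists; combined with (i), this shows $R_d$ is a Manin component, and therefore it is the unique one representing $\alpha$.

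The only real care needed is bookkeeping: one must pass between $\overline{M}_{0,0}(X,d)$, where Theorem~\ref{theo:componentsforfanothreefolds} is stated, and $\mathrm{Mor}(\mathbb{P}^1,X,\alpha)$, where Manin components are defined, using that the reparametrization map $\mathrm{Mor}(\mathbb{P}^1,X,\alpha)\to\overline{M}_{0,0}(X,d)$ is dominant with irreducible (roughly $\mathrm{PGL}_2$) general fibres, so that irreducibility of a general evaluation fibre transfers between the two settings; and in the factorization argument one must note that $\mathcal{C}\dashrightarrow Z$ is dominant and that, for a general target point $x$, all $e$ of its preimages in $Z$ lie over the locus where the fibres of $\mathcal{C}\dashrightarrow Z$ are nonempty. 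Neither point is serious, so the main content is really the input Theorem~\ref{theo:componentsforfanothreefolds} together with Lemma~\ref{lemm:abclassification}.
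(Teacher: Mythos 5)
Your proof is correct and follows exactly the route the paper intends: the corollary is stated without proof as an immediate consequence of Theorem~\ref{theo:componentsforfanothreefolds} and Lemma~\ref{lemm:abclassification}, and your verification that $\mathcal N_d$ fails condition (iv) via the universal family of lines while $\mathcal R_d$ factors through no thin morphism (using the irreducibility of general evaluation fibres) is precisely the intended argument, already implicit in the connected-fibres step of the proof of Theorem~\ref{theo:componentsforfanothreefolds}.
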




\bibliographystyle{alpha}
\bibliography{balancedIV}

\def\cprime{$'$}
\begin{thebibliography}{BDPP13}

\bibitem[And13]{Andreatta13}
M.~Andreatta.
\newblock Minimal model program with scaling and adjunction theory.
\newblock {\em Internat. J. Math.}, 24(2):1350007, 13, 2013.

\bibitem[BDPP13]{BDPP}
S{\'e}bastien Boucksom, Jean-Pierre Demailly, Mihai Paun, and Thomas Peternell.
\newblock The pseudo-effective cone of a compact {K}\"ahler manifold and
  varieties of negative {K}odaira dimension.
\newblock {\em J. Algebraic Geom.}, 22(2):201--248, 2013.

\bibitem[Bir62]{Bir61}
B.~J. Birch.
\newblock Forms in many variables.
\newblock {\em Proc. Roy. Soc. Ser. A}, 265:245--263, 1961/1962.

\bibitem[Bir16]{birkar16b}
Caucher Birkar.
\newblock Singularities of linear systems and boundedness of {F}ano varieties,
  2016.
\newblock arXiv:1609.05543 [math.AG].

\bibitem[BK13]{BK13}
Roya Beheshti and N.~Mohan Kumar.
\newblock Spaces of rational curves on complete intersections.
\newblock {\em Compos. Math.}, 149(6):1041--1060, 2013.

\bibitem[BL15]{BL16}
T.~D. Browning and D.~Loughran.
\newblock Varieties with too many rational points.
\newblock {\em Math. Z.}, 2015.
\newblock to appear.

\bibitem[BM90]{BM}
V.~V. Batyrev and Yu.~I. Manin.
\newblock Sur le nombre des points rationnels de hauteur born\'e des
  vari\'et\'es alg\'ebriques.
\newblock {\em Math. Ann.}, 286(1-3):27--43, 1990.

\bibitem[BM96]{BM96}
K.~Behrend and Yu. Manin.
\newblock Stacks of stable maps and {G}romov-{W}itten invariants.
\newblock {\em Duke Math. J.}, 85(1):1--60, 1996.

\bibitem[Bou11]{Bou}
D.~Bourqui.
\newblock Asymptotic behaviour of rational curves.
\newblock arXiv:1107.3824, 2011.

\bibitem[Bou12]{Bou12}
David Bourqui.
\newblock Moduli spaces of curves and {C}ox rings.
\newblock {\em Michigan Math. J.}, 61(3):593--613, 2012.

\bibitem[Bou13]{Bourqui13}
David Bourqui.
\newblock Exemples de comptages de courbes sur les surfaces.
\newblock {\em Math. Ann.}, 357(4):1291--1327, 2013.

\bibitem[Bou16]{Bou16}
David Bourqui.
\newblock Algebraic points, non-anticanonical heights and the {S}everi problem
  on toric varieties.
\newblock {\em Proc. Lond. Math. Soc. (3)}, 113(4):474--514, 2016.

\bibitem[BT96]{BT-cubic}
V.~V. Batyrev and Y.~Tschinkel.
\newblock Rational points on some {F}ano cubic bundles.
\newblock {\em C. R. Acad. Sci. Paris S\'er. I Math.}, 323(1):41--46, 1996.

\bibitem[BV16]{BV16}
T.D. Browning and P.~Vishe.
\newblock Rational curves on hypersurfaces of low degree, 2016.
\newblock arXiv:1611.00553 [math.AG].

\bibitem[Cam92]{Campana92}
F.~Campana.
\newblock Connexit\'e rationnelle des vari\'et\'es de {F}ano.
\newblock {\em Ann. Sci. \'Ecole Norm. Sup. (4)}, 25(5):539--545, 1992.

\bibitem[Cas04]{Cas04}
Ana-Maria Castravet.
\newblock Rational families of vector bundles on curves.
\newblock {\em Internat. J. Math.}, 15(1):13--45, 2004.

\bibitem[CPW16]{CPW16}
I.~Cheltsov, J.~Park, and J.~Won.
\newblock Cylinders in del {P}ezzo surfaces.
\newblock {\em Int. Math. Res. Not. IMRN}, 2016.
\newblock online publication.

\bibitem[CS09]{CS09}
Izzet Coskun and Jason Starr.
\newblock Rational curves on smooth cubic hypersurfaces.
\newblock {\em Int. Math. Res. Not. IMRN}, (24):4626--4641, 2009.

\bibitem[EV05]{EV05}
Jordan~S. Ellenberg and Akshay Venkatesh.
\newblock Counting extensions of function fields with bounded discriminant and
  specified {G}alois group.
\newblock In {\em Geometric methods in algebra and number theory}, volume 235
  of {\em Progr. Math.}, pages 151--168. Birkh\"{a}user Boston, Boston, MA,
  2005.

\bibitem[Fuj89]{Fujita89}
T.~Fujita.
\newblock Remarks on quasi-polarized varieties.
\newblock {\em Nagoya Math. J.}, 115:105--123, 1989.

\bibitem[Gro95]{Grothendieck95}
Alexander Grothendieck.
\newblock Techniques de construction et th\'eor\`emes d'existence en
  g\'eom\'etrie alg\'ebrique. {IV}. {L}es sch\'emas de {H}ilbert.
\newblock In {\em S\'eminaire {B}ourbaki, {V}ol.\ 6}, pages Exp.\ No.\ 221,
  249--276. Soc. Math. France, Paris, 1995.

\bibitem[Gue99]{guerra99}
Lucio Guerra.
\newblock Complexity of {C}how varieties and number of morphisms on surfaces of
  general type.
\newblock {\em Manuscripta Math.}, 98(1):1--8, 1999.

\bibitem[H\"or10]{Horing10}
A.~H\"oring.
\newblock The sectional genus of quasi-polarised varieties.
\newblock {\em Arch. Math. (Basel)}, 95(2):125--133, 2010.

\bibitem[HJ17]{HJ16}
Christopher Hacon and Chen Jiang.
\newblock On {F}ujita invariants of subvarieties of a uniruled variety.
\newblock {\em Algebr. Geom.}, 4(3):304--310, 2017.

\bibitem[HRS04]{HRS04}
Joe Harris, Mike Roth, and Jason Starr.
\newblock Rational curves on hypersurfaces of low degree.
\newblock {\em J. Reine Angew. Math.}, 571:73--106, 2004.

\bibitem[HTT15]{HTT15}
Brendan Hassett, Sho Tanimoto, and Yuri Tschinkel.
\newblock Balanced line bundles and equivariant compactifications of
  homogeneous spaces.
\newblock {\em Int. Math. Res. Not. IMRN}, (15):6375--6410, 2015.

\bibitem[Hwa05]{hwang05}
Jun-Muk Hwang.
\newblock A bound on the number of curves of a given degree through a general
  point of a projective variety.
\newblock {\em Compos. Math.}, 141(3):703--712, 2005.

\bibitem[IP99]{IP99}
V.~A. Iskovskikh and Yu.~G. Prokhorov.
\newblock Fano varieties.
\newblock In {\em Algebraic geometry, {V}}, volume~47 of {\em Encyclopaedia
  Math. Sci.}, pages 1--247. Springer, Berlin, 1999.

\bibitem[Isk79]{Isk79}
V.~A. Iskovskih.
\newblock Anticanonical models of three-dimensional algebraic varieties.
\newblock In {\em Current problems in mathematics, {V}ol. 12 ({R}ussian)},
  pages 59--157, 239 (loose errata). VINITI, Moscow, 1979.

\bibitem[KM99]{KM99}
Se{\'a}n Keel and James McKernan.
\newblock Rational curves on quasi-projective surfaces.
\newblock {\em Mem. Amer. Math. Soc.}, 140(669):viii+153, 1999.

\bibitem[KMM92]{KMM92}
J\'anos Koll\'ar, Yoichi Miyaoka, and Shigefumi Mori.
\newblock Rational connectedness and boundedness of {F}ano manifolds.
\newblock {\em J. Differential Geom.}, 36(3):765--779, 1992.

\bibitem[Kol96]{Kollar}
J{\'a}nos Koll{\'a}r.
\newblock {\em Rational curves on algebraic varieties}, volume~32 of {\em
  Ergebnisse der Mathematik und ihrer Grenzgebiete. 3. Folge. A Series of
  Modern Surveys in Mathematics [Results in Mathematics and Related Areas. 3rd
  Series. A Series of Modern Surveys in Mathematics]}.
\newblock Springer-Verlag, Berlin, 1996.

\bibitem[Kol15]{Kollar15}
J{\'a}nos Koll{\'a}r.
\newblock The {L}efschetz property for families of curves.
\newblock In {\em Rational points, rational curves, and entire holomorphic
  curves on projective varieties}, volume 654 of {\em Contemp. Math.}, pages
  143--154. Amer. Math. Soc., Providence, RI, 2015.

\bibitem[KP01]{KP01}
B.~Kim and R.~Pandharipande.
\newblock The connectedness of the moduli space of maps to homogeneous spaces.
\newblock In {\em Symplectic geometry and mirror symmetry ({S}eoul, 2000)},
  pages 187--201. World Sci. Publ., River Edge, NJ, 2001.

\bibitem[LT17]{LT16}
B.~Lehmann and S.~Tanimoto.
\newblock On the geometry of thin exceptional sets in {M}anin's conjecture.
\newblock {\em Duke Math. J.}, 166(15):2815--2869, 2017.

\bibitem[LT18]{LT18}
B.~Lehmann and S.~Tanimoto.
\newblock On exceptional sets in {M}anin's {C}onjecture.
\newblock submitted, 2018.

\bibitem[LTT18]{LTT14}
B.~Lehmann, S.~Tanimoto, and Y.~Tschinkel.
\newblock Balanced line bundles on {F}ano varieties.
\newblock {\em {\it J. Reine Angew. Math.}}, 743:91--131, 2018.

\bibitem[Man95]{Manin95}
Yu.~I. Manin.
\newblock Problems on rational points and rational curves on algebraic
  varieties.
\newblock In {\em Surveys in differential geometry, {V}ol.\ {II} ({C}ambridge,
  {MA}, 1993)}, pages 214--245. Int. Press, Cambridge, MA, 1995.

\bibitem[Mor84]{Mori84}
Shigefumi Mori.
\newblock Cone of curves, and {F}ano {$3$}-folds.
\newblock In {\em Proceedings of the {I}nternational {C}ongress of
  {M}athematicians, {V}ol. 1, 2 ({W}arsaw, 1983)}, pages 747--752. PWN, Warsaw,
  1984.

\bibitem[MZ88]{MZ88}
M.~Miyanishi and D.-Q. Zhang.
\newblock Gorenstein log del {P}ezzo surfaces of rank one.
\newblock {\em J. Algebra}, 118(1):63--84, 1988.

\bibitem[Pey03]{Peyre03}
Emmanuel Peyre.
\newblock Points de hauteur born\'ee, topologie ad\'elique et mesures de
  {T}amagawa.
\newblock {\em J. Th\'eor. Nombres Bordeaux}, 15(1):319--349, 2003.

\bibitem[Rud14]{LeRudulier}
C{\'e}cile~Le Rudulier.
\newblock Points alg{\'e}briques de hauteur born{\'e}e sur une surface, 2014.
\newblock http://cecile.lerudulier.fr/Articles/surfaces.pdf.

\bibitem[RY16]{RY16}
Eric Riedl and David Yang.
\newblock Kontsevich spaces of rational curves on {F}ano hypersurfaces, 2016.
\newblock arXiv:1409.3802 [math.AG], to appear in J. Reine Agnew. Math.

\bibitem[San14]{San13}
G.~Sanna.
\newblock {\em Rational curves and instantons on the {F}ano threefold $Y_5$}.
\newblock PhD thesis, Scuola Internazionale di Studi Superiori Avanzati,
  2013/2014.

\bibitem[Tes05]{Testa05}
D.~Testa.
\newblock {\em The Severi problem for rational curves on del Pezzo surfaces}.
\newblock PhD thesis, Massachusetts Institute of Technology, 2005.
\newblock https://arxiv.org/abs/math/0609355.

\bibitem[Tes09]{Testa09}
Damiano Testa.
\newblock The irreducibility of the spaces of rational curves on del {P}ezzo
  surfaces.
\newblock {\em J. Algebraic Geom.}, 18(1):37--61, 2009.

\bibitem[Tho98]{Thomsen98}
Jesper~Funch Thomsen.
\newblock Irreducibility of {$\overline{M}_{0,n}(G/P,\beta)$}.
\newblock {\em Internat. J. Math.}, 9(3):367--376, 1998.

\bibitem[Tsc09]{Tsc09}
Yuri Tschinkel.
\newblock Algebraic varieties with many rational points.
\newblock In {\em Arithmetic geometry}, volume~8 of {\em Clay Math. Proc.},
  pages 243--334. Amer. Math. Soc., Providence, RI, 2009.

\bibitem[TZ14]{TZ14}
Zhiyu Tian and Hong~R. Zong.
\newblock One-cycles on rationally connected varieties.
\newblock {\em Compos. Math.}, 150(3):396--408, 2014.

\end{thebibliography}

\end{document}